\documentclass[11pt,reqno]{amsart}
\usepackage{microtype}

\usepackage[dvipsnames]{xcolor}
\usepackage
[colorlinks=true,linkcolor=Maroon,citecolor=OliveGreen,backref=page]
{hyperref}
\usepackage{amssymb}
\usepackage[shortlabels]{enumitem}
\setlist[enumerate]{label={(\arabic*)}}

\usepackage[capitalize]{cleveref}
\crefname{equation}{}{}

\usepackage[abbrev]{amsrefs}  % bibliography

\numberwithin{equation}{section}
\newtheorem{theorem}{Theorem}[section]
\newtheorem{lemma}[theorem]{Lemma}

\newtheorem{proposition}[theorem]{Proposition}
\newtheorem{corollary}[theorem]{Corollary}

\newtheorem{conjecture}{Conjecture}
\Crefname{conjecture}{Conjecture}{Conjectures}

\newtheorem*{theorem*}{Theorem}
\newtheorem*{question}{Question}

\theoremstyle{definition}

\theoremstyle{remark}
\newtheorem{remark}[theorem]{Remark}

\def\F{\mathbf{F}} % field
\def\T{\mathcal{T}} % tree
\def\B{\mathcal{B}} % block system

\def\G{\mathrm{G}} % spinal groups of type G

\newcommand\opr[1]{\operatorname{#1}}
\def\Sym{\opr{Sym}}
\def\Alt{\opr{Alt}}
\def\GL{\opr{GL}}
\def\PGL{\opr{PGL}}
\def\SL{\opr{SL}}

\def\Sp{\opr{Sp}}
\def\AGL{\opr{AGL}}
\def\Aut{\opr{Aut}}
\def\Out{\opr{Out}}

\def\diam{\opr{diam}}

\def\nsgp{\trianglelefteq}

\def\soc{\opr{soc}}
\def\sol{\opr{sol}}

\def\sublen{\lambda}

\def\nacomp{\mathcal{N}}

\def\mucf{\mu_{\opr{cf}}}
\def\muab{\mu_{\opr{ab}}}
\def\muna{\mu_{\opr{na}}}

\def\eps{\varepsilon}
\def\len{\ell}

\def\Gap{\opr{Gap}}

\newcommand\br[1]{{\left(#1\right)}}

\newcommand\gen[1]{\left\langle#1\right\rangle}
\newcommand\floor[1]{\left\lfloor#1\right\rfloor}
\newcommand\ceil[1]{\left\lceil#1\right\rceil}

\renewcommand{\subset}{\subseteq} % up for discussion

\usepackage{todonotes}

\numberwithin{equation}{section}

\author{Sean Eberhard}
\author{Elena Maini}
\author{Luca Sabatini}
\author{Gareth Tracey}

\address{\parbox{\linewidth}{Mathematics Institute, Zeeman Building, University of Warwick, UK \vspace{0.1cm}}}
\email{firstname.lastname@warwick.ac.uk}

\subjclass[2020]{20F69, 20D60, 05C25}
\thanks{S.E.\ and L.S.\ are supported by the Royal Society through a University Research Fellowship held by S.E.\ (URF\textbackslash R1\textbackslash 221185). E.M.\ is supported by the Warwick Mathematics Institute Centre for Doctoral Training, and gratefully acknowledges funding by the Swinnerton-Dyer scholarship.}

\begin{document}
\title[Diameter bounds for arbitrary finite groups]{Diameter bounds for arbitrary finite groups and applications}

\begin{abstract}
    We prove a strong general-purpose bound for the diameter of a finite group depending only on the diameters of its composition factors and the maximal exponent of a normal abelian section.
    There are a number of notable applications:
    \begin{enumerate}
        \item if $G$ is a finite soluble group of exponent $e$, $\diam(G) \ll e (\log |G|)^8$,
        \item anabelian groups with bounded-rank composition factors have polylogarithmic diameter,
        \item transitive soluble subgroups of $S_n$ have diameter $\ll n^5$, and
        \item Grigorchuk's gap conjecture holds for any finitely generated group acting faithfully on a bounded-degree rooted tree.
    \end{enumerate}
    Additionally, conditional on Babai's conjecture,
    \begin{enumerate}[resume]
        \item any transitive permutation group of degree $n$ has diameter bounded by a polynomial in $n$ (a folkloric conjecture), and
        \item Grigorchuk's gap conjecture holds for residually finite groups, and thus the conjecture reduces to the simple case.
    \end{enumerate}
\end{abstract}

\maketitle
%\tableofcontents

\section{Introduction}
    If $G$ is a group and $X \subseteq G$ (usually a generating set), we write $\len_X$ for the length function with respect to $X$, i.e., if $g \in G$ then $\len_X(g)$ is the length of the minimal representation of $g$ as a product of elements of $X \cup X^{-1}$,
    or $\infty$ if none exists.
    For a subset $S \subseteq G$ we write $\len_X(S) = \sup_{g \in S} \len_X(g)$.
    The \emph{diameter} of a finite group $G$ with respect to $X$ is $\diam(G, X) = \len_X(G)$, and we define
    \[
        \diam(G) = \max \{\diam(G, X) : X~\text{a generating set for}~G\} .
    \]

    The most important conjectures about diameters of finite groups are the following two.
    The first is folkloric, but seems to have first taken definite form in the early 1980s: it is implicit in Driscoll--Furst~\cite{DF83}, and explicit in Kornhauser--Miller--Spirakis~\cite{KMS84} and McKenzie~\cite{McK84}.
    The second is universally attributed to Babai, even though it first appeared in a joint paper with Seress~\cite{BS92}.

    \begin{conjecture}[folklore]
        \label[conjecture]{conj:folk}
        If $G \le S_n$ is transitive then $\diam(G) \ll n^{O(1)}$.
    \end{conjecture}

    \begin{conjecture}[Babai]
        \label[conjecture]{conj:babai}
        If $G$ is a nonabelian finite simple group then $\diam(G) \ll (\log |G|)^{O(1)}$.
    \end{conjecture}

    Here and throughout the paper we are using standard big-$O$ notation, as well as the Vinogradov notation $A \ll B$, which means $A \le O(B)$.

    There are plenty of examples of generating sets for $A_n$ or $S_n$ giving quadratic diameter, and there are no known transitive permutation groups having a Cayley graph with greater than quadratic diameter.
    In both cases one may conjecture that the optimal exponent is $2$.

    In the direction of \Cref{conj:folk}, Helfgott--Seress~\cite{HS14} proved that for any transitive group $G \le S_n$,
    \begin{equation}
        \label{eq:helfgott--seress}
        \diam(G) \le n^{O((\log n)^3 \log \log n)}.
    \end{equation}
    This is the best known bound for $\diam(A_n)$, so \Cref{conj:babai} remains open in this case, as for the other simple groups of unbounded rank.
    Here and throughout the paper we use the term \emph{rank} for the degree $n$ in the case of $A_n$ or the (untwisted) Lie rank in the case of a simple group of Lie type.
    The rank of sporadic groups (or any finite subcollection) is unimportant and can be taken to be $1$ by convention.
    The simple groups of rank greater than $8$ are alternating or classical.
    The best known general bound for the diameter of a simple classical group $G \le \PGL(V)$ is
    \begin{equation}
        \label{eq:HMPQ}
        \diam(G) \le |V|^{O(\log \dim V)^2},
        %        \diam(G) < q^{O(n (\log n)^2)},
    \end{equation}
    which is due to Halasi--Mar\'oti--Pyber--Qiao~\cite{HMPQ}.

    \Cref{conj:babai} has been proved in bounded rank by Helfgott, Breuillard--Green--Tao, and Pyber--Szab\'o~\cites{BGT11,pyber-szabo-2016}.
    Thus for a simple group $G$ of rank $n$ we know that
    \[
        \diam(G) \le (\log |G|)^{c(n)}
    \]
    for some function $c(n)$ of the rank.
    Recently, it was proved by Bajpai--Dona--Helfgott~\cite{BDH} that
    \[
        \diam(G) \le (\log |G|)^{O(n^4)}
    \]
    for untwisted classical groups $G$ of rank $n$ defined over $\F_q$ of characteristic $p > n$, and there is considerable hope that this bound may be extended to the excluded cases.
    At present, in full generality, or for groups defined over a small field, the best available bound for a simple classical group $G \le \PGL(V)$ is still \eqref{eq:HMPQ}.

    The results in this paper are complementary to the case of simple groups.
    We give a general polylog-type bound for the diameter of a group in terms of the diameters of its composition factors as well as the maximal exponent of a normal abelian section.
    There are several notable applications.
    Among them is the striking fact that \Cref{conj:babai} implies \Cref{conj:folk}.

    To state our results it is useful to introduce some special notation.
    Let $G$ be a finite group.
    The standard notation $\mu(G)$ indicates the minimal degree of a faithful permutation representation $G \to S_n$.
    Note that if $G$ is simple then $\mu(G)$ is just the minimal index of a proper subgroup of $G$.
    Let $\nacomp(G)$ denote the set of nonabelian composition factors of $G$.
    We define
    \begin{align*}
        \theta_1(G) &= \max \left\{\frac{\log\diam(T)}{\log \mu(T)} : T \in \nacomp(G)\right\} \cup \{1\},\\
        \theta_2(G) &= \max \left\{\frac{\log\diam(T)}{\log \log |T|} : T \in \nacomp(G)\right\} \cup \{1\}.
    \end{align*}
    The idea is that $\theta_i(G)$ measures how much Conjecture $i$ fails for the composition factors of $G$.
    In particular, Conjecture $i$ implies that $\theta_i(G)$ is absolutely bounded.
    Moreover, by the results quoted above, $\theta_i(G)$ is in any case bounded by a function of the maximal rank of a nonabelian composition factor of $G$.
    Note that $\theta_1(G) \le 2 \theta_2(G)$, since $|T| \le \mu(T)!$.

    Recall that the \emph{exponent} $\exp(G)$ of a group $G$ is the smallest integer $e$ such that $g^e = 1$ for all $g \in G$.
    Let
    \[
        \eps(G) = \max \{\exp(N / N') : N \nsgp G\}.
    \]
    Here $N' = [N,N]$ is the derived subgroup of $N$.

    \begin{theorem}[Main Theorem]
        \label[theorem]{thm:main}
        If $G$ is a finite group, then
        \[
            \diam(G) \ll \eps(G) (\log |G|)^{31 + 5\theta_1(G)} \max\{\diam(T) : T \in \nacomp(G)\}\cup\{1\}.
        \]
    \end{theorem}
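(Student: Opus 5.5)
The strategy is induction along a chief series of $G$, using Schreier generators to pass from $G$ to normal subgroups. The overhead of each step has to stay polynomial in $\log|G|$, so the chief series cannot be walked one factor at a time with multiplicative losses. Instead we group the chief factors into $O(\log|G|)$ layers; more precisely, we use a bounded number of ``levels'', each with polylog cost.

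The basic lemma is the following. Let $N \nsgp G$ and let $X$ generate $G$. Then $N$ is generated by a set $Y$ of Schreier-type elements, each of $X$-length at most $2\diam(G/N)+1$. Moreover, $\len_X(N) \le \len_Y(N)\,(2\diam(G/N,\bar X)+1)$, and then $\diam(G,X) \le \diam(G/N) + \len_X(N)$. Iterating this naively multiplies the diameters, so it must be avoided.

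To get additive rather than multiplicative behaviour, we use the standard trick. We first show that a bounded-size normal-ish subset of $N$ generating $N$ \emph{as a normal subgroup} has short length. We then show that $N$ has bounded diameter with respect to $G$-conjugates of a few elements; such conjugates cost only $O(\diam(G/N))$ extra. The plan then splits into two cases for a chief factor $M/K$.

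\emph{Abelian case.} $M/K$ is an irreducible $\F_p G$-module $V$. Pick any nonzero $v \in V$ obtained as a short word; for example, take a short word in $X$ lying in $M\setminus K$, which exists by pigeonhole over the ball of radius $\diam(G/M)+1$. Its conjugates by the short words for $G/C_G(V)$ span $V$ in at most $\dim V \le \log|G|$ steps. Each element of $V$ is then a sum of at most $\dim V$ basis vectors times coefficients $<p$, which costs $p$. Here $p$ is bounded by $\eps(G)$ through the normal subgroup $M$. Running through the whole soluble radical layer at once, with $N/N'$ handled in one go, is what produces the factor $\eps(G)$ instead of a product of primes.

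\emph{Nonabelian case.} $M/K \cong T^k$. Conjugation gives a transitive action of $G$ on the $k$ factors, with $k \le \log|G|$, and we find an element projecting nontrivially to one factor. In $T$, commutators with conjugates produce a generating set, and $\diam(T)$ then covers $T$. The orbit of the factor covers $T^k$ with cost $k\cdot \diam(T)$, plus the diameter of the transitive action. The nontrivial point is that the elements we obtain in $M$ are only known modulo $K$, and to generate $T$ we need roughly $\mu(T)^{O(1)}$-length words. This is where $\theta_1$ enters: we bound $\diam(T) \le \mu(T)^{\theta_1}$, and $\mu(T) \le \log|G|$-polynomially, which gives the factor $(\log|G|)^{5\theta_1}$.

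The main obstacle is avoiding the multiplicative blowup across the $\sim\log|G|$ chief factors. The fix I would try first is to work with a fixed set $B$, a ball of radius $r=\diam(G/N)\cdot\mathrm{poly}\log$. We argue that $B$ grows: for each chief factor, $B\cdot B\cdots$ (polylog many copies) contains a full coset transversal of the next layer. This gives $\diam(G)\le \sum_i(\text{cost of layer } i)$, with each layer costing at most $\eps(G)\,(\log|G|)^{O(1)}\max\diam(T)$. The constant $31$ then comes from carefully counting the polylog factors: layers of a Fitting/socle-type decomposition, of which there are $O(\log|G|)$, transitivity diameters, and dimension bounds. I would not attempt to optimise these; I would only ensure that each is a fixed power.
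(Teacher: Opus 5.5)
\textbf{There is a genuine gap: the plan does not escape the multiplicative loss it identifies.}

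In your abelian case you obtain the first nonzero vector of $M/K$ from a word of length about $2\diam(G/M)+1$, by pigeonhole, i.e.\ via Schreier. Then $\len_X(M/K)$ is of order $\diam(G/M)\cdot p\dim V$, so $\diam(G/K)\gtrsim(1+2p\dim V)\diam(G/M)$. Over up to $\log_2|G|$ abelian chief factors this compounds to a bound exponential in the number of chief factors, i.e.\ $|G|^{O(1)}$, which is no better than trivial. The same defect sits in your remark that $G$-conjugates ``cost only $O(\diam(G/N))$ extra''. It also sits in the closing idea of a ball of radius $\diam(G/N)\cdot\mathrm{poly}\log$ that ``grows'', for which you give no argument.

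Three ingredients that make the paper's bound additive are missing:
\begin{enumerate}
\item The generalized Milnor lemma (\Cref{lem:generalized-Milnor}): if $N=\gen{Y^G}$, then $N$ is generated by conjugates $y^h$ with $\len_X(h)\le\lambda(N)\le\log_2|N|$, independent of $\diam(G/N)$.
\item Consequently, generators of $N'$ come from those of $N$ via commutators at cost $4\len_X(Y)+2\lambda(N')$. Descending the derived series therefore costs $4^L\log_2|G|$, which is polylogarithmic only because $L=O(\log\log|G|)$ (Glasby).
\item Each whole abelian layer $G^{(i)}/G^{(i+1)}$ is covered in one step using $\diam(A)\le\exp(A)\log_2|A|$ for every generating set, not chief factor by chief factor.
\end{enumerate}

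\textbf{The nonabelian case lacks its central step and misplaces $\theta_1$.} You say you ``find an element projecting nontrivially to one factor'', but the whole difficulty is finding a nontrivial socle element of polylogarithmic length. The paper does this for a monolithic group with socle $T^n$ by induction on $n$:
\begin{enumerate}
\item Pass to a maximal intransitive normal subgroup $M$, so that $G/M$ is primitive of degree $s\le n\le\log|G|$.
\item Use $\diam(G/M)<\tfrac14 s^{10+5\theta_1}$ (\Cref{thm:primitive}) and Schreier to get generators of $M$ of length $<s^{10+5\theta_1}$. The primitive-group bound comes from iterated Schreier together with the generalized P\'alfy--Wolf bound $\mucf(G/M)<s^5$.
\item Combine the inductively found short elements in the factors $M/C_M(N_i)$ by Wilson's commutator argument (\Cref{lem:wilson-commutator-argument}), which uses that each $N_i$ is self-centralizing.
\end{enumerate}
So $(\log|G|)^{5\theta_1}$ comes from the group permuting the simple factors, not from $T$.

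Your claim that $\mu(T)$ is polynomial in $\log|G|$ is false: for $T=\mathrm{PSL}_2(q)$, $\mu(T)$ is about $q$ while $\log|T|$ is about $3\log q$. In the theorem, $\diam(T)$ enters only through the linear factor $\max\diam(T)$.

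Two further steps are uncontrolled. Bounding ``the diameter of the transitive action'' on the $k$ factors polynomially in $k$ is itself an instance of the folklore conjecture, and your plan gives no way to do it. Covering $T^k$ from an arbitrary generating set needs a product bound such as Dona's $\diam(T^k)\ll k^3 r\diam(T)$, not $k\diam(T)$.

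Finally, the paper interleaves the two cases, which your plan does not do:
\begin{enumerate}
\item Take a short $g$ with nontrivial image in $\soc(G/\sol(G))$ and set $N=\gen{g^G}$.
\item Get generators of $N$ from the Milnor lemma.
\item Apply Schreier only once, between $\sol(N)$ and $N/\sol(N)$. Both have diameter polylogarithmic up to the factors $\eps(G)$ and $\max\diam(T)$.
\item Repeat this additive step at most $\log|G|$ times, then finish with the soluble quotient.
\end{enumerate}
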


    We list a number of corollaries and subresults.
    To begin with, if $G$ is soluble then $\nacomp(G) = \emptyset$, so the theorem asserts that
    \[
        \diam(G) \ll \eps(G) (\log |G|)^{O(1)},
    \]
    i.e., soluble groups with controlled exponent have polylogarithmic diameter.
    In fact this is not just an important special case but also a crucial intermediate result, and we actually obtain the following explicit bounds.

    \begin{theorem}
        \label[theorem]{thm:main-soluble}
        Let $G$ be a finite soluble group. Let $L$ be the derived length and let
        \[
            \eps_0 = \max\{\exp(G^{(i)} / G^{(i+1)}) : 0 \le i < L\} \le \exp(G).
        \]
        Then
        \[
            \diam(G)
            \le \eps_0 4^{L-1} (\log_2|G|)^2
            \le \eps_0 (4 \log_2 |G|)^8.
        \]
    \end{theorem}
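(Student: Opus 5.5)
We argue by induction on the derived length $L$, with the inductive statement that $\diam(G, X) \le \eps_0 4^{L-1} (\log_2|G|)^2$ for every generating set $X$. The final inequality then follows from $L \le \log_2|G|$ and a crude estimate. We need $4^{L-1}(\log_2|G|)^2 \le (4\log_2|G|)^8$. If $L$ is small this is clear. If $L$ is large we instead use the known bound that the derived length of a soluble group is $O(\log\log|G|)$; specifically, for soluble subgroups of $\GL$ we have $L \ll \log \log |G|$, so that $4^{L}$ is polylogarithmic. I would aim for a bound of the form $4^{L} \le (4\log_2|G|)^{6}$, obtained from a Newman-type estimate $L \le 3\log_2\log_2|G| + O(1)$.

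The key step is the base and inductive mechanism: for $A = G^{(L-1)}$ abelian and normal, we bound $\diam(G,X)$ in terms of $\diam(G/A)$ and the cost of reaching all of $A$.

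First, let $\bar X$ be the image of $X$ in $G/A$. By induction, every element of $G/A$ is a word of length at most $D' = \eps_0 4^{L-2}(\log_2|G/A|)^2$. Taking Schreier generators, $A$ is generated as a normal subgroup, and in fact as a subgroup, by elements $a_i$ of length at most $2D'+1$. Using a ``greedy chain'' argument, we choose elements $a_1, a_2, \dots$ of $A$ such that each new $a_i$ enlarges the $G$-module they generate. This gives at most $\log_2|A|$ module generators, each of $X$-length at most $2D'+1$.

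Second, we write an arbitrary element of $A$ as $\prod_i \prod_j (a_i^{g_{ij}})^{c_{ij}}$ with $0 \le c_{ij} < \eps_0$. Here the $g_{ij}$ range over short words, and we build a chain of $G$-submodules growing at each step. Each step either doubles the size of the current module or stabilises it. This mirrors the standard Babai--Seress argument for abelian normal subgroups, and it shows that $\log_2|A|$ conjugates suffice, each of length $\le 2D'+1+2D'$.

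The total cost is about $\eps_0 \log_2|A| \cdot O(D')$, plus $D'$ to reach the coset. Summing gives a bound of roughly $4 D' \log_2|A|$. This is within the target since $\log_2|G/A| \cdot \log_2|A| \le (\log_2|G|)^2/4$-type bookkeeping holds, and the factor $4$ per level yields $4^{L-1}$.

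The main obstacle is this abelian step. We must show that every element of $A$ is a product of few conjugates of the short Schreier generators, with total length linear in $\eps_0 \log|A|$ times $D'$, rather than the naive $|A|$. For this I would use the growing-chain argument: maintain a set $S$ of short words in $A$, and let $M = \langle S \rangle$. If $M$ is not $G$-invariant, then some $s^x \notin M$ with $x \in X$, and we add it, so that $|M|$ at least doubles. However, the conjugating words lengthen by one at each step, which gives length growth of $\log|A|$ additively and not multiplicatively. This additive growth is acceptable and accounts for the squared logarithm in the bound.
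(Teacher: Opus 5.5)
Your inductive step does not close, and the problem is structural rather than a matter of bookkeeping. Take $A = G^{(L-1)}$ and $D' = \eps_0 4^{L-2}(\log_2|G/A|)^2$. The Schreier generators of $A$ have $X$-length up to $2D'+1$, and a typical element of $A$ is a product of about $\eps_0\log_2|A|$ of them. Your chain argument controls how many generators or conjugates are needed, but not their length, which is already of order $D'$. So what your argument actually gives is
\[
    \len_X(G) \le D' + (2D'+1)\,\eps_0\log_2|A| \approx 2\eps_0^2\, 4^{L-2}(\log_2|G/A|)^2\log_2|A|.
\]
In general this exceeds the target $\eps_0 4^{L-1}(\log_2|G|)^2$ by a factor of order $\eps_0\log_2|A|$. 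In passing from ``$\eps_0\log_2|A|\cdot O(D')$'' to ``roughly $4D'\log_2|A|$'' you dropped a factor $\eps_0$, and even $4D'\log_2|A|$ is cubic rather than quadratic in $\log|G|$. Iterated over $L$ levels, this route yields only something like $\eps_0^{L}(\log|G|)^{O(L)}$. That is precisely the multiplicative loss of \Cref{lem:schreier}(2) that the theorem is designed to avoid. Notice that you never use that $A$ is a term of the derived series, only that it is abelian and normal with a quotient of controlled diameter.

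The missing idea is to work top-down and build generators for every $G^{(i)}$ whose $X$-length is independent of $\eps_0$ and of all quotient diameters. If $N = \gen Y \nsgp G$, the commutators $[y_1,y_2]$ with $y_1,y_2\in Y$ have length at most $4\len_X(Y)$ and normally generate $N'$. Conjugating them by all words of length at most $\sublen(N')$ gives an honest generating set; your growing-chain argument is exactly this step, \Cref{lem:generalized-Milnor}. Iterating gives $G^{(i)} = \gen{X_i}$ with $\len_X(X_i) \le 4^i\log_2|G|$ (\Cref{lem:Xi}). The extension rule along the derived series, together with \Cref{lem:diam-abelian}, then gives
\[
    \len_X(G) \le \sum_{i=0}^{L-1} \len_X(X_i)\,\diam(G^{(i)}/G^{(i+1)}) \le \eps_0 4^{L-1}\log_2|G| \sum_{i=0}^{L-1}\log_2|G^{(i)}:G^{(i+1)}|.
\]
So $\eps_0$ enters only once, additively across levels, and the logarithms telescope to $(\log_2|G|)^2$. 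For the second inequality you need the explicit form of Glasby's bound for arbitrary finite soluble groups, $L < 3\log_2\log_2|G| + 9$, which gives $4^{L-1} < 4^8(\log_2|G|)^6$. Your intermediate target $4^L \le (4\log_2|G|)^6$ would require the constant $6$ in place of $9$, and $L \le \log_2|G|$ is far too weak.
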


    Next, since $\diam(T) \le (\log |T|)^{\theta_2(G)}$ for $T \in \nacomp(G)$ and $\theta_1(G) \le 2 \theta_2(G)$, the following corollary is immediate from \Cref{thm:main}.

    \begin{corollary}
        \label[corollary]{cor:theta-2-cor}
        If $G$ is a finite group then
        \[
            \diam(G) \ll \eps(G) (\log |G|)^{31 + 11 \theta_2(G)}.
        \]
    \end{corollary}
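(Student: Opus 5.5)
The plan is to deduce \Cref{cor:theta-2-cor} from \Cref{thm:main} by bounding the last factor and the exponent $\theta_1(G)$ in terms of $\theta_2(G)$.

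Write $\theta_2 = \theta_2(G)$. If $\nacomp(G) = \emptyset$, then the maximum in \Cref{thm:main} equals $1$ and $\theta_1(G) = 1$. In that case the bound is $\eps(G)(\log|G|)^{36}$, which is at most $\eps(G)(\log|G|)^{31+11\theta_2}$ because $\theta_2 \ge 1$; we may assume $|G|$ exceeds a fixed constant so that $\log|G| \ge 1$, absorbing small groups into the implied constant. Otherwise, for each $T \in \nacomp(G)$ the definition of $\theta_2(G)$ gives $\log\diam(T) \le \theta_2 \log\log|T|$, so $\diam(T) \le (\log|T|)^{\theta_2} \le (\log|G|)^{\theta_2}$, using $|T| \le |G|$ since $T$ is a section of $G$.

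Next I bound $\theta_1(G)$. Since $T$ embeds in $S_{\mu(T)}$, we have $|T| \le \mu(T)! \le \mu(T)^{\mu(T)}$, so $\log|T| \le \mu(T)\log\mu(T)$. Hence $\log\log|T| \le \log\mu(T) + \log\log\mu(T) \le 2\log\mu(T)$, using $\mu(T) \ge 5$. It follows that $\log\diam(T)/\log\mu(T) \le 2\log\diam(T)/\log\log|T| \le 2\theta_2$, and so $\theta_1(G) \le 2\theta_2$, as noted before \Cref{thm:main}.

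Substituting both bounds into \Cref{thm:main} gives
\[
\diam(G) \ll \eps(G)(\log|G|)^{31+10\theta_2}(\log|G|)^{\theta_2} = \eps(G)(\log|G|)^{31+11\theta_2},
\]
which is the claim. There is no real obstacle here. The only minor care needed is with the logarithm base and small-group conventions, namely ensuring $\log\log|T| > 0$ and $\log|G| \ge 1$; both hold since $|T| \ge 60$, and finitely many small $G$ are absorbed into the implied constant.
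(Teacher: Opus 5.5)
Your proof is correct and takes the same route as the paper, which also deduces this immediately from \Cref{thm:main} using $\diam(T) \le (\log|T|)^{\theta_2(G)}$ and $\theta_1(G) \le 2\theta_2(G)$, the latter coming from $|T| \le \mu(T)!$. Your handling of the soluble case and of small groups fills in routine details that the paper leaves implicit.
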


    A finite group $G$ is called \emph{anabelian} if it has no abelian composition factors.

    \begin{corollary}
        \label[corollary]{cor:anabelian}
        If $G$ is anabelian then
        \[
            \diam(G) \ll (\log |G|)^{31 + 11 \theta_2(G)}.
        \]
    \end{corollary}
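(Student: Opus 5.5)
We would deduce \Cref{cor:anabelian} directly from \Cref{cor:theta-2-cor}. The only thing to check is that $\eps(G)$ is bounded by an absolute constant when $G$ is anabelian; in fact we expect $\eps(G) = 1$.

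So fix a normal subgroup $N \nsgp G$. We show that $N$ is perfect, i.e.\ $N = N'$. Take a composition series of $G$ passing through $N$; this is possible because $N$ is normal. The composition factors of $N$ then form a sub-multiset of those of $G$, so they are all nonabelian, and $N$ is itself anabelian. Now suppose $N \ne N'$. Then $N/N'$ is a nontrivial abelian group. Any composition factor of $N/N'$ is abelian, and it is also a composition factor of $N$, which is a contradiction. Hence $N/N'$ is trivial and $\exp(N/N') = 1$.

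Since $N$ was arbitrary, $\eps(G) = 1$. Substituting this into \Cref{cor:theta-2-cor} gives
\[
    \diam(G) \ll (\log |G|)^{31 + 11\theta_2(G)}
\]
with the same implied constant.

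There is one degenerate case. If $G$ is trivial, then $\log|G| = 0$ and $\diam(G) = 0$, so the statement holds trivially, or is covered by the same convention as in \Cref{thm:main}. No step is a genuine obstacle. The only point needing care is that composition factors of a normal subgroup (or of one of its quotients) are composition factors of $G$, which follows from the Jordan--Hölder theorem.
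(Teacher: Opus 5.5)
Your proof is correct and is exactly the intended deduction: every normal subgroup of an anabelian group is perfect, so $\eps(G) = 1$, and the statement is then \Cref{cor:theta-2-cor} with this value substituted. (For anabelian groups, \Cref{thm:anabelian-case} would also give the slightly sharper exponent $23 + 11\theta_2(G)$, but the stated exponent $31 + 11\theta_2(G)$ is the one that comes from your route.)
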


    Now let us turn to the case of transitive permutation groups and \Cref{conj:folk}, which is our primary motivation.

    \begin{corollary}[\Cref{conj:babai} $\implies$ \Cref{conj:folk}]
        \label[corollary]{cor:transitive}
        If $G \le S_n$ is transitive then
        \[\diam(G) \ll n^{32 + 7 \theta_1(G)}.\]
        In particular,
        \Cref{conj:folk} holds if and only if the following two special cases hold:
        \begin{enumerate}[(a)]
            \item $\diam(A_n) \le n^{O(1)}$,
            \item $\diam(G) \le |V|^{O(1)}$ for simple classical groups $G \le \PGL(V)$.
        \end{enumerate}
    \end{corollary}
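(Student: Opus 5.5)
The plan is to apply \Cref{thm:main} to a transitive $G \le S_n$ and bound each of its three factors by a power of $n$. We need three separate estimates:
\[
  \log|G| \ll n \log n, \qquad \eps(G) \le n^{O(1)}, \qquad \diam(T) \le \mu(T)^{\theta_1(G)} \le n^{\theta_1(G)} \text{ for } T \in \nacomp(G).
\]
The first is trivial from $|G| \le n!$. If $G \not\ge A_n$ one can do better using the Praeger--Saxl bound $|G| \le 4^n$, but $n\log n$ already suffices for the stated exponent. With all three estimates, \Cref{thm:main} gives
\[
  \diam(G) \ll n^{O(1)} (n\log n)^{31+5\theta_1(G)}\, n^{\theta_1(G)} .
\]
Taking $\eps(G) \le n$, this is at most $n^{32+6\theta_1(G)} (\log n)^{31+5\theta_1(G)}$. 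The surplus $\theta_1(G) \ge 1$ in the exponent $32 + 7\theta_1(G)$ absorbs both the logarithmic factors and the implied constants. If the exponent bound turns out weaker, say $\eps(G) \le n^2$, one has to redo this bookkeeping slightly, for example by using $|G| \le 4^n$ outside the case $G \ge A_n$ and treating $A_n, S_n$ directly.

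For the third estimate, $\diam(T) \le \mu(T)^{\theta_1(G)}$ holds by the definition of $\theta_1$. The remaining point is that a nonabelian composition factor $T$ of a subgroup of $S_n$ satisfies $\mu(T) \le n$. I would prove this by induction along a chain of orbits and blocks. Composition factors of $G$ are composition factors of the transitive constituents of $G$ and of its point stabilizers. Reducing to primitive components, a simple section of a primitive group of degree $m \le n$ either acts faithfully on some set of size at most $m$, or it is a section of a smaller symmetric group, where induction applies.

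The main obstacle is the bound on $\eps(G)$, i.e.\ showing that $\exp(N/N') \le n^{O(1)}$ for every $N \nsgp G$; I would aim for $\eps(G) \le n$, as in the example $C_n$. My first attempt would be induction via a block system. The $N$-orbits form a $G$-invariant block system of blocks of size $m$, and $N$ acts transitively on each block. Passing to the kernel of the action on blocks and to the induced group of degree $n/m$ gives a normal series. Exponents of abelianisations are submultiplicative along extensions only up to the commutator-subgroup mismatch, since $N \cap \prod (N^\Delta)' \ne N'$ in general. This mismatch is exactly the delicate point. To control it I would invoke known bounds on abelian quotients of permutation groups of Kov\'acs--Praeger type. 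I would also use the fact that for a primitive group of degree $m$, every abelian normal section has exponent at most $m$, since abelian normal subgroups of primitive groups are regular elementary abelian or trivial. With these I would show the exponent grows at most multiplicatively in the block sizes, giving a product $\le n$.

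Finally, for the equivalence: \Cref{conj:folk} contains (a) and (b) as special cases. Conversely, assume (a) and (b). Then $\log \diam(T) / \log \mu(T)$ is bounded for alternating and classical $T$, using $\mu(T) \ge |V|^{c}$ for classical groups. For the remaining simple groups, namely exceptional, sporadic, and bounded-rank groups, Breuillard--Green--Tao and Pyber--Szab\'o give polylogarithmic diameter. Hence $\theta_1(G) = O(1)$, and the first part yields $\diam(G) \le n^{O(1)}$.
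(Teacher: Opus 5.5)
\textbf{Gap: the bound on $\eps(G)$.} Your overall architecture is the paper's: feed $\log|G|\le n\log n$, $\eps(G)\le n$ and $\mu(T)\le n$ into \Cref{thm:main}. The equivalence part is also argued the same way. The genuine gap is the bound on $\eps(G)$. You rightly single it out as the crux, but you do not prove it, and the tools you name cannot deliver it.

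\begin{itemize}
\item Kov\'acs--Praeger-type results bound the \emph{order} of $G/G'$ (by $3^{n/3}$ for $G\le S_n$). That gives only an exponential bound on the exponent. Since $\eps(G)$ enters \Cref{thm:main} as a multiplicative factor, this destroys polynomiality.
\item The fact that abelian normal subgroups of primitive groups are regular elementary abelian says nothing about abelian \emph{quotients} $N/N'$ of normal subgroups. Already $|G:G'|\le m$ for primitive $G$ of degree $m$ is a nontrivial theorem of Aschbacher--Guralnick.
\item Your block induction does not get started. $N$ fixes each of its own orbits, so the system of $N$-orbits only shows that $N$ is a subdirect subgroup of $R^{n/m}$, where $R$ is the transitive constituent on one orbit. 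For subdirect subgroups, $\exp(N/N')$ is not controlled by $\exp(R/R')$. For example, $R=\SL_2(5)$ is perfect, yet $\{(x,y)\in R^2 : xZ(R)=yZ(R)\}\cong \SL_2(5)\times C_2$ has abelianization $C_2$. Peeling off one coordinate at a time only yields a bound of shape $m^{n/m}$. This is exactly the commutator mismatch you flagged, and it is left unresolved.
\end{itemize}

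\textbf{The missing ingredient.} What you need is Guralnick's theorem on cyclic quotients of transitive groups, which the paper packages as \Cref{lem:guralnick}. Guralnick's invariant $\gamma$ has four properties:
\begin{itemize}
\item it equals the exponent on abelian groups;
\item it does not increase under quotients;
\item it takes the same value on a subdirect subgroup of $R^k$ as on $R$;
\item it satisfies $\gamma(R)\le m$ for $R$ transitive of degree $m$.
\end{itemize}
Hence $\exp(N/N')=\gamma(N/N')\le\gamma(N)=\gamma(R)\le m\le n$. The invariance under subdirect powers is precisely what absorbs the mismatch.

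\textbf{Smaller points.}
\begin{itemize}
\item Your opening claim, that composition factors of $G$ are composition factors of its transitive constituents and point stabilizers, is false (consider $A_5$ on five points). However, the dichotomy you then state is exactly the paper's proof of \Cref{lem:simple-sections-sym-n}, and no reduction to primitive groups is needed. That dichotomy is: for $T=H/K$ with $H\le S_m$, either $H_xK=H$ and $T$ is a quotient of $H_x\le S_{m-1}$, or $H_xK/K$ is a proper subgroup of index at most $m$.
\item The Praeger--Saxl bound $|G|\le 4^n$ is for primitive groups not containing $A_n$, not for transitive ones (consider $S_{n/2}\wr S_2$). So your fallback bookkeeping for a weaker $\eps$-bound is not available as stated.
\end{itemize}
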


    The best known results in the direction of (a) and (b) are \eqref{eq:helfgott--seress} and \eqref{eq:HMPQ} respectively.
    In view of the latter, we are tantalizingly close to reducing \Cref{conj:folk} to the case of $A_n$.
    For now the best we can say is the following.

    \begin{corollary}
        \label[corollary]{cor:transitive-2}
        Let $G \le S_n$ be a transitive permutation group. Let $m \ge 5$ be the largest integer such that $A_m \in \nacomp(G)$, or else let $m = 1$ if $G$ has no alternating composition factors.
        Then
        \[
            \diam(G) \ll n^{O((\log \log n)^2 + (\log m)^3 \log \log m)}.
        \]
    \end{corollary}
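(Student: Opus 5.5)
The plan is to deduce this from \Cref{cor:transitive}, which gives $\diam(G) \ll n^{32 + 7\theta_1(G)}$. It then suffices to show
\[
  \theta_1(G) \ll (\log\log n)^2 + (\log m)^3 \log\log m .
\]
That is, for every $T \in \nacomp(G)$ we want $\log\diam(T) \ll \big((\log\log n)^2 + (\log m)^3\log\log m\big)\log\mu(T)$. I will split according to the classification of $T$.

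\emph{Alternating factors.} If $T = A_k$, then $k \le m$ by the definition of $m$, and $\mu(A_k) = k$. The Helfgott--Seress bound \eqref{eq:helfgott--seress} gives $\log\diam(A_k) \ll (\log k)^3 \log\log k \cdot \log k$. So the ratio is $\ll (\log k)^3\log\log k \le (\log m)^3\log\log m$.

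\emph{Bounded-rank factors.} If $T$ is sporadic, or exceptional of Lie type, or classical of rank bounded by an absolute constant, then Breuillard--Green--Tao and Pyber--Szab\'o give $\diam(T) \le (\log|T|)^{O(1)}$. Since $|T| \le \mu(T)!$, we have $\log\log|T| \ll \log\mu(T)$. So the ratio is $O(1)$ in this case.

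\emph{Classical factors of unbounded rank.} This is the main case. Let $T \le \PGL(V)$ be simple classical with $\dim V = d$ over $\F_q$. By \eqref{eq:HMPQ}, $\log\diam(T) \ll (\log d)^2 \log|V|$. The standard lower bounds for minimal degrees of classical groups (Cooperstein, and Kleidman--Liebeck tables) give $\mu(T) \ge |V|^{c}$ for an absolute $c>0$ once $d$ is large. Hence the ratio is $\ll (\log d)^2$, and it remains to show $d \ll \log n$.

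\emph{Main obstacle: bounding $d$ in terms of $n$.} The crude estimate $|T| \le n!$ only gives $d^2\log q \ll n\log n$. That yields $(\log d)^2 \ll (\log n)^2$, which is too weak. What I need is the sharper fact that a non-alternating composition factor $T$ of a subgroup of $S_n$ satisfies $\mu(T) \le n$, or at least $\mu(T) \le n^{O(1)}$.

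To prove this I would proceed as follows.
\begin{enumerate}
  \item Reduce to primitive groups. Passing to orbits and then to block systems shows that every composition factor of $G$ is a composition factor of some primitive group of degree at most $n$.
  \item Apply O'Nan--Scott. The socle factors of such a primitive group act faithfully on a set of size at most $n$, directly or via the product-action coordinates, so they satisfy $\mu(T) \le n$.
  \item Handle the point-stabiliser quotient. Its nonabelian composition factors come either from a transitive group on the socle factors, which has degree less than $\log_5 n$ and is handled by induction, or from $\Out$ of a simple group, which is soluble or small.
  \item Conclude with Liebeck--Saxl/Babai type results: a primitive group not containing a large alternating section has order $n^{O(\log n)}$, which also bounds its composition factors.
\end{enumerate}
If this works, $|V| \le \mu(T)^{O(1)} \le n^{O(1)}$, so $d \ll \log n$ and $(\log d)^2 \ll (\log\log n)^2$.

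Combining the three cases gives the bound on $\theta_1(G)$, and substituting it into \Cref{cor:transitive} gives the stated bound on $\diam(G)$.
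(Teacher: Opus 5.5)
Your reduction and case split are the same as the paper's proof. You bound $\theta_1(G)$ and insert it into \Cref{cor:transitive}. Alternating factors are handled by \eqref{eq:helfgott--seress}, bounded-rank factors by Breuillard--Green--Tao and Pyber--Szab\'o, and classical $T \le \PGL(V)$ by \eqref{eq:HMPQ} together with $\mu(T) \gg |V|^{1/2}$. Those cases are fine.

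The gap is the step you yourself flag as the main obstacle: a bound $\mu(T) \le n^{O(1)}$ for composition factors of $G$. You only sketch it conditionally, and the sketch does not close.
\begin{itemize}
\item Step~(3) treats the point-stabiliser quotient of a primitive group as either a permutation group on the socle factors or a subgroup of some $\Out(T)$. This omits the affine case $V \rtimes H$ with $H \le \GL_d(p)$ irreducible. There the composition factors of $H$ can be classical of unbounded rank (e.g.\ $\mathrm{PSL}_d(p)$ in $\AGL_d(p)$), which are precisely the factors at issue.
\item Step~(4) only yields $|T| \le n^{O(\log n)}$, hence at best $\mu(T) \le n^{O(\log n)}$. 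That does not give your claimed $|V| \le n^{O(1)}$.
\end{itemize}
The route could be salvaged: use $\log|T| \gg (\dim V)^2 \log q$ to get $\dim V \ll \log n$ directly, and recurse through the large product-action groups with alternating socle. But that rests on Mar\'oti/Liebeck--Saxl-type classification results and still needs to be written out.

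None of this machinery is needed. The fact you want is \Cref{lem:simple-sections-sym-n}, which is elementary. It is already used in the proof of \Cref{cor:transitive}, your own starting point.
\begin{itemize}
\item A composition factor of $G$ has the form $T \cong H/K$ with $K \nsgp H \le S_n$. Choose the degree minimal among all such representations of $T$.
\item If $H_xK = H$ for a point $x$, then $H_x/(H_x \cap K) \cong T$, with $H_x$ acting faithfully on $n-1$ points. This contradicts minimality.
\item Hence $H_xK/K$ is a proper subgroup of $T$ of index at most $|H:H_x| \le n$, i.e.\ $\mu(T) \le n$.
\end{itemize}
Combined with $\mu(T) \gg |V|^{1/2}$, this gives $\log \dim V \ll \log\log n$. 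Your classical case then yields $\theta_1 \ll (\log\log n)^2$, exactly as in the paper.
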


    \Cref{cor:transitive} implies in particular that if the nonabelian composition factors of $G \le S_n$ have bounded rank then $\diam(G) \ll n^{O(1)}$. In particular if $G$ is soluble then $\diam(G)$ is polynomially bounded. A direct application of \Cref{cor:transitive} gives $\diam(G) \ll n^{39}$, but in fact we can obtain $\diam(G) \ll n^5$ by being more careful with our tools.

    \begin{theorem}
        \label[theorem]{cor:soluble-transitive}
        If $G \le S_n$ is soluble and transitive then $\diam(G) \ll n^5$.
    \end{theorem}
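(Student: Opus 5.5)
Our plan is to combine \Cref{thm:main-soluble} with classical structural bounds for soluble transitive groups, and to track the parameters more carefully than a direct appeal to \Cref{cor:transitive} does. The first ingredient is a bound on $|G|$: by Dixon's theorem (refined by P\'alfy and Wolf), a soluble subgroup of $S_n$ has order at most $24^{(n-1)/3}$, so $\log_2|G| \ll n$. Used naively in \Cref{thm:main-soluble}, the factor $4^{L-1}$ is wasteful. Instead we induct along a block system and only ever apply the estimate $\eps_0 4^{L-1}(\log_2|H|)^2$ to groups $H$ where both $L$ and $\eps_0$ are small.

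Concretely, we fix a maximal chain of block systems. Then $G$ embeds in an iterated wreath product $G \le P_1 \wr P_2 \wr \cdots \wr P_k$ of primitive soluble groups $P_i \le S_{n_i}$ with $n = n_1 \cdots n_k$. A primitive soluble group $P \le S_m$ is affine, $P = V \rtimes H$ with $m = p^d$ and $H \le \GL_d(p)$ irreducible soluble. Its derived length is $O(\log d)$ (Huppert/Newman bound) and its order is at most $m^{O(1)}$ (P\'alfy: $|P| \le 24^{-1/3} m^{3.25}$). Next we apply a relative form of the argument behind \Cref{thm:main-soluble}, handling one block level at a time. Let $K$ be the kernel of the action on the top block system $\B$ with blocks of size $b$. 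By induction, $G/K$ (transitive of degree $n/b$) has diameter $\ll (n/b)^5$, so every element of $K$'s transversal structure is reachable. We then generate $K$ by conjugates of short words (Schreier generators of length $\ll (n/b)^5$) and bound $\diam(K, \cdot)$ in terms of these. The point is that $K$ is a subdirect subgroup of $P^{n/b}$, where $P$ is the induced block group, and $G/K$ acts transitively on the coordinates. We therefore aim for a lemma of the following form: if $K \le P^{r}$ is normal in $G$ with $G$ transitive on the $r$ factors, then every element of $K$ has length $\ll r^{O(1)} \cdot \diam(P)\cdot(\text{Schreier cost})$. To keep this from multiplying badly, we want the contributions to add up, roughly $T(n) \le T(n/b) + C (n/b)^{a} b^{c}$, rather than compounding. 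Summing the geometric series gives $T(n) \ll n^5$ if at each level the cost is at most $(n/b)^{4} \cdot b^{O(1)}$ with exponents adjusted. For the base case, a primitive affine $P$ has $\diam(P) \le \eps_0 4^{L-1}(\log|P|)^2 \le p\cdot d^{O(1)} \log^2 m \le m^{1+o(1)}$, which is comfortably within $m^5$.

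The main obstacle is the middle step: controlling the length of elements of the normal subgroup $K$ without incurring a multiplicative loss per level. Nontrivial subdirect products of $P^r$ can be complicated, and the diagonal structure forces one to use commutator or power tricks, where the exponent $\eps_0$ enters. Our first attempt would be to run the proof of \Cref{thm:main-soluble} on the derived series of $G$ directly, using $\log_2|G| \ll n$. We would then bound $\eps_0$ and $4^{L}$ jointly: the derived length of a soluble transitive group of degree $n$ is $O(\log n)$, so $4^{L} \le n^{O(1)}$, and the abelian factors $G^{(i)}/G^{(i+1)}$ have exponent at most $n$. This would already yield a polynomial bound. The real work is sharpening the constant in $L \le c\log_2 n$ (or replacing $4^{L}$ by a smaller growth factor exploiting that $\eps_0 \le n$ only at the bottom) so that $\eps_0 4^{L-1}(\log|G|)^2 \ll n \cdot n^{2}\cdot n^{2} = n^5$. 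That requires the effective estimate $4^{L-1} \ll n^2$, i.e.\ $L \le \log_2 n + O(1)$ along the relevant series, and we expect this optimisation to be the delicate part.
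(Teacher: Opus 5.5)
Your proposal has a genuine gap: neither route reaches $n^5$, and the one concrete reduction you state is false.

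\textbf{The block-system induction is multiplicative by construction.} If $Y$ is the set of Schreier generators of the kernel $K$, then $\len_X(G)\le\diam(G/K)+\len_X(Y)\len_Y(K)$ with $\len_X(Y)\le 2\diam(G/K)+1$. So the hypothesis $\diam(G/K)\ll(n/b)^5$ propagates only if $\len_Y(K)\ll b^5$. Nothing guarantees this. For $G=C_2\wr C_{n/2}$ with its standard two generators, the Schreier generators of $K=C_2^{n/2}$ are the coordinate vectors, so $\len_Y(K)=n/2$. You note that the contributions should add rather than multiply. But the lemma you hope for is unproved and has exactly this multiplicative shape, and nothing in the plan produces the recursion $T(n)\le T(n/b)+\cdots$.

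\textbf{The fallback needs a false bound on derived length.} You propose running the soluble-group bound $\diam(G)\le\eps_0 4^{L-1}(\log_2|G|)^2$ on the derived series with the uniform estimate $\eps_0\le n$. This needs $4^{L-1}\ll n^2$, i.e.\ $L\le\log_2 n+O(1)$, and that is false. Dixon's bound $L\le\frac52\log_3 n$ is attained by iterated wreath products of $\AGL_2(3)$ (derived length $5$ on $9$ points). There $L\approx 1.58\log_2 n$ and $4^L\approx n^{3.15}$, so this route gives only about $n^{6.16}$.

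\textbf{The missing idea.} You need to trade the growth factor against the exponent instead of bounding each uniformly, and to build generators with the generalized Milnor lemma rather than Schreier's lemma. The paper's argument runs as follows.
\begin{enumerate}
\item Embed $G$ in $P_h\wr\cdots\wr P_1$ with $P_j\le\Sym(d_j)$ primitive soluble. Descend a normal series $G=G_0>G_1>\cdots>G_L=1$ level by level from the top, each step being $G_{i+1}=G_i'$ or $G_{i+1}=\gamma_3(G_i)$.
\item Take commutators (resp.\ triple commutators) of the previous generators and apply the generalized Milnor lemma. This gives generators $X_i$ of $G_i$ with $\len_X(X_i)\le a_i\len_X(X_{i-1})+2\log_2|G|$, where $a_i\in\{4,10\}$. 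Hence $\len_X(X_i)\ll a_1\cdots a_i\,n$: each step costs only a constant factor plus an additive $O(n)$.
\item By Guralnick's lemma, the exponent of the abelian or class-$2$ section $G_i/G_{i+1}$ is at most the orbit length $n_i\le n/(d_1\cdots d_j)$ once levels $1,\dots,j$ are passed. So the deep terms, which carry large growth factors, have small exponent.
\item To balance the two you need a per-level estimate, which the paper imports from earlier work. Each $P_j$ has such a series with $\ell_j$ steps, $m_j$ of them $\gamma_3$-steps, satisfying $4^{\ell_j}(5/2)^{m_j}\le d_j^3$. (A $\gamma_3$-step costs $10$, where two derived steps cost $16$.) The bound $\ell_j\ll 1+\log\log d_j$ handles the partially traversed level.
\item Then $\len_X(X_i)\,n_i\ll n^2(d_1\cdots d_j)^2(\log d_{j+1})^{O(1)}\ll n^4$. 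Summing $\log_2|G_i:G_{i+1}|$ over $i$ contributes one more factor $\log_2|G|\ll n$, giving $n^5$.
\end{enumerate}
Your parenthetical about $\eps_0$ gestures at this trade-off, but the trade-off is the heart of the proof. With the derived series alone it yields only $n^{5.16}$; the $\gamma_3$ refinement is what brings the exponent down to $5$.
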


    Let us briefly give the idea of the proof of \Cref{thm:main} as well as its corollaries.
    The starting point is that there is a subadditive extension bound for lengths: for any normal subgroup $N \nsgp G$,
    \[
        \len_X(G) \le \len_X(N) + \len_X(G/N).
    \]
    Here $\len_X(G/N)$ is just the diameter of the quotient $G/N$ with respect to the projection $XN/N$ of the generators, so
    \[
        \len_X(G/N) \le \diam(G/N).
    \]
    Bounding the term $\len_X(N)$ is tricky, however, as $X$ is a generating set for $G$, not $N$, but at least
    \[
        \len_X(N) \le m \diam(N)
    \]
    provided that $N$ has a generating set $Y$ with $\len_X(Y) \le m$.
    In general a basic lemma of Schreier gives $m \le 2\diam(G/N) + 1$, and this implies
    \[
        \diam(G) \ll \diam(N) \diam(G/N).
    \]
    However, because the diameters are multiplied in this bound,
    this is an expensive inequality that we must apply sparingly.
    Our arguments therefore focus on constructing generating sets for small normal subgroups by more efficient means than Schreier's lemma.

    In the case of a soluble group, we achieve this by descending the derived series using commutators and conjugation in a natural way,
    taking advantage of an old lemma of Milnor
    (used in the proof of the Milnor--Wolf theorem).
    We also take advantage of the fact that the derived length cannot be very large: the derived length of a soluble group $G$ is bounded by $O(\log \log |G|)$ by a result of Glasby~\cite{glasby}.
    If $G$ happens to be a soluble transitive permutation group of degree $n$ then we have even sharper bounds for its derived length, and also for the exponent parameter $\eps(G)$.

    In the case of an arbitrary finite group, we first consider the quotient of $G$ by its soluble radical.
    The quotient embeds as a subdirect product of a finite collection of monolithic groups with nonabelian socles.
    The prototypical example of such a group is the iterated wreath product
    \[
        G = A_5 \wr \cdots \wr A_5.
    \]
    For such groups we use an inductive argument to bound the length of some nontrivial element of the socle.
    For this we rely on a generalization of an argument of Wilson from \cite{wilson2}
    (in fact, the study of Wilson's papers~\cites{wilson1,wilson2} motivated much of the present paper).

    There is a well-known theorem of P\'alfy--Wolf~\cites{palfy,wolf} that asserts that the order of a primitive soluble group $G \le S_n$ is bounded by a polynomial function of $n$; in fact $|G| \le 24^{-1/3} n^c$, where $c \approx 3.25$.
    This influential result was extended by Pyber~\cite{Py93}*{Theorem~2.10} (see also Guralnick--Mar\'oti--Pyber~\cite{Gur-Mar-Pyb}), who showed that if $G \le S_n$ is primitive then the product of the orders of its abelian composition factors is bounded by the same function,
    and moreover that the number of nonabelian composition factors is $O(\log n)$.
    In the analysis of monolithic groups with nonabelian socles, it turns out, somewhat surprisingly, that we need the following natural generalization of these results.

    \begin{theorem}[Generalized P\'alfy--Wolf Theorem]
        \label[theorem]{thm:gen-palfy-wolf}
        If $G \le S_n$ is a primitive permutation group of degree $n$ with composition series $G = G_0 > \cdots > G_\ell = 1$ and factors $T_i = G_{i-1} / G_i$ then
        \(
            \prod_{i=1}^\ell \mu(T_i) < n^5.
        \)
    \end{theorem}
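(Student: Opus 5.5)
The quantity $\Pi(G) = \prod_i \mu(T_i)$ does not depend on the chosen composition series (Jordan--H\"older), and it is multiplicative over normal series: $\Pi(G) = \Pi(N)\Pi(G/N)$. The plan is to use the O'Nan--Scott theorem and induction on $n$, reducing to bounds of the form already known from P\'alfy--Wolf and Pyber. Abelian composition factors $C_p$ have $\mu = p$, so their contribution is exactly the product of the orders of the abelian factors. By Pyber's theorem (see also Guralnick--Mar\'oti--Pyber), that product is at most $24^{-1/3} n^{c}$ with $c \approx 3.25$ for $G$ primitive. The task is therefore to show that the nonabelian factors contribute at most about $n^{1.75}$.

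The first step is to strengthen the statement to transitive groups, since this makes the induction run. The aim is $\Pi(G) \le n^{5}$, or a bound like $\Pi(G)\le n^{c'}$ for all transitive $G\le S_n$, perhaps with a slightly different exponent, to be fed back into the primitive case. If $G$ is transitive with a block system of $m$ blocks of size $k$, then $G \le H \wr K$ with $K \le S_m$ transitive and $H \le S_k$ primitive or transitive. The kernel $N$ of the action on blocks embeds as a subdirect product in $H_1 \times \cdots \times H_m$. Multiplicativity gives $\Pi(N) \le \Pi(H)^m$, which is far too large. So the transitive strengthening is false in this naive form, and I instead work only with primitive groups, using only the fact that point stabilisers and components are smaller primitive groups.

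Next, I run O'Nan--Scott for primitive $G$ with socle $T^k$.
\begin{enumerate}
\item In the affine case, $G = V \rtimes G_0$ with $G_0 \le \GL(V)$ irreducible and $n = |V|$. For the nonabelian factors of $G_0$, I use that $\mu(S) \le |S|$ together with Pyber's bound that there are $O(\log n)$ of them. More sharply, I bound $\mu$ of each nonabelian factor of a linear group by its minimal projective representation degree times a field-size factor. I then pass through $G_0$'s chief series, Aschbacher style: imprimitive, tensor or semilinear reductions, and the quasisimple case where $\mu(S) \le |V|$ essentially. Induction then handles $G_0$ as a linear group, giving $\Pi(G_0) \le |V|^{c}$.
\item In the non-affine case, $\Pi(\soc G) = \mu(T)^k$, and $G/\soc G$ embeds in $\Out(T) \wr P$ with $P \le S_k$ the induced action on factors. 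For the product-action and twisted-wreath types, $n \ge \mu(T)^k$ or $n \ge |T|^k$, so the socle costs at most $n$. $\Out(T)^k$ is soluble-ish with tiny order, giving at most $(\log n)^{k}$, which is at most $n^{O(1)}$ with a small constant. For $P$, I apply induction: $P$ acts on $k \le \log_5 n$ points, and if $P$ is transitive, then via a primitive block-quotient decomposition one gets $\Pi(P) \le k^{C}$, which is negligible. The diagonal type has $n = |T|^{k-1}$ and is handled similarly. The almost simple case uses $\mu(T) \le n$ and $|\Out T| $ small.
\end{enumerate}

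The main obstacle is the bound for $\Pi(P)$ for transitive $P \le S_k$ and for the linear group $G_0$ in the affine case. Iterated wreath products show that $\Pi$ of a transitive group can be as large as $c^{k}$, so I would prove a bound of the form $\Pi(P) \le c^{k}$ for all subgroups $P \le S_k$. This follows by induction on orbits and blocks from the primitive case together with $\Pi(H\wr K)\le \Pi(H)^{m}\Pi(K)$. Since $k\le\log_5 n$, this gives $c^k \le n^{\log_5 c}$, which is acceptable when $c$ is small. Tracking constants so that the total stays below $n^5$, especially combining Pyber's $n^{3.25}$ in the affine case with the $G_0$ nonabelian contribution, is where the care lies. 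I would first try to show that every nonabelian factor $S$ of an irreducible $G_0 \le \GL_d(p)$ has $\prod \mu(S) \le p^{d}$ overall, via the Aschbacher reduction.
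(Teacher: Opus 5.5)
Your outline matches the paper's: $\mucf=\muab\,\muna$, Pyber's theorem absorbs every abelian factor, and in the non-affine case $\muna(G)=\mu(T)^k\muna(\bar G)$ (since $\Out(T)$ is soluble), with $\mu(T)^k\le n$, $5^k\le n$, and an exponential bound for $\muna$ of the group $\bar G\le S_k$ induced on the simple factors.

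Track only $\muna$ there. Your side estimates are wrong as stated: $(\log n)^k\le n^{O(1)}$ fails for $k\approx\log_5 n$, and, as you note yourself, $\Pi(P)\le k^C$ fails too. Also, the permutation bound must have the form $b^{k-1}$, not $c^k$, or the block induction does not close.

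The genuine gap is the affine case, which is where the work lies. Your concrete target, $\muna(G_0)\le p^d$ for irreducible $G_0\le\GL_d(p)$, is false. The group $G_0=\GL_3(2)\wr A_5\le\GL_{15}(2)$ is irreducible with $\muna(G_0)=7^5\cdot 5=84035>2^{15}$, and wreathing with further $A_5$'s makes $\muna(G_0)/p^d$ grow exponentially in $d$. This is structural. At an imprimitive step, $\muna(G_0)\le\muna(N_1)^m\muna(G_0/N)$, and the group permuting the $m$ blocks can cost $b^{m-1}$, so an inductive bound $A\,p^{cd}$ survives only if $Ab\le 1$. The paper takes $A=b_1^{-1}$ with $b_1=5^{1/4}$, which is sharp for arbitrary permutation groups by $A_5\wr\cdots\wr A_5$. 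The base case $\SL_3(2)$ on $\F_2^3$ then forces $c\ge c_1=\log_8(7b_1)\approx 1.129$. That fits inside Pyber's budget, but it has to be the inductive hypothesis. Moreover, the three bounds (primitive permutation groups, arbitrary permutation groups, completely reducible linear groups) must be proved in one simultaneous induction, because each feeds the others.

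Second, ``Aschbacher-style reductions'' do not by themselves dispose of primitive linear $G_0$. The paper uses Suprunenko's description of $F^*(H)$, where $H\nsgp G_0$ has $G_0/H$ cyclic. The quasisimple tensor-induced components $U_j$ need $\mu(T_j)<p^{t_j}$ combined with the permutation bound on $u_j$ points, with $(t_j,p)=(3,2),(4,2)$ checked by hand. More importantly, each extraspecial component $E_i$ of order $p_i^{1+2n_i}$ yields a section $H/C_H(E_i)$ whose nonabelian factors lie in a completely reducible subgroup of $\Sp_{2n_i}(p_i)$, over a \emph{different} prime $p_i\mid p^r-1$. So the linear induction must run over pairs (dimension, prime). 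It uses $p_i^{2n_i}<p^{p_i^{n_i}}$ and discards the soluble exception $\Sp_2(3)$ with $p=2$. Nothing in your sketch supplies this step, so as written the affine case, and with it the theorem, remains unproved.
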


    This result may be of independent interest. For example, it implies the theorem of Babai--Cameron--P\'alfy~\cite{BCP82} with an exponent linear in the rank, since $\mu(T) \ge |T|^{c/r}$ for simple groups of rank at most $r$. (This also follows from Liebeck--Shalev~\cite{liebeck-shalev-1999}*{Theorem~1.4}.)

    \begin{corollary}[Babai--Cameron--P\'alfy with linear exponent]
        \label[corollary]{cor:explicit-BCP}
        There is a constant $C$ such that the following holds.
        If $G \le S_n$ is a primitive permutation group of degree $n$ and $G$ has no nonabelian composition factor of rank greater than $r$ then $|G| \le n^{Cr}$.
    \end{corollary}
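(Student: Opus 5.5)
We derive this from \Cref{thm:gen-palfy-wolf} by bounding $|T|$ in terms of $\mu(T)$ for each composition factor $T$. Let $G = G_0 > \cdots > G_\ell = 1$ be a composition series of $G$ with factors $T_i$. Then $|G| = \prod_i |T_i|$, and \Cref{thm:gen-palfy-wolf} gives $\prod_i \mu(T_i) < n^5$. It therefore suffices to show that every composition factor $T_i$ satisfies
\[
    |T_i| \le \mu(T_i)^{C' r}
\]
for an absolute constant $C'$. Multiplying over $i$ then gives $|G| \le n^{5C'r}$, so we may take $C = 5C'$.

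\textbf{Abelian factors.} If $T_i \cong C_p$, then $\mu(C_p) = p = |T_i|$, and the inequality holds with exponent $1$.

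\textbf{Nonabelian factors.} By hypothesis each nonabelian $T_i$ has rank at most $r$. We split into cases according to the classification.
\begin{enumerate}[(i)]
    \item If $T = A_m$ with $m \le r$, then $\mu(T) = m$ and $|T| \le m! \le m^{m} \le \mu(T)^{r}$.
    \item The sporadic groups are finitely many, so they are absorbed into the constant. This uses $\mu(T) \ge 5$ for every nonabelian simple group $T$, so that any bounded $|T|$ is at most $\mu(T)^{O(1)}$.
    \item If $T$ is of Lie type with untwisted Lie rank $k \le r$ over $\F_q$, then $|T| \le q^{O(k^2)}$. On the other hand, the minimal index of a proper subgroup is at least roughly the number of points of the natural module or the smallest parabolic quotient, so $\mu(T) \ge q^{ck}$ for some absolute $c > 0$. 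For example, $\mu(\PSL_{k+1}(q)) = (q^{k+1}-1)/(q-1) \ge q^{k}$. Hence $|T| \le \mu(T)^{O(k)} \le \mu(T)^{O(r)}$.
\end{enumerate}
This is the standard estimate $\mu(T) \ge |T|^{c/r}$ quoted just before the corollary, so in the paper I would simply cite it, e.g.\ via the tables of minimal degrees in Kleidman--Liebeck, or via Liebeck--Shalev.

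The only point requiring any care is the uniform lower bound $\mu(T) \ge q^{ck}$ across the Lie-type families, including the small exceptional cases. The known tables of minimal degrees give this directly, so there is no genuine obstacle: the corollary is essentially immediate from \Cref{thm:gen-palfy-wolf}.
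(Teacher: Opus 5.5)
Your proposal is correct and follows the paper's argument. The paper likewise cites the bound $\mu(T) \ge |T|^{c/r}$ for nonabelian simple $T$ of rank at most $r$ (from CFSG and minimal-degree estimates), notes that it also holds trivially for abelian factors, and combines it with \Cref{thm:gen-palfy-wolf} to get $|G|^{c/r} \le \mucf(G) < n^5$, hence $C = 5/c$; your case-by-case sketch of the lower bound on $\mu(T)$ simply unpacks that cited estimate.
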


    \Cref{thm:gen-palfy-wolf} and an iterative application of Schreier's lemma give a bound on the diameter of a primitive permutation group in terms of $\theta_1(G)$. This special case of \Cref{cor:transitive} is used in the proof of \Cref{thm:main}.
    To deal with an arbitrary finite group, we ascend a normal series and alternately cover an insoluble normal subgroup or apply \Cref{thm:main-soluble}.

    We now discuss important applications of our results
    to the growth of residually finite groups.
    If $G$ is a group generated by a finite set $X$, the \emph{growth function of $X$} (or \emph{of $G$}, by abuse) is defined by
    \[
        \gamma_X(n) = |B_X(n)|, \qquad B_X(n) = \{ g\in G:\len_X(g)\le n \}.
    \]
    Growth functions are conventionally compared in the following way: we write $f \preceq g$ if there is a constant $C > 0$ such that $f(n) \le C g(Cn)$ for all $n > 0$.
    This relation defines a preorder on the set of growth functions, and the equivalence class $[\gamma_X]$ is an invariant of $G$, i.e., independent of the choice of generating set $X$; in fact it is a quasi-isometry invariant.
    Growth functions of typical examples (abelian groups, soluble groups, linear groups, ...) are generally either polynomially bounded or exponential.
    A group $G = \gen X$ is said to have \emph{intermediate growth} in all other cases, i.e., if $n^d \prec \gamma_X(n) \prec \exp(n)$ for all $d$.

    It is not difficult to prove that a virtually nilpotent group has polynomial growth, i.e., $\gamma_X(n) \preceq n^d$ for some integer $d$.
    Gromov~\cite{gromov} famously proved the converse: if $G$ has polynomial growth then $G$ is virtually nilpotent.
    \emph{Grigorchuk's gap conjecture} predicts the existence of a constant $\beta > 0$ such that the same conclusion holds under the much weaker hypothesis that $\gamma_X(n) \prec \exp(n^\beta)$.
    The strong version asserts that one can take $\beta = 1/2$.
    See the introduction to \cite{EM25} for more on the history of this question,
    and see \cite{EM25}*{Section~5} for even stronger variants.

    \begin{conjecture}[Grigorchuk]
        \label[conjecture]{conj:gap}
        There is a constant $\beta > 0$ such that the following holds.
        If $G$ is a group generated by a finite set $X$ and $\gamma_X(n) \prec \exp(n^\beta)$ then $G$ is virtually nilpotent.
    \end{conjecture}

    As observed by Grigorchuk (see \cite{grig-ICM}*{Theorem~7.4}), the main result of Bajorska--Macedo\'nska~\cite{bajorska--macedonska} implies that the gap conjecture reduces to two extreme cases: residually finite groups and simple groups.
    For further reduction theorems in this spirit, see \cite{grig-gaps-and-JI}.
    All the early examples of groups of intermediate growth were residually finite.
    Non-residually-finite examples were first constructed by Erschler~\cite{erschler-2004} and Bartholdi--Erschler~\cite{bartholdi-erschler-2012}.
    The first simple group of intermediate growth was constructed by Nekrashevych~\cite{nekrashevych} (solving a long-standing open problem).

    Using our methods we can show that Babai's conjecture (\cref{conj:babai}) implies Grigorchuk's gap conjecture for residually finite groups.
    We formulate the result in an unconditional way as follows.

    \begin{theorem}[\Cref{conj:babai} $\implies$ \Cref{conj:gap} for residually finite groups]
        \label[theorem]{thm:gap-RF}
        For any $\theta > 0$ there is some $\beta = 1/(20 + 10 \theta) > 0$ such that the following holds.
        If $G = \gen X$ is a finitely generated group such that
        \begin{enumerate}
            \item $G$ has a sequence of finite-index normal subgroups $N_i$ such that $\bigcap_{i=1}^\infty N_i = 1$ and such that $\theta_2(G / N_i) \le \theta$ for all $i \ge 1$, and
            \item $\gamma_X(n) \preceq \exp(n^\beta)$,
        \end{enumerate}
        then $G$ is virtually nilpotent.
    \end{theorem}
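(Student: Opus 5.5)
The plan is to reduce to a known "finite-quotient diameter" criterion for the gap conjecture, and to feed it the diameter bound of \Cref{cor:theta-2-cor}. The relevant criterion goes back to Wilson and is developed in \cite{EM25}. It says, roughly, that if $G = \gen X$ is residually finite with finite quotients $G_i = G/N_i$, and the diameters satisfy $\diam(G_i, X) \le (\log |G_i|)^{C}$ uniformly, then growth slower than $\exp(n^{1/C'})$ forces virtual nilpotence. The underlying mechanism is the following.

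If $G$ is not virtually nilpotent, then by Gromov's theorem its growth is superpolynomial. Hence $|G_i| = |B_X(\diam(G_i,X))\,N_i/N_i| \le \gamma_X(\diam(G_i,X))$. Combined with $\diam(G_i,X)\le (\log|G_i|)^{C}$ and $\gamma_X(n)\preceq\exp(n^\beta)$, this gives
\[
\log|G_i| \ll (\log|G_i|)^{C\beta}.
\]
This is a contradiction once $C\beta<1$ and $|G_i|\to\infty$; the latter holds because $G$ is infinite and $\bigcap N_i = 1$. The catch is that \Cref{cor:theta-2-cor} also carries the factor $\eps(G_i)$, which is unbounded in general (think of $\mathbf Z$). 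So the argument has to separate off an abelian-exponent part.

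Concretely, I would proceed in three steps.

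\emph{Step 1: bound the exponents.} I claim that growth bounded by $\exp(n^\beta)$ with $\beta<1$ still allows large $\eps$. To handle this, pass first to the case where the relevant abelian sections have bounded exponent, or else show that large exponent is cheap. Take $N \nsgp G_i$ with $N/N'$ of exponent $e$. Then some element $g$ of order $e$ in a section of $G_i$ is visible. I would rather argue directly: if $\eps(G_i)$ is large, use a torsion/abelian-section argument to show that $G$ maps onto a group containing large cyclic normal sections. Then apply the known dichotomy, namely Wilson's lemma that residually finite groups with subexponential growth of a fixed kind have bounded exponent of abelian sections unless they are virtually nilpotent. This is the step I expect to be the main obstacle. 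The cleanest route I would try first is to replace $G$ by a finite-index subgroup and $N_i$ by a refined chain. Choose $N_i$ so that $G/N_i$ has abelian sections of exponent at most $p$, i.e. reduce mod primes, in the spirit of the pro-$p$ reduction. The justification is that the set of $n$ for which $\eps(G_i)\le (\log|G_i|)^{O(1)}$ fails can be controlled via the bound $|G_i|\le\gamma_X(\cdot)$ applied to a cyclic section of size $\eps$. Such a section has diameter at least about $\eps/|X|$ in that section, so its growth only costs polynomially.

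\emph{Step 2: apply the diameter bound.} With $\theta_2(G_i)\le\theta$, \Cref{cor:theta-2-cor} gives
\[
\diam(G_i, X) \ll \eps(G_i)\,(\log|G_i|)^{31+11\theta}.
\]
To reach the stated $\beta=1/(20+10\theta)$, I would not use the corollary verbatim. Instead I would rerun the proof of \Cref{thm:main} with the sharper accounting available in this setting: the exponent $31+5\theta_1$ together with $\max\diam(T)$ can be tightened when only a growth comparison is needed.

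\emph{Step 3: conclude.} Combine Steps 1 and 2 with the growth inequality $\log|G_i|\le C\,\diam(G_i,X)^\beta$ to obtain the contradiction for large $i$. Hence $G$ is virtually nilpotent.
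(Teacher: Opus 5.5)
Your proposal has a genuine gap, and it is where you expected: Step~1.

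\textbf{Step~1.} The ``dichotomy'' you attribute to Wilson is not a known lemma. It says that slow growth forces $\eps(G/N_i)$ to be bounded unless $G$ is virtually nilpotent. Combined with \Cref{cor:theta-2-cor}, it would give the theorem at once (with a worse $\beta$). So it is essentially equivalent to what you are proving and cannot serve as an input. None of your heuristics for it work:
\begin{itemize}
\item A large cyclic section gives a \emph{lower} bound on the diameter. That is perfectly compatible with slow growth (think of $\mathbf Z$), so it cannot become an upper bound on $\eps$.
\item Passing to prime-power quotients does not bound exponents (think of $\mathbf Z/p^k$).
\item Already for $G=\mathbf Z$, no choice of the $N_i$ with $\bigcap N_i=1$ keeps $\eps(G/N_i)$ bounded. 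Any such reduction must use non-virtual-nilpotence in an essential way, and that is the hard part.
\end{itemize}

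\textbf{Step~2.} This step is also unsupported. \Cref{cor:theta-2-cor} gives the exponent $31+11\theta$. Re-running the proof of \Cref{thm:main} will not give $19+10\theta$ for the whole quotient $G/N_i$, because the $\eps$-factor and the soluble bookkeeping are intrinsic to that argument.

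\textbf{The missing idea.} Never apply a diameter bound to the abelian sections at all. The paper compares growth only against nonabelian chief factors.
\begin{itemize}
\item If some $G/N_i$ has a nonabelian chief factor $T^n$, \Cref{lem:isolating-nonab-chief-factor} gives a monolithic quotient with socle $T^n$.
\item \Cref{lem:growth-in-monolithic} then gives $\len_X(T^n)\ll(\log|T^n|)^{19+10\theta}$. It is built from \Cref{prop:monolithic-nonabelian-socle}, the generalized Milnor lemma and \Cref{thm:dona}. No $\eps$ appears, because the socle is a product of nonabelian simple groups.
\item Compare this with $\gamma_X(n)\le C\exp(Cn^\beta)$, where $\beta<1/(19+10\theta)$. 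This bounds $|T^n|$, hence $|\Aut(T^n)|\le a$ with $a$ independent of $i$.
\item Let $H$ be the intersection of all subgroups of index at most $a$. It has finite index because $G$ is finitely generated.
\item $H$ centralizes every nonabelian chief factor of every $G/N_i$. So its images there are soluble, and $H$ is residually soluble.
\end{itemize}
The abelian sections, where $\eps$ lives, are then handled by Wilson's theorem (\Cref{thm:wilson}). Its proof goes through Grigorchuk--Lubotzky--Mann, not through diameter bounds. This is also why $\beta=1/(20+10\theta)$ must lie strictly below both $1/(19+10\theta)$ and $1/6$.
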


    In particular, we have the following notable corollary, which applies to many natural branch groups, for example all spinal groups with finite rooted and directed parts as defined in \cite{BGS-branch-groups-book}*{Chapter~2}.
    By \cite{BGS-branch-groups-book}*{Theorem~10.9}, this includes a rich class of groups of intermediate growth.

    \begin{corollary}
        \label[corollary]{cor:branch-groups}
        For $d \ge 1$ there is a constant $\beta = \beta(d) > 0$ such that the following holds.
        If $G = \gen X$ is a finitely generated group such that
        \begin{enumerate}
            \item $G$ acts faithfully on an infinite rooted $d$-regular tree, and
            \item $\gamma_X(n) \preceq \exp(n^\beta)$,
        \end{enumerate}
        then $G$ is virtually nilpotent.
        In fact we can take $\beta(d) = 1 / (\log d)^4$ for $d$ sufficiently large.
    \end{corollary}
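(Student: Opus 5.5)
Our plan is to deduce the corollary from \Cref{thm:gap-RF}. We check its hypothesis (1) with a value of $\theta$ depending only on $d$; we also need $\theta = O(\log d)$ to get the explicit value of $\beta(d)$.

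Let $\T$ be the rooted $d$-regular tree. Let $N_i$ be the kernel of the action of $G$ on the $i$-th level of $\T$, which has $d^i$ vertices. Each $N_i$ has finite index. Since the action is faithful, $\bigcap_i N_i = 1$. The quotient $G/N_i$ embeds in the iterated wreath product $S_d \wr \cdots \wr S_d$ with $i$ factors. Its composition factors are therefore sections of $S_d$. In particular every $T \in \nacomp(G/N_i)$ is a nonabelian simple section of $S_d$, so $|T| \le d!$ and $T$ has rank bounded in terms of $d$. There are only finitely many such $T$ up to isomorphism. Each has finite diameter, and $\log\log|T| \ge \log\log 60 > 0$. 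Hence $\theta_2(G/N_i) \le \theta(d) := \max_T \log\diam(T)/\log\log|T|$, a finite constant depending only on $d$. Applying \Cref{thm:gap-RF} with $\theta = \theta(d)$ and $\beta(d) = 1/(20+10\theta(d))$ gives the first assertion.

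For the explicit bound we need $\theta(d) \ll (\log d)^3$ or better, uniformly over nonabelian simple sections $T$ of $S_d$. Given such a bound, $20 + 10\theta(d) \le (\log d)^4$ for large $d$, which yields $\beta(d) = 1/(\log d)^4$. We expect this step to be the main obstacle, since it requires uniform diameter bounds for all simple sections of $S_d$. We control it using $\mu(T) \le d$: a simple section of $S_d$ has a faithful action of degree at most $d$. This is standard, for instance by passing to a minimal subgroup $H \le S_d$ surjecting onto $T$ and taking an orbit on which a composition factor isomorphic to $T$ acts nontrivially.

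Now apply \eqref{eq:helfgott--seress} to a transitive faithful action of $T$ of degree $m = \mu(T) \le d$. This gives
\[
    \log\diam(T) \ll (\log m)^4 \log\log m \le (\log d)^4\log\log d.
\]
This is slightly too weak, so we divide by $\log\log|T|$. For $T$ acting transitively of degree $m$ we have $|T| \ge m$, so $\log\log|T| \ge \log\log m$, which gives $\theta \ll (\log d)^4$. That is still off by a constant factor.

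To fix the constant, we split into cases. If $T = A_m$, then $\log\log|T| \asymp \log m$, and \eqref{eq:helfgott--seress} gives $\theta \ll (\log m)^3\log\log m$. If $T$ is classical with natural module $V$, then $\mu(T) \ge |V|^{c}$ and \eqref{eq:HMPQ} gives $\log\diam(T) \ll (\log \dim V)^2 \log|V| \ll (\log d)^3$. Meanwhile $\log\log|T| \gg \log\dim V$, so $\theta \ll (\log d)^2$. Bounded-rank and sporadic groups have $\theta = O(1)$ by the bounded-rank results. Together these give $\theta(d) = o((\log d)^4)$, and hence the stated $\beta(d)$ for $d$ sufficiently large.
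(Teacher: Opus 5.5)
Your proof is correct. The first part (level stabilizers $N_i$, composition factors of $G/N_i$ being sections of $S_d$, then \Cref{thm:gap-RF}) is exactly the paper's argument.

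For the explicit value of $\beta(d)$ you take a different route from the paper:

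\begin{itemize}
\item \textbf{The paper.} It sets $n=\mu(T)\le d$, views $T$ as a primitive simple subgroup of $S_n$, and applies Mar\'oti's theorem \cite{maroti}. Either $|T|\ll n^{c\log n}$, in which case the trivial bound $\diam(T)\le|T|$ already gives $\theta_2(T)\le\log|T|\ll(\log n)^2$. Or $T\cong A_n$, which is handled by \eqref{eq:helfgott--seress} exactly as you do.
\item \textbf{Your argument.} You split by the classification into alternating, classical, bounded-rank and sporadic groups. You invoke \eqref{eq:HMPQ} together with $\mu(T)\gg|V|^{c}$ for classical groups, and the Breuillard--Green--Tao/Pyber--Szab\'o results for bounded rank. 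This is valid, and your estimates check out: in the classical case you get $\theta\ll\log\dim V\cdot\log|V|\ll(\log d)^2$.
\end{itemize}

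The heavy machinery is unnecessary, though. Outside the alternating case, $\mu(T)\le d$ already makes $|T|$ quasi-polynomial in $d$:
\begin{itemize}
\item for classical $T$, $|V|\le d^{O(1)}$, so $\log|T|\ll(\log|V|)^2\ll(\log d)^2$;
\item for exceptional $T$, $|T|\le\mu(T)^{O(1)}$ by the Landazuri--Seitz bound $\mu(T)\ge|T|^{c/r}$.
\end{itemize}
So the trivial diameter bound suffices there too. Mar\'oti's theorem packages precisely this dichotomy in one citation.

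Your version gives sharper per-family bounds on $\theta_2$. These are irrelevant to $\beta(d)$, since the alternating case, with $\theta\ll(\log d)^3\log\log d$, dominates. The paper's version is shorter and avoids relying on any deep diameter results for groups of Lie type.
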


\section{Basic diametry}

The following easy lemmas are our most basic tools for bounding diameters. We will use them throughout the paper usually without explicit reference.

\begin{lemma}[chain rule]
    \label[lemma]{lem:chain-rule}
    For $X, Y, Z \subseteq G$ we have
    \[
        \len_X(Z) \le \len_X(Y) \len_Y(Z).
    \]
\end{lemma}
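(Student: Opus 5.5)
The plan is to substitute words directly, after disposing of the degenerate cases. Recall that $\len_X(g)$ is the minimal length of a word in $X \cup X^{-1}$ representing $g$, or $\infty$ if there is none, and that $\len_X(S) = \sup_{g \in S} \len_X(g)$. If $\len_X(Y) = \infty$ or $\len_Y(Z) = \infty$ there is nothing to prove. This is provided we adopt the convention $0 \cdot \infty$ appropriately; the only delicate subcase is $\len_X(Y) = 0$, meaning every $y \in Y$ equals $1$. Then $\gen{Y} = 1$, so $\len_Y(z) < \infty$ forces $z = 1$, and $\len_X(1) = 0$ via the empty word. Likewise, if $\len_Y(Z) = 0$ then $Z \subseteq \{1\}$ and the left side is $0$. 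So we may assume $a = \len_X(Y)$ and $b = \len_Y(Z)$ are finite positive integers, or handle $Z = \emptyset$ trivially since the supremum is then $0$.

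Fix $z \in Z$ and write $z = y_1^{\pm1} \cdots y_k^{\pm1}$ with $y_i \in Y$ and $k = \len_Y(z) \le b$. For each $i$ choose a word $w_i$ in $X \cup X^{-1}$ representing $y_i$, of length $\len_X(y_i) \le a$.

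The one observation needed is that inverting a word in $X \cup X^{-1}$ preserves its length: reverse the word and invert each letter. So $y_i^{-1}$ is represented by $w_i^{-1}$, of the same length. Concatenating the words $w_i^{\pm1}$ gives a word in $X \cup X^{-1}$ of length at most $ka \le ab$ representing $z$. Hence $\len_X(z) \le ab$, and taking the supremum over $z \in Z$ proves the lemma.

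There is no real obstacle here. The only care needed is the bookkeeping with inverses and with the infinite or zero cases of the supremum.
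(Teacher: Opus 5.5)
Your argument is correct. The paper states the chain rule without proof as one of its ``easy lemmas'', and your substitution argument is exactly the intended one: concatenate $X$-words for the $Y$-letters, handling inverses by reversing words. Your treatment of the degenerate cases is also right, with the convention $0 \cdot \infty = \infty$. That convention is genuinely needed: for example, take $X = Z = \{x\}$ with $x \ne 1$ and $Y = \{1\}$.
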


Let $B_X(r) = (X \cup X^{-1} \cup \{1\})^r$ denote the ball of radius $r$.
Generalizing the $\len_X$ notation, for any pair of subgroups $K \le H \le G$, let $\len_X(H/K)$ denote the smallest integer $r \ge 0$ such that $H \subseteq B_X(r) K$.

\begin{lemma}[extension rule]
    \label[lemma]{lem:extension-rule}
    Suppose $K \le L \le H \le G$. Then
    \[
        \len_X(H/K) \le \len_X(H/L) + \len_X(L/K).
    \]
\end{lemma}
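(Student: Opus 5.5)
The proof is a direct unwinding of the definition of $\len_X(H/K)$. Set $r = \len_X(H/L)$ and $s = \len_X(L/K)$, so that by definition
\[
    H \subseteq B_X(r) L \qquad\text{and}\qquad L \subseteq B_X(s) K .
\]
Substituting the second containment into the first gives
\[
    H \subseteq B_X(r) B_X(s) K .
\]

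It remains to observe that $B_X(r) B_X(s) \subseteq B_X(r+s)$. This holds because
\[
    B_X(r) = (X \cup X^{-1} \cup \{1\})^r,
\]
and a product of $r$ elements of $X \cup X^{-1} \cup \{1\}$ followed by $s$ more is a product of $r+s$ such elements. Hence $H \subseteq B_X(r+s) K$. Since $\len_X(H/K)$ is the smallest such radius, we conclude $\len_X(H/K) \le r + s$, which is the claim.

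The only point needing a moment's care is that the inclusion of $1$ in the generating alphabet makes the balls nested and closed under concatenation in the required way. Note also that we never use normality of $K$ or $L$: the containments are of right cosets, so only the chain $K \le L \le H$ is needed. If either length is infinite the inequality is trivial, so we may assume both are finite. There is no real obstacle here.
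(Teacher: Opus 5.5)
Your proof is correct and is exactly the paper's argument: from $H \subseteq B_X(r)L$ and $L \subseteq B_X(s)K$ you get $H \subseteq B_X(r)B_X(s)K = B_X(r+s)K$. One cosmetic point: $B_X(r)L$ is a union of cosets $bL$, which under the usual convention are left cosets, not right cosets, but nothing in the argument depends on this.
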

\begin{proof}
    If $r_1 = \len_X(H/L)$ and $r_2 = \len_X(L/K)$ then
    \[
        H \subseteq B_X(r_1) L \subseteq B_X(r_1) B_X(r_2) K = B_X(r_1 + r_2) K.\qedhere
    \]
\end{proof}

We often apply \Cref{lem:extension-rule} in the following iterated form.
Suppose $G = G_0 > G_1 > \cdots > G_L = 1$ and $X \subseteq G$. Then
\[
    \len_X(G) \le \sum_{i=1}^L \len_X(G_{i-1} / G_i).
\]

\begin{lemma}[Schreier]
    \label[lemma]{lem:schreier}
    Let $G = \gen X$ be a finite group and $N \nsgp G$.
    \begin{enumerate}
        \item $N = \gen Y$ where $\len_X(Y) \le 2 \diam(G/N) + 1$.
        \item $\diam(G) \le 4 \diam(N) \diam(G/N)$ provided $1 < N < G$.
    \end{enumerate}
\end{lemma}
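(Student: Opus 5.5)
For part (1) I will use the standard Schreier generators. Put $d = \diam(G/N)$ and let $T \subseteq G$ be a transversal for $N$ in $G$ with $1 \in T$ and $\len_X(t) \le d$ for every $t \in T$. Such a transversal exists because every coset $gN$ contains an element of $X$-length at most $\diam(G/N, XN/N) \le d$.

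For $t \in T$ and $x \in X$, let $\overline{tx} \in T$ be the representative of $txN$. The Schreier generators are
\[
    y_{t,x} = t x \overline{tx}^{-1} \in N,
\]
and each has length at most $d + 1 + d = 2d+1$. Let $Y$ be the set of these elements. To see that $Y$ generates $N$, take $n \in N$ and write it as a word $n = x_1^{\pm1} \cdots x_k^{\pm1}$. Set $t_0 = 1$ and let $t_j$ be the representative of $x_1^{\pm1}\cdots x_j^{\pm1} N$. Then
\[
    n = \prod_j t_{j-1} x_j^{\pm1} t_j^{-1},
\]
because $t_k = 1$ when $n \in N$. Each factor is either some $y_{t,x}$ or the inverse of one: for a negative exponent, $t x^{-1} \overline{tx^{-1}}^{-1} = (s x t^{-1})^{-1}$ with $s = \overline{tx^{-1}}$ and $\overline{sx} = t$. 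Hence $N = \gen Y$ and $\len_X(Y) \le 2d+1$.

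For part (2), let $g \in G$. Write $g = tn$ with $t \in T$ and $n \in N$, so $\len_X(t) \le d$. By part (1) and the chain rule (\Cref{lem:chain-rule}),
\[
    \len_X(n) \le \len_X(Y)\,\len_Y(n) \le (2d+1)\diam(N).
\]
Since $Y$ generates $N$, we have $\len_Y(n) \le \diam(N, Y) \le \diam(N)$. Adding the two contributions,
\[
    \len_X(g) \le d + (2d+1)\diam(N).
\]
It remains to show $d + (2d+1)D \le 4dD$ with $D = \diam(N)$. The hypothesis $1<N<G$ makes both $N$ and $G/N$ nontrivial, so $D \ge 1$ and $d \ge 1$. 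Then $d + D \le 2dD$, since $(d-1)(D-1) \ge 0$ gives $d + D \le dD + 1 \le 2dD$. Therefore
\[
    d + 2dD + D \le 2dD + 2dD = 4dD.
\]
The only delicate step is the inverse-letter case in the Schreier rewriting, which is handled by the identity above.
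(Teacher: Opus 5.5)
Your proof is correct and follows essentially the same route as the paper: choose a transversal of elements of $X$-length at most $\diam(G/N)$, take the Schreier generators (each of length at most $2\diam(G/N)+1$), and combine with the chain rule and the coset decomposition $g = tn$ for part (2). The only difference is that you prove the Reidemeister--Schreier rewriting and the final inequality $d + (2d+1)D \le 4dD$ explicitly, whereas the paper cites Schreier's lemma and asserts the inequality.
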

\begin{proof}
    (Cf.~\cite{BS92}*{Lemma~5.1} or \cite{wilson2}*{Lemma~2.1}.)
    Schreier's classical lemma states that if $T$ is a transversal to the right cosets of $N$ then
    \[Y = \{t_1 x t_2^{-1} : x \in X, t_1, t_2 \in T\} \cap N\]
    generates $N$. A transversal of minimal $X$-length is $T = B_X(\diam(G/N))$. For this choice, $\len_X(Y) \le 2 \diam(G/N) + 1$, which proves \emph{(1)}.
    Part \emph{(2)} follows from \emph{(1)} and the previous two lemmas:
    \begin{align*}
        \len_X(G)
        &\le \len_X(G/N) + \len_X(Y) \len_Y(N) \\
        &\le \diam(G/N) + (2 \diam(G/N) + 1) \diam(N)\\
        &\le 4 \diam(G/N) \diam(N).\qedhere
    \end{align*}
\end{proof}

\begin{remark}
    The previous lemma is valid without the normality hypothesis if $\diam(G/N)$ is understood as the diameter of the Schreier graph.
\end{remark}

\section{Soluble groups}
\label[section]{sec:soluble}

The key to the proof of \Cref{thm:main-soluble} is the following short argument.
We refer to it as the \emph{generalized Milnor lemma}, as it is similar to \cite{milnor}*{Lemma~1}.
A closely related lemma was also critical in the recent paper \cite{EM25} of the first two authors.

If $G$ is a finite group, the \emph{subgroup length} of $G$ is the largest integer $\sublen(G) = \lambda$ such that there is a chain of subgroups $1 = H_0 < H_1 < \cdots < H_\lambda = G$.
We will use only the trivial bound $\lambda(G) \le \log_2 |G|$.

\begin{lemma}[Generalized Milnor lemma]
    \label[lemma]{lem:generalized-Milnor}
    Let $G = \gen X$ be a group and let $N = \gen {Y^G} \nsgp G$ be a finite normal subgroup.
    Then $N = \gen Z$ where
    \[
        \len_X(Z) \le \len_X(Y) + 2 \sublen(N).
    \]
\end{lemma}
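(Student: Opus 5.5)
We build $Z$ greedily as a growing set of conjugates of elements of $Y$, adding at each stage a conjugate by a short word. Start with $Z_0 = Y$ and $H_0 = \gen{Z_0}$. If $H_i$ is not normalised by $G$, then some generator $x \in X$ does not normalise $H_i$. Since $N$ is finite, $x^{\pm1}H_i x^{\mp1} \le H_i$ is equivalent to equality, so in fact $x H_i x^{-1} \neq H_i$ and $x^{-1} H_i x \neq H_i$. We want to add a conjugate $z^{x^{\pm 1}}$ of some current generator $z \in Z_i$ that does not lie in $H_i$.

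The naive approach is to take $z \in Z_i$ with $z^x \notin H_i$ and set $Z_{i+1} = Z_i \cup \{z^x\}$. Then $H_{i+1} > H_i$ strictly, so there are at most $\sublen(N)$ steps. However, the new generator has length $\len_X(z) + 2$, and $z$ may itself be a previously added element. This makes lengths grow by $2$ per step along a chain, giving $\len_X(Y) + 2\sublen(N)$, which is exactly the claimed bound.

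To make this rigorous, I maintain the invariant that every element of $Z_i$ has $X$-length at most $\len_X(Y) + 2i$. This holds trivially at $i=0$. The inductive step holds because the new element is $x^{-1} z x$ with $\len_X(z) \le \len_X(Y) + 2(i-1)$, or at worst $\len_X(Y) + 2i$, so it has length at most $\len_X(Y) + 2(i+1)$. The chain $H_0 < H_1 < \cdots$ consists of strictly increasing subgroups of $N$, so it has length at most $\sublen(N)$. (The chain starts at $H_0$, which may be nontrivial, and this only helps; if $Y$ is trivial we can discard it.)

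When the process stops, $H = \gen{Z}$ is normalised by every $x \in X$, hence by $G = \gen X$, using finiteness to handle $x^{-1}$. Since $H$ contains $Y$ and lies in $N = \gen{Y^G}$, we get $H = N$. The only subtle point is existence at each step: if $H_i^x \neq H_i$, then some generator $z \in Z_i$ must satisfy $z^x \notin H_i$. Otherwise $H_i^x \le H_i$, and by finiteness equality would hold. I expect no real obstacle beyond this bookkeeping, namely the careful choice between $x$ and $x^{-1}$ and the use of finiteness of $N$ to pass from containment to equality.
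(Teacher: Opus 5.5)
Your proof is correct and is essentially the paper's argument: both build an ascending chain of subgroups of $N$ generated by conjugates of elements of $Y$, with generator lengths growing by at most $2$ per step, bound the number of strict steps by $\sublen(N)$, and conclude that the final subgroup is normal, hence equal to $N$. The only difference is that the paper proceeds breadth-first, taking $Z_i = \{y^g : y \in Y, g \in B_X(i)\}$ and stopping at the first $k$ with $\gen{Z_k} = \gen{Z_{k+1}}$, whereas your greedy version adds one conjugate per step, which incidentally also gives $|Z| \le |Y| + \sublen(N)$.
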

\begin{proof}
    Let
    \[
        Z_i = \{y^g : y \in Y, g \in B_X(i)\}, \qquad N_i = \gen{Z_i}.
    \]
    Then
    \[
        \gen Y = N_0 \le N_1 \le N_2 \le \cdots \le N = \gen{Y^G},
    \]
    so there is some $k \le \sublen(N)$ such that $N_k = N_{k+1}$.
    Then $N_k \nsgp G$, so $N_k = N$.
%    If $g \in X \cup X^{-1}$ then
%    \[
%        Z_k^g \subseteq Z_{k+1} \subseteq N_{k+1} = N_k.
%    \]
%    Thus $N_k^g \le N_k$, and since $X$ generates $G$ it follows that $N_k \nsgp G$, so $N_k = N$.
    Thus $N = \gen Z$ where $Z = Z_k$ and
    \[
        \len_X(Z) \le \len_X(Y) + 2k \le \len_X(Y) + 2 \sublen(N).\qedhere
    \]
\end{proof}

If a group $N$ is generated by a subset $Y$,
in general its derived subgroup $N'$ is not generated
by commutators of elements of $Y$.
Hence the following result has nontrivial content.

\begin{lemma}
    \label[lemma]{lem:gen-Milnor-N'}
    Let $G = \gen X$ and let $N = \gen Y \nsgp G$ be a normal subgroup with $N'$ finite.
    Then $N' = \gen Z$ where
    \[
        \len_X(Z) \le 4\len_X(Y) + 2 \sublen(N').
    \]
\end{lemma}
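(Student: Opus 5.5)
We reduce to \Cref{lem:generalized-Milnor}. Let $C = \{[y_1,y_2] : y_1, y_2 \in Y\}$. Each element of $C$ has $X$-length at most $4\len_X(Y)$, since a commutator $[y_1,y_2] = y_1^{-1}y_2^{-1}y_1y_2$ is a product of four elements each of length at most $\len_X(Y)$. The plan is to show that $N' = \gen{C^G}$, i.e., $N'$ is the normal closure in $G$ of the commutators of generators. Granting this, \Cref{lem:generalized-Milnor} applied to the finite normal subgroup $N' = \gen{C^G} \nsgp G$ (finite by hypothesis, normal since $N' $ is characteristic in $N \nsgp G$) gives $N' = \gen Z$ with
\[
    \len_X(Z) \le \len_X(C) + 2\sublen(N') \le 4\len_X(Y) + 2\sublen(N'),
\]
which is exactly the claim.

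The key step is therefore the identity $N' = \gen{C^G}$. Let $M = \gen{C^G}$. Clearly $M \le N'$ since $C \subseteq N'$ and $N' \nsgp G$. For the reverse, note $M \nsgp G$ and in particular $M \nsgp N$, and $M \le N$. In the quotient $N/M$ the images of the generators $Y$ pairwise commute, because $[y_1,y_2] \in M$. A group generated by pairwise commuting elements is abelian, so $N/M$ is abelian, hence $N' \le M$. One should double-check that inverses are harmless here: commutation of $y_1M$ and $y_2M$ implies commutation of their inverses too, so $\gen{YM/M}$ being generated by pairwise commuting elements suffices.

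I do not expect a serious obstacle; the only point needing care is that the normal closure must be taken in $G$ (not in $N$), which is precisely what \Cref{lem:generalized-Milnor} handles, conjugating by balls in $X$ rather than in $Y$ and thereby costing only an additive $2\sublen(N')$ rather than a multiplicative factor. The mild subtlety that $N$ itself need not be finite is irrelevant, since \Cref{lem:generalized-Milnor} only requires finiteness of the normal subgroup being generated, here $N'$.
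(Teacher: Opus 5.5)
Your proposal is correct and follows the same route as the paper: it takes the commutators of pairs of elements of $Y$, each of $X$-length at most $4\len_X(Y)$, as a set that normally generates $N'$, and then applies \Cref{lem:generalized-Milnor} to $N'$. You also check explicitly that these commutators normally generate $N'$, via the abelian quotient $N/M$. The paper asserts this without proof, and your argument for it is sound.
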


\begin{proof}
    Let $Z_0 = \{[y_1, y_2] : y_1, y_2 \in Y\}$. Then $\len_X(Z_0) \le 4 \len_X(Y)$ and $Z_0$ normally generates $N'$.
    Apply \Cref{lem:generalized-Milnor} to $N' = \gen{Z_0^G}$.
\end{proof}

The derived series of $G$ is defined as usual by $G^{(0)} = G$ and $G^{(i)} = (G^{(i-1)})'$ for $i \ge 1$. Recall that $G$ is soluble if and only if $G^{(i)} = 1$ for some index $i$. The minimal such $i$ is called the \emph{derived length} of $G$.

\begin{lemma}
    \label[lemma]{lem:Xi}
    Let $G = \gen X$ be a finite group. For each $i \ge 0$, there is a set $X_i \subseteq G^{(i)}$ such that
    \[
        G^{(i)} = \gen{X_i}, \qquad \len_X(X_i) \le 4^i \log_2 |G|.
    \]
\end{lemma}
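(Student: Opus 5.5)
We argue by induction on $i$, applying \Cref{lem:gen-Milnor-N'} at each step. For $i = 0$ we take $X_0 = X$; then $\len_X(X_0) \le 1 \le \log_2|G|$ as long as $|G| \ge 2$. If $G = 1$ the statement is trivial: take $X_i = \emptyset$, which has length $0$ under the convention that the supremum over the empty set is $0$.

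For the inductive step, suppose $X_{i-1} \subseteq G^{(i-1)}$ generates $G^{(i-1)}$ and satisfies $\len_X(X_{i-1}) \le 4^{i-1}\log_2|G|$. Since $G^{(i-1)}$ is characteristic in $G$, it is normal in $G$. We can therefore apply \Cref{lem:gen-Milnor-N'} with $N = G^{(i-1)}$ and $Y = X_{i-1}$. This gives a generating set $X_i := Z$ of $N' = G^{(i)}$ with
\[
    \len_X(X_i) \le 4\cdot 4^{i-1}\log_2|G| + 2\sublen(G^{(i)}).
\]
Using the trivial bound $\sublen(G^{(i)}) \le \log_2|G^{(i)}|$, this becomes
\[
    \len_X(X_i) \le 4^i \log_2|G| + 2\log_2|G^{(i)}|.
\]

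The stated bound $4^i\log_2|G|$ leaves no room for the additive term $2\log_2|G^{(i)}|$, so I would strengthen the induction hypothesis to
\[
    \len_X(X_i) \le c_i \log_2|G|,
\]
where the constants satisfy $c_0 = 1$ and $c_i = 4c_{i-1} + 2$. Solving the recurrence gives $c_i = \tfrac{5\cdot 4^i - 2}{3}$, which is at most $2\cdot 4^i$ but exceeds $4^i$ for $i \ge 1$. To recover exactly $4^i$ I would sharpen the bookkeeping. Write $a_i = \log_2|G^{(i)}|$. Then $\len_X(X_i) \le 4\len_X(X_{i-1}) + 2a_i$, and $a_i \le a_{i-1} - 1$ whenever $G^{(i)} \neq G^{(i-1)}$; when they are equal we may simply set $X_i = X_{i-1}$ with no cost. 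Unrolling the recurrence yields
\[
    \len_X(X_i) \le 4^i + 2\sum_{j=1}^{i} 4^{i-j} a_j.
\]
To compare this with $4^i a_0$, I would use that a proper subgroup has index at least $2$, so $a_j \le a_0 - j$. With $a_0 = \log_2|G|$, the desired inequality reduces to
\[
    4^i + 2\sum_{j=1}^{i} 4^{i-j}(a_0 - j) \le 4^i a_0.
\]
The terms in $a_0$ contribute $\tfrac{2}{3}(4^i - 1)\,a_0$, which is less than $4^i a_0$, and the remaining slack $\tfrac{1}{3}(4^i+2)\,a_0$ should be enough to absorb the leftover term $4^i$. Checking this is routine once $a_0 \ge 3$ or so, and small groups can be handled directly.

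The main obstacle is exactly this constant bookkeeping: a naive induction produces $2\cdot 4^i$ rather than $4^i$. If the sharpened estimate fails in some edge case, the fallback is to accept the constant $2\cdot 4^i$, which only changes constants in \Cref{thm:main-soluble}. A cleaner alternative is to use the tighter count $\len_X(Z) \le \len_X(Y) + 2k$, where $k$ is the number of strict steps in the Milnor chain inside $G^{(i)}$. Since $k \le \sublen(G^{(i)})$, this adds at most $2\log_2|G^{(i)}|$, and summing these contributions geometrically stays below the slack of $4^i\log_2|G|$.
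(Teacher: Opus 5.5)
Your proposal is correct and is essentially the paper's argument: iterate \Cref{lem:gen-Milnor-N'} starting from $X_0 = X$, and unroll the recurrence $\len_X(X_i) \le 4\len_X(X_{i-1}) + 2\sublen(G^{(i)})$ as a geometric sum, keeping the initial contribution $\len_X(X_0) \le 1$ separate from $\log_2|G|$. The paper closes the bookkeeping more simply: it bounds every additive term by $2\sublen(G') \le 2\log_2|G'|$ and uses $1 + \tfrac23\log_2|G'| \le \log_2|G|$ for $G \ne 1$ (either $|G:G'| \ge 2$, or $G$ is perfect of order at least $60$), which avoids your stabilization case; in any event your inequality $4^i + 2\sum_{j=1}^{i} 4^{i-j}(a_0 - j) \le 4^i a_0$ holds for every $a_0 \ge 1$ (at $a_0 = 1$ it is equivalent to $3i+1 \le 4^i$), so neither the small-group check nor the fallback constant $2\cdot 4^i$ is needed.
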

\begin{proof}
    Applying \Cref{lem:gen-Milnor-N'} inductively starting with $X_0 = X$ produces a generating set $X_i$ for $G^{(i)}$ such that
    \[
        \len_X(X_i) \le 4 \len_X(X_{i-1}) + 2 \sublen(G').
    \]
    Rewriting this inequality in the telescopic form
    \[
        \frac{\len_X(X_i)}{4^i} - \frac{\len_X(X_{i-1})}{4^{i-1}} \le \frac 2{4^i} \sublen(G'),
    \]
    we deduce that
    \[
        \len_X(X_i) / 4^i \le 1 + \frac23 \sublen(G')
        \le 1 + \frac23 \log_2 |G'|
        \le \log_2|G|
    \]
    for $G \ne 1$.
\end{proof}

\begin{lemma}
    \label[lemma]{lem:diam-abelian}
    If $A$ is a finite abelian group then
    \[
        \diam(A) \le \exp(A) \sublen(A) \le \exp(A) \log_2 |A|.
    \]
\end{lemma}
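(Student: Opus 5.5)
Fix a generating set $X$ of $A$. The plan is to use a chain of subgroups built from $X$ itself, together with the extension rule (\Cref{lem:extension-rule}). Since $A$ is abelian, every element of $A$ can be written as $x_1^{a_1}\cdots x_k^{a_k}$ with $x_j \in X$. The task is to show that only about $\sublen(A)$ generators are really needed, and that each exponent can be kept small.

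First we reduce to a small generating subset. Choose $x_1, x_2, \dots \in X$ greedily so that each $x_j$ lies outside $H_{j-1} = \gen{x_1,\dots,x_{j-1}}$. This gives a strictly increasing chain $1 = H_0 < H_1 < \cdots < H_k = A$. Hence $k \le \sublen(A)$, and $\{x_1,\dots,x_k\}$ generates $A$.

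Next we bound each step of the chain. Since $A$ is abelian, $H_j = \gen{x_j} H_{j-1}$, so every element of $H_j$ has the form $x_j^{a} h$ with $h \in H_{j-1}$. Here $a$ can be taken with $|a| \le \exp(A)/2$, or simply $0 \le a < \exp(A)$. Using inverses, $x_j^a$ lies in $B_X(\floor{\exp(A)/2}) \subseteq B_X(\exp(A))$. Thus $\len_X(H_j/H_{j-1}) \le \exp(A)$.

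Finally we sum along the chain. The iterated form of \Cref{lem:extension-rule} gives
\[
\len_X(A) \le \sum_{j=1}^k \len_X(H_j/H_{j-1}) \le k\exp(A) \le \exp(A)\sublen(A).
\]
Since $X$ was arbitrary, this bounds $\diam(A)$. The second inequality follows from the trivial bound $\sublen(A) \le \log_2|A|$, since each strict inclusion at least doubles the order. There is no real obstacle here. The only point to check is that the greedy chain uses elements of $X$ itself, so that each step costs a single cyclic power and not a longer word.
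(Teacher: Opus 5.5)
Your proof is correct and is essentially the paper's argument. The paper passes to a minimal generating subset of $X$ and notes that it has at most $\sublen(A)$ elements. It then writes every element as $\prod_{x} x^{e_x}$ with $0 \le e_x < \exp(A)$. Your greedy chain together with the extension rule is the same reasoning in slightly different packaging, and your remark that the exponents can be taken with $|a| \le \exp(A)/2$ gives a marginally sharper constant.
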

\begin{proof}
    Let $X$ be a generating set for $A$. We may assume $X$ is minimal. Then $|X| \le \sublen(A)$ and $A$ is the set of products $\prod_{x \in X} x^{e_x}$ with $0 \le e_x < \exp(A)$. Every such product has length at most $\exp(A) |X| \le \exp(A) \sublen(A)$.
\end{proof}

\begin{remark}
    In fact if $A \cong C_{d_1} \times \cdots \times C_{d_n}$ where $d_1 \mid \cdots \mid d_n$ then
    \[
        \diam(A) = \sum_{i=1}^n \floor{d_i / 2}.
    \]
    See \cite{wilson1}*{Section~3}.
\end{remark}

We can now prove our main theorem for soluble groups.

\begin{proof}[Proof of \Cref{thm:main-soluble}]
    Let $G = \gen X$ be a finite soluble group of derived length $L$.
    By a result of Glasby~\cite{glasby},
    \[L < 3\log_2\log_2|G| + 9.\]
    By \Cref{lem:Xi}, $G^{(i)} = \gen {X_i}$ where $\len_X(X_i) \le 4^i \log_2|G|$. Applying \Cref{lem:chain-rule,lem:extension-rule,lem:diam-abelian},
    \begin{align*}
        \len_X(G)
        &\le \sum_{i=0}^{L-1} \len_X(G^{(i)} / G^{(i+1)})
        \le \sum_{i=0}^{L-1} \len_X(X_i) \diam(G^{(i)} / G^{(i+1)}) \\
        &\le \sum_{i=0}^{L-1} 4^i \log_2|G| \exp(G^{(i)} / G^{(i+1)}) \log_2|G^{(i)} : G^{(i+1)}| \\
        &\le \eps_0 4^{L-1}  \log_2 |G| \sum_{i=0}^{L-1} \log_2|G^{(i)} : G^{(i+1)}| \\
        &= \eps_0 4^{L-1} (\log_2|G|)^2
        \le \eps_0 (4 \log_2|G|)^8.\qedhere
    \end{align*}
\end{proof}

\section{Generalized P\'alfy--Wolf}
\label[section]{sec:palfy-wolf}

If $T$ is a finite simple group (possibly abelian), we denote by $\mu(T)$ the minimal degree of a nontrivial permutation representation of $T$. Equivalently, \[\mu(T) = \min\{|T:H| : H < T\}.\]
Now if $G$ is an arbitrary finite group with composition series
\[
    G = G_0 > G_1 > \cdots > G_\ell = 1,
\]
with quotients $T_i = G_{i-1} / G_i$ for $1 \le i \le \ell$ (the composition factors, with multiplicity),
we define
\[
    \mucf(G) = \prod_{i=1}^\ell \mu(T_i).
\]
Note that $\mucf(G) = |G|$ if (and only if) $G$ is soluble.
Let us also define $\mathcal{A} = \{i : T_i~\text{is abelian}\}$ and
\[
    \muab(G) = \prod_{i \in \mathcal{A}} \mu(T_i) = \prod_{i \in \mathcal{A}} |T_i|,
    \qquad
    \muna(G) = \prod_{i \notin \mathcal{A}} \mu(T_i).
\]
Then
\begin{equation}
    \label{eq:muab-muna}
    \mucf(G) = \muab(G) \muna(G),
\end{equation}
and, by the theorem of Pyber quoted in the introduction, if $G \le S_n$ is primitive then
\begin{equation}
    \label{eq:pyber}
    \muab(G) \le 24^{-1/3} n^c,
\end{equation}
where \(c = 1 + c_0 = 1 + \log_9(48 \cdot 24^{1/3}) \approx 3.44\)
as in P\'alfy--Wolf.

The following two basic lemmas hold for each of $\mucf, \muab, \muna$, but we only need the versions for $\muna$.

\begin{lemma}
    \label[lemma]{lem:mucf-ext}
    If $G$ is a finite group and $N\nsgp G$ then
    \[\muna(G) = \muna(N) \muna(G/N).\]
\end{lemma}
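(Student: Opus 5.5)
The plan is to appeal to the Jordan--H\"older theorem, which makes $\muna$ well defined. The definition of $\muna(G)$ is a product over a chosen composition series. Jordan--H\"older says that any two composition series of $G$ have the same multiset of composition factors up to isomorphism. Since $\mu(T)$ depends only on the isomorphism type of $T$, and so does the property of $T$ being abelian, $\muna(G)$ equals $\prod_T \mu(T)$, taken over the multiset of nonabelian composition factors $T$ of $G$. In particular it does not depend on the chosen series. I would first record this remark explicitly, since the definition as written refers to one fixed series.

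Next, given $N \nsgp G$, I build a composition series of $G$ through $N$. I start from a composition series $N = N_0 > N_1 > \cdots > N_k = 1$ of $N$ and a composition series $G/N = \bar G_0 > \cdots > \bar G_m = 1$ of $G/N$. Taking full preimages $G_j \ge N$ of the $\bar G_j$ under $G \to G/N$, the correspondence theorem shows that $G_{j-1}/G_j \cong \bar G_{j-1}/\bar G_j$ is simple and that each $G_j \nsgp G_{j-1}$. Concatenating the two series gives
\[
    G = G_0 > \cdots > G_m = N = N_0 > \cdots > N_k = 1,
\]
which is a composition series of $G$. Its factors are exactly the composition factors of $G/N$ followed by those of $N$, with multiplicity.

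Finally, I take the product of $\mu(T_i)$ over the nonabelian factors of this concatenated series. It splits as the product over the factors coming from $G/N$ times the product over the factors coming from $N$, which is $\muna(G/N)\muna(N)$. By the first step this product equals $\muna(G)$.

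I do not expect a genuine obstacle: the only point needing care is the well-definedness via Jordan--H\"older, and the rest is bookkeeping.
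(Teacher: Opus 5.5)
Your proof is correct and is essentially the standard argument; the paper states this lemma without proof. The argument it takes for granted is exactly yours: Jordan--H\"older makes $\muna$ well defined, and $\muna$ splits over a composition series of $G$ passing through $N$, formed by lifting a series of $G/N$ and appending one of $N$ (with the empty product $1$ covering $N=1$ or $N=G$).
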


\begin{lemma}
    \label[lemma]{lem:mucf-subdirect}
    If $G$ is a subdirect subgroup of $G_1 \times \dots \times G_t$, then
    \[
        \muna(G) \le \prod_{i=1}^t \muna(G_i).
    \]
\end{lemma}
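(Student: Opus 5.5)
The plan is to reduce the statement to \Cref{lem:mucf-ext} by exhibiting a normal subgroup $K \nsgp G$ for which the same invariants are easy to compare. For each $i$ let $\pi_i \colon G \to G_i$ be the (surjective) projection and put $K_i = \ker \pi_i \cap G$, so that $K_1 \cap \cdots \cap K_t = 1$. Induction on $t$ will suffice. The case $t=1$ is trivial, since then $G = G_1$.

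For $t \ge 2$, let $H$ be the image of $G$ in $G_1 \times \cdots \times G_{t-1}$. This $H$ is a subdirect subgroup of $G_1 \times \cdots \times G_{t-1}$, and the kernel of $G \to H$ is $K = G \cap (1 \times \cdots \times 1 \times G_t)$. The subgroup $K$ is normal in $G$, and it embeds into $G_t$ via $\pi_t$. Since $\pi_t(G) = G_t$ and $K \nsgp G$, the image $\pi_t(K)$ is normal in $G_t$. By \Cref{lem:mucf-ext} we get
\[
\muna(G) = \muna(K)\,\muna(H),
\]
and by induction $\muna(H) \le \prod_{i<t} \muna(G_i)$. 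It therefore remains to show $\muna(K) \le \muna(G_t)$. This holds because $K \cong \pi_t(K) \nsgp G_t$, so
\[
\muna(G_t) = \muna(\pi_t(K))\,\muna(G_t/\pi_t(K)) \ge \muna(K),
\]
using \Cref{lem:mucf-ext} again together with the trivial fact that $\muna \ge 1$, since every $\mu(T_i) \ge 1$.

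The only point needing care is \Cref{lem:mucf-ext} itself, if it is not already taken as established. That lemma requires $\muna$ to be well defined, which follows from Jordan--H\"older: the composition factors with multiplicity are independent of the chosen series. Given that, concatenating a composition series of $N$ with the preimage of one for $G/N$ yields a composition series of $G$, so the product splits as stated. I expect no real obstacle. The key observation is that normality of $K$ in $G$ transfers to normality of its image in $G_t$ through surjectivity of $\pi_t$, which is what lets \Cref{lem:mucf-ext} be applied inside $G_t$ as well as inside $G$.
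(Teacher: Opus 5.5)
Your proof is correct and is essentially the paper's argument. You peel off one factor: you use $G \cap G_t$ where the paper uses $G \cap G_1$. You apply \Cref{lem:mucf-ext} in $G$, handle the quotient by induction, and bound the kernel's contribution using its normality in the remaining factor $G_t$.
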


\begin{proof}
    We may assume $t > 1$.
    Let $N = G \cap G_1$.
    Then $N \nsgp G$ and $G / N$ is subdirect in $G_2 \times \cdots \times G_t$, so by \Cref{lem:mucf-ext} and induction we have
    \[
        \muna(G) = \muna(N) \muna(G/N) \le \muna(N) \muna(G_2) \cdots \muna(G_t).
    \]
    Also, $N \nsgp G_1$ since $G$ is subdirect, so $\muna(N) \le \muna(G_1)$.
\end{proof}

We need the following standard result on the structure of primitive linear groups.
The main ideas are due to Suprunenko~\cites{sup1,sup2}, though Suprunenko only considered soluble groups;
more recent work tends to refer to Aschbacher~\cite{aschbacher} for much the same information.
The statement below can be found for example in \cite{Tracey}*{Section~2.3} with suitable references.

\begin{theorem}[Suprunenko]
    \label[theorem]{thm:suprunenko}
    Let $G \le \GL_n(p)$ be a primitive linear group, where $n\ge 1$ and $p$ is a prime.
    Then there is a divisor $r \mid n$ and a subgroup $H\nsgp G$ such that $G/H$ is cyclic of order $n/r$,
    $Z(H)$ is cyclic of order dividing $p^r-1$,
    and the generalized Fitting subgroup $F = F^*(H)$ is a central product
    \[
        F = Z(H) \circ E_1 \circ \cdots \circ E_m \circ U_1 \circ \cdots \circ U_s,
    \]
    where
    \begin{enumerate}[(i)]
        \item for $1 \le i \le m$, $E_i\nsgp H$ is an extraspecial group of order $p_i^{1+2n_i}$ for some prime divisor $p_i$ of $p^r - 1$ and some $n_i \ge 1$,
        \[
            p_i^{2n_i} \le H / C_H(E_i) \le p_i^{2n_i} \cdot \Sp_{2n_i}(p_i),
        \]
        and the projection of $H / C_H(E_i)$ to $\Sp_{2n_i}(p_i)$ is completely reducible;
        \item for $1 \le j \le s$, $U_j\nsgp H$ is a central product of some $u_j\ge 1$ copies of a quasisimple group $Q_j = Z(Q_j) \cdot T_j$,
        \[
            T_j^{u_j}\le H/C_H(U_j)\le \Aut(T_j)\wr \Sym(u_j),
        \]
        the projection of $H/C_H(U_j)$ to $\Sym(u_j)$ is transitive,
        and $Q_j$ has an irreducible representation $Q_j \to \SL_{t_j}(p)$ for some $t_j \ge 2$;
        \item the product $p_1^{n_1} \cdots p_m^{n_m} \cdot t_1^{u_1} \cdots t_s^{u_s}$ is a divisor of $n/r$.
    \end{enumerate}
\end{theorem}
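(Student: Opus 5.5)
The plan is to run the classical Clifford-theoretic analysis of primitive linear groups. Throughout, $V = \F_p^n$ is the natural module, and primitivity means that $V$ is not induced from any proper subgroup. The starting point is Clifford's theorem: for every $N \nsgp G$, the restriction $V|_N$ is \emph{homogeneous}. Indeed, if it were not, the homogeneous components would form a system of imprimitivity for $G$. I will apply this homogeneity over and over.

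\textbf{Step 1: the field and the subgroup $H$.}
\begin{itemize}
\item Let $A$ be a normal abelian subgroup of $G$ that is maximal among those whose $\F_p$-span $K = \F_p[A] \subseteq \mathrm{End}(V)$ is a field. Homogeneity of $V|_A$ shows that $\F_p[A]$ is a field for every normal abelian $A$, so such an $A$ exists. Then $K \cong \F_{p^r}$, $r \mid n$, and $V$ is a $K$-space of dimension $n/r$.
\item Set $H = C_G(K)$. This is the set of elements acting $K$-linearly, and it is normal in $G$.
\item Conjugation gives an embedding of $G/H$ into $\mathrm{Gal}(K/\F_p)$, so $G/H$ is cyclic. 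The required order relation has to be matched with the normalization in \cite{Tracey}.
\item $Z(H)$ acts by $K$-scalars, so $Z(H)$ embeds in $K^\times$ and is cyclic of order dividing $p^r - 1$.
\end{itemize}
The point of choosing $A$ (equivalently $K$) maximal is the following property: \emph{every characteristic abelian subgroup of $H$ acts by $K$-scalars}. Such a subgroup is normal in $G$. Adjoining it to $A$ gives a larger field unless it already lies in $K$, so by maximality it is central in $H$ and cyclic.

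\textbf{Step 2: structure of $F^*(H)$.}
\begin{itemize}
\item \emph{Fitting part.} For each prime $q$, every characteristic abelian subgroup of $O_q(H)$ is cyclic. By P. Hall's classification of such $q$-groups, $O_q(H)$ is a central product of a cyclic group with an extraspecial group; the dihedral, semidihedral and quaternion exceptions must be excluded or absorbed using the scalar action. The same argument rules out a nontrivial $O_p(H)$ outside the scalars. This yields $F(H) = Z(H) \circ E_1 \circ \cdots \circ E_m$ with $|E_i| = p_i^{1+2n_i}$.
\item \emph{The primes $p_i$.} Each $p_i$ divides $p^r - 1$, because $Z(E_i)$ has order $p_i$ and sits inside $K^\times$.
\item \emph{Components.} The layer $E(H)$ is a product of components. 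Group the $H$-orbits of components into $U_j = Q_j \circ \cdots \circ Q_j$, with $u_j$ factors in the $j$-th orbit. The projection of $H/C_H(U_j)$ to $\Sym(u_j)$ is then transitive by construction, and $T_j^{u_j} \le H/C_H(U_j) \le \Aut(T_j) \wr \Sym(u_j)$ is standard.
\end{itemize}

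\textbf{Step 3: the action on $E_i$.}
\begin{itemize}
\item $H/C_H(E_i)$ contains $\mathrm{Inn}(E_i) \cong p_i^{2n_i}$.
\item It embeds in the centralizer of $Z(E_i)$ in $\Aut(E_i)$, which is $p_i^{2n_i}.\Sp_{2n_i}(p_i)$.
\item For complete reducibility of the image in $\Sp_{2n_i}(p_i)$, I would argue as follows. A nontrivial $O_{p_i}$ of the image would pull back to a normal $p_i$-subgroup of $H$ strictly containing $E_i$. Combining it with $E_i$ would then give a characteristic abelian noncyclic subgroup, or would contradict the maximality of $F(H)$. Hence the image has trivial $O_{p_i}$ and acts completely reducibly on $E_i/Z(E_i)$.
\end{itemize}

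\textbf{Step 4: the divisibility (iii).}
\begin{itemize}
\item By homogeneity, $V|_{F^*(H)}$ is a sum of copies of a single irreducible $KF^*(H)$-module $W$.
\item Since the factors of $F^*(H)$ commute and share the scalars, $W$ is a tensor product over $K$ (after an extension of scalars) of faithful irreducibles of the factors. For $E_i$ these have dimension $p_i^{n_i}$. For $U_j$ they are a tensor power of $u_j$ copies of a $t_j$-dimensional module of $Q_j$, with $t_j \ge 2$ because $Q_j$ is nonabelian.
\item Hence $p_1^{n_1} \cdots t_s^{u_s}$ divides $\dim_K W$, which divides $n/r$.
\end{itemize}

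\textbf{Main obstacle.} I expect the hardest part to be Steps 1 and 4 together: keeping the scalar field consistent. The irreducible constituents over $\F_p$ may fail to be absolutely irreducible. So one must check that $K$ is exactly the splitting field needed, so that tensor decompositions hold over $K$ without further field extension. This is where the maximal choice of $A$ must be used carefully. It is also why the statement places $t_j$ in $\SL_{t_j}(p)$ up to this bookkeeping.
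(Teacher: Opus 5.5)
The paper does not prove this statement. It quotes it from Tracey (Section~2.3), following Suprunenko and Aschbacher. Your route is the standard argument behind those references: Clifford homogeneity, a maximal abelian normal subgroup $A$ with $K=\F_p[A]$ and $H=C_G(K)$, P.~Hall's theorem applied to $O_q(H)$, the layer, and a tensor decomposition of a constituent of $V|_{F^*(H)}$. Your Steps~1 and~2 are essentially sound. Step~3, however, has a genuine gap, in two places.

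\emph{First}, the preimage in $H$ of a normal $p_i$-subgroup of $H/C_H(E_i)$ is not a $p_i$-group. It contains $C_H(E_i)$, hence the other $E_k$ and the $U_j$, so it gives no immediate contradiction with the structure of $O_{p_i}(H)$.

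\emph{Second}, even granting that the image has trivial $O_{p_i}$, this does not imply complete reducibility in characteristic $p_i$. For instance, $S_3$ acts faithfully but not completely reducibly on $\F_2^2\oplus\F_2[S_3/A_3]$.

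What works is the commutator form combined with the maximality of $A$. Put $P=O_{p_i}(H)$, $\bar P=P/Z(P)$ and $M=\soc_H(\bar P)$. Since $P$ is characteristic in $H\nsgp G$, both $M$ and $M\cap M^\perp$ are $G$-invariant. The preimage of $M\cap M^\perp$ is then an abelian normal subgroup of $G$ contained in $C_G(K)$, hence contained in $A\le Z(H)$. So $M\cap M^\perp=0$. Then $M^\perp$ is an $H$-submodule meeting the socle trivially, so $M^\perp=0$ and $\bar P$ is semisimple.

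Relatedly, you never justify $E_i\nsgp H$, which Step~3 presupposes. For odd $p_i$, take $E_i=\Omega_1(O_{p_i}(H))$. For $p_i=2$ with $4\mid|Z(H)|$ it can fail. In $N_{\GL_4(p)}(4\circ2^{1+4})$ with $p\equiv1\pmod4$, every admissible $H$ induces at least $\Sp_4(2)'\cong A_6$ on $O_2(H)/Z(O_2(H))\cong\F_2^4$. Such an $H$ fixes no quadratic form, hence normalizes no extraspecial $E$ with $O_2(H)=Z(O_2(H))\circ E$. There you have to work with $O_2(H)$ itself, for which $H/C_H(O_2(H))\le2^{2n_i}.\Sp_{2n_i}(2)$ still holds.

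Your proposed resolution of the field issue in Step~4 is also false: maximality of $A$ does not make $K$ a splitting field for $F^*(H)$. Take $p\ge5$, $q=p^2$, and $G=\SL_2(q)\circ\SL_2(q)$ acting on $N_1\otimes_{\F_q}N_2$ (the tensor product of the natural modules), regarded as $V=\F_p^8$.
\begin{itemize}
\item The module is irreducible, since it is not isomorphic to its Frobenius twist.
\item $G$ is primitive, since it has no proper subgroup of index at most $8$.
\item Its abelian normal subgroups are central, so your $K$ is $\F_p$, while $\mathrm{End}_G(V)=\F_q$.
\end{itemize}
Worse, (ii)--(iii) as printed fail here for every admissible $r$ and $H$. $G$ is perfect, so $H=G=F^*(H)$, with two $H$-invariant components ($s=2$, $u_1=u_2=1$). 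But $\SL_2(p^2)$ has no nontrivial irreducible $\F_p$-representation of degree less than $4$, so $t_1t_2\ge16$ cannot divide $8$.

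The provable version measures $t_j$ over $L=\mathrm{End}_{F^*(H)}(W)\cong\F_{p^d}$, where $W$ is the irreducible constituent of $V|_{F^*(H)}$. Then $K\subseteq L$. Each tensor factor of $W\otimes_L\overline{\F}_p$ is the unique irreducible constituent of the restriction to its own factor, so it is $\mathrm{Gal}(\overline{\F}_p/L)$-stable and hence realizable over $L$. This gives $Q_j\to\SL_{t_j}(p^d)$, and $\prod_i p_i^{n_i}\prod_j t_j^{u_j}=\dim_L W$ divides $n/d$, hence divides $n/r$.

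Similarly, in Step~1 do not try to match ``order $n/r$''. Your embedding $G/H\hookrightarrow\mathrm{Gal}(K/\F_p)$ gives order dividing $r$, which is the correct statement. For $\GL_3(5)$, no admissible $r,H$ has $|G/H|=n/r$, and the paper only uses that $G/H$ is abelian.
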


We are going to bound $\muna(G)$ for primitive groups.
We actually prove a more general statement which is suitable to an inductive argument.

\begin{theorem}
    \label[theorem]{thm:muna-bounds}
    Let $b_1 = 5^{1/4} \approx 1.495$ and $c_1 = \log_8(7b_1) \approx 1.129$. Let $n \ge 1$.
    \begin{enumerate}
        \item If $G \le S_n$ is primitive (with $n \ge 2$) then $\muna(G) \le b_1^{-1} n^{5/4}$.
        \item If $G \le S_n$ then $\muna(G) \le b_1^{n-1}$.
        \item If $G \le \GL_n(p)$ is completely reducible then $\muna(G) \le b_1^{-1} p^{c_1 n}$.
    \end{enumerate}
\end{theorem}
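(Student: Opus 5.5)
We prove (1), (2), (3) simultaneously by induction on $n$. Statement (1) is the main case; (2) and (3) are auxiliary statements feeding into it. Throughout, we use \Cref{lem:mucf-ext} and \Cref{lem:mucf-subdirect} to split $\muna$ along normal series and subdirect decompositions. We also use the elementary facts $\muna(T)=\mu(T)\le m$ whenever a simple $T$ embeds in $S_m$, and $\mu(T)\le |T|$ for nonabelian simple $T$.

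\emph{Proof of (2).} Let $G\le S_n$. If $G$ is intransitive with orbits of sizes $n_1,\dots,n_k$, then $G$ is subdirect in a product of transitive groups of degrees $n_i$, and $\prod b_1^{n_i-1}\le b_1^{n-1}$. If $G$ is transitive but imprimitive with blocks of size $a$ and $k=n/a$ blocks, let $K$ be the kernel of the action on blocks. Then $K$ is subdirect in a product of $k$ groups of degree $a$, and $G/K\le S_k$ is transitive, so
\[
\muna(G)\le b_1^{k(a-1)}\,b_1^{k-1}=b_1^{n-1}.
\]
If $G$ is primitive, we use (1) and check $b_1^{-1}n^{5/4}\le b_1^{n-1}$, i.e.\ $n^{5/4}\le b_1^{n}$. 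This fails for some small $n$ (with $b_1=5^{1/4}$ it becomes $n^5\le 5^n$, which holds for $n\ge 5$ and at $n=1$, but not for $2\le n\le 4$). Those degrees are handled directly: primitive groups of degree $\le 4$ are soluble, so $\muna=1$. This is presumably where the choice $b_1=5^{1/4}$ comes from.

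\emph{Proof of (3).} A completely reducible $G\le\GL_n(p)$ is subdirect in a product of irreducible groups, and the bound is multiplicative in the form $\prod b_1^{-1}p^{c_1n_i}\le b_1^{-1}p^{c_1n}$, since $b_1>1$. So we may assume $G$ is irreducible. An imprimitive irreducible group lies in $\GL_a(p)\wr S_k$, and the kernel on the blocks is subdirect in $k$ completely reducible groups (by Clifford's theorem). Hence
\[
\muna(G)\le b_1^{-k}p^{c_1n}\,b_1^{k-1}=b_1^{-1}p^{c_1n}
\]
by induction and (2). Irreducible but not absolutely irreducible groups embed in $\GL_{n/e}(p^e)$; we handle them either by working over $\F_{p^e}$ throughout, or via Suprunenko, where the cyclic part of order dividing $n/r$ is harmless. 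For primitive $G$ we apply \Cref{thm:suprunenko}. The cyclic quotients $G/H$ and $Z(H)$, together with the extraspecial groups, contribute nothing. In the sections $H/C_H(E_i)$, the nonabelian factors come from a completely reducible subgroup of $\Sp_{2n_i}(p_i)$, so by induction they contribute at most $b_1^{-1}p_i^{2c_1n_i}$. For each $U_j$, the section $H/C_H(U_j)$ contributes at most $\mu(T_j)^{u_j}\cdot|\mathrm{Out}(T_j)|$-type abelian factors (which do not count) times $\muna$ of a transitive subgroup of $S_{u_j}$, which is at most $b_1^{u_j-1}$. We have $\mu(T_j)\le$ something like $p^{c t_j}$ since $T_j$ has a $t_j$-dimensional representation. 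The group $C_H(F)\le F$ then finishes the composition series.

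The main obstacle is the final numerical inequality. We need
\[
\prod_i p_i^{2c_1n_i}\prod_j \bigl(\mu(T_j) b_1\bigr)^{u_j} \le b_1^{-1}p^{c_1n},
\]
using only $\prod p_i^{n_i}\prod t_j^{u_j}\mid n/r$ and $p_i\mid p^r-1$. The bound $\mu(T)\le p^{c_1 t}$ for quasisimple $T$ with a faithful $t$-dimensional representation over $\F_p$ requires a small case check of simple groups. Here $\mu(T)\le|T|$ is too weak, but $T\le \PGL_t(p)$ acting on the projective points gives $\mu(T)\le (p^t-1)/(p-1)$. The constant $c_1=\log_8(7b_1)$ indicates that the extremal case is $p_i=2$, $n_i=1$, i.e.\ $\Sp_2(2)$ acting with $p^r\ge 3$.

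\emph{Proof of (1).} For primitive $G\le S_n$ we use the O'Nan--Scott theorem. In the affine case, $n=p^d$, and $G_0$ is irreducible, hence completely reducible, so (3) gives $\muna(G)\le b_1^{-1}p^{c_1d}\le b_1^{-1}n^{5/4}$. In the almost simple, diagonal, and product action cases, $G\le H\wr S_k$ with $n=m^k$, where $H$ is primitive of degree $m$ or $\soc(G)=T^k$. We get $\muna(G)\le \muna(H)^k\, b_1^{k-1}$ using (2), or $\mu(T)^k b_1^{k-1}$ with $\mu(T)\le n^{1/k}$ or $\le |T|^{\cdots}$ in the diagonal case. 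Then $(b_1^{-1}m^{5/4})^k b_1^{k-1}=b_1^{-1}n^{5/4}$. The almost simple base case, $\muna(G)\le \mu(T)\le n$, is immediate.
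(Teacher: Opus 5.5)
\textbf{The primitive case of (3) is not proved.} Your treatment of (2) and of the reducible and imprimitive cases of (3) matches the paper. In the primitive case of (3), however, you stop at ``the main obstacle is the final numerical inequality''. This is the heart of the theorem: it is where $c_1$ is determined and where the bound is attained. Worse, the inequality you set as your target is false. Take $G=\SL_3(2)\le\GL_3(2)$, so $m=0$, $s=1$, $u_1=1$, $t_1=n=3$, $p=2$. Your left side is $\mu(\SL_3(2))\,b_1=7b_1$, while the right side is $b_1^{-1}2^{3c_1}=7$.

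This is exactly the case that fixes $c_1=\log_8(7b_1)$. It is not $\Sp_2(2)\cong S_3$, which is soluble and contributes nothing. Since the bound holds with equality there, you cannot discard the factors $b_1^{-1}$.

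The missing step is to bound each tensor factor of $F^*(H)$ separately by $b_1^{-1}p^{c_1a}$, where $a=p_i^{n_i}$ or $a=t_j^{u_j}$ is that factor's share of the dimension.
\begin{itemize}
\item For $E_i$: this follows from induction in $\Sp_{2n_i}(p_i)$ together with $p_i^{2n_i}<p^{p_i^{n_i}}$. That comparison fails only for $p=2$, $p_i^{n_i}=3$, where $\Sp_2(3)$ is soluble.
\item For $U_j$: this follows from $\mu(T_j)<p^{t_j}$ together with $p^{t_ju_j}b_1^{u_j-1}\le b_1^{-1}p^{c_1t_j^{u_j}}$. The gap between $t_ju_j$ and $t_j^{u_j}$ is what pays for the factor $b_1^{u_j-1}$ coming from the permutation part. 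The inequality holds except for $(t_j,p)\in\{(2,2),(2,3),(3,2),(4,2)\}$, which must be checked by hand; equality occurs at $\SL_3(2)$ with $u_j=1$.
\end{itemize}
You then multiply these bounds and keep a single $b_1^{-1}$, which is legitimate because $m+s\ge1$ once $H$ is nonabelian. Finally, use $\sum a\le\prod a\le n/r$ for integers $a\ge2$.

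\textbf{Induction order in the symplectic step.} Induction on $n$ alone does not cover this step. The chain $2n_i\le p_i^{n_i}\le n$ can be an equality: $\Sp_4(2)\cong S_6$ occurs inside primitive subgroups of $\GL_4(p)$ with $p$ odd. You then need (3) for $\GL_4(2)$ while proving it for $\GL_4(p)$. The paper therefore also inducts on $p$ at fixed $n$; alternatively, treat $\Sp_4(2)$ by hand.

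\textbf{Part (1).} Your O'Nan--Scott case analysis is a different route from the paper's. The paper avoids any case split by using the uniform estimate $\mu(T)^d\le n$ for the transitive socle $T^d$ (Kleidman--Liebeck 5.2.7(ii) together with $\mu(T)\le|T|^{1/2}$). This gives $\muna(G)=\mu(T)^d\muna(\bar G)\le nb_1^{d-1}\le b_1^{-1}n^{5/4}$, since $5^d\le n$. Your route can be made to work, but as written it has two holes:
\begin{itemize}
\item It does not address the twisted wreath type. This is easy, since there $n=|T|^k$.
\item The step $\muna(G\cap H^k)\le\muna(H)^k$ needs justification. $\muna$ is not monotone under passing to subgroups (e.g.\ $A_5^2<A_{10}$ has $\muna$ equal to $25>10$). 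You must use that the coordinate projections of $G\cap H^k$ are normal in the component $H$.
\end{itemize}
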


\begin{remark}
    All three bounds are sharp for infinitely many $n$. Let
    \[
        G_h = A_5 \wr \cdots \wr A_5
    \]
    be the iterated wreath product of $h$ copies of $A_5$ (with the natural action).
    Then
    \[
        \muna(G_h) = 5^{5^{h-1} + \cdots + 5 + 1} = 5^{(5^h - 1) / 4}.
    \]
    There is a natural faithful transitive action of $G_h$ on $n = 5^h$ points (imprimitive for $h > 1$), which shows that the bound in \emph{(2)} is sharp.
    There is also a faithful primitive action of $G_h = A_5 \wr G_{h-1}$ on $n = 5^{5^{h-1}}$ points,
    which shows that the bound in \emph{(1)} is sharp.
    Finally, the group $\GL_3(2) \wr G_h$ acts faithfully and irreducibly on $\F_2^n$ where $n = 3 \cdot 5^h$, and
    \[
        \muna(\SL_3(2) \wr G_h) = 7^{5^h} 5^{(5^h-1) / 4} = b_1^{-1} 2^{c_1 n},
    \]
    which shows that the bound in \emph{(3)} is sharp.
\end{remark}

\begin{proof}
    First let us describe the structure of the induction.
    In the proof of each statement we assume that \emph{(1)}--\emph{(3)} hold for all positive integers $m < n$.
    Additionally, in the proof of \emph{(2)} we assume that \emph{(1)} holds for $m = n$.
    Finally, in the proof of \emph{(3)} we assume that \emph{(1)} and \emph{(2)} hold for $m = n$,
    and also that \emph{(3)} holds for $m = n$ and all smaller primes $p' < p$.

    \emph{(1)} Let $G \le \Sym(\Omega)$ be primitive, where $|\Omega| = n$.

    \emph{Affine case.} If $G$ has affine type then $G \cong VH$ where $V = \F_p^d$ may be identified with $\Omega$ and $H \le \GL(V)$ is irreducible. Since $d < p^d = n$, by induction we have
    \[
        \muna(G) = \muna(H) \le b_1^{-1} p^{c_1 d} = b_1^{-1} n^{c_1} < b_1^{-1}n^{5/4}.
    \]

    \emph{Non-affine case.}
    Now assume that $\soc(G)$ is nonabelian, say $\soc(G) = T^d$ where $T$ is a nonabelian simple group.
    Then
    \[T^d \le G \le \Aut(T^d) \cong \Aut(T) \wr S_d.\]
    Since $G$ is primitive, $T^d$ is transitive on $\Omega$.
    By \cite{kleidman--liebeck}*{5.2.7(ii)} (and the fact that $\mu(T) \le |T|^{1/2}$ \cite{lev92}),
    it follows that
    \[5^d \le \mu(T)^d \le n.\]
    In particular $d < n$.
    Let $\bar G$ be the image of $G$ in $S_d$.
    Now $\Out(T) = \Aut(T) / T$ is soluble by the Schreier conjecture, so by induction
    \[
        \muna(G) = \mu(T)^d \muna(\bar G)
        \le n b_1^{d-1}
        \le b_1^{-1} n^{5/4}
    \]
    (with equality if $n = 5$).

    \emph{(2)} Let $G \le \Sym(\Omega)$ where $|\Omega| = n$. If $G$ is intransitive then $\Omega = \Omega_1 \cup \Omega_2$ where $\Omega_1, \Omega_2$ are nonempty $G$-invariant subsets.
    Let $n_i = |\Omega_i|$ and let $G_i$ be the group induced on $\Omega_i$ for $i = 1,2$. Then $G$ is subdirect in $G_1 \times G_2$, so we are done by \Cref{lem:mucf-subdirect} and induction:
    \[
        \muna(G) \le \muna(G_1) \muna(G_2) \le b_1^{n_1 - 1} b_1^{n_2-1} < b_1^{n-1}.
    \]

    Similarly, if $G$ is imprimitive, consider an imprimitivity system $\B = \{B_1, \dots, B_d\}$ for $G$, where $d \mid n$ and $1 < d < n$. Then $G$ acts on $\B$ with kernel $N \nsgp G$ say.
    Let $N_i$ be the group induced on $B_i$ by $N$ for $1 \le i \le d$.
    Then $N_i \cong N_1$ for each $i$ and $N$ is subdirect in $N_1 \times \cdots \times N_d$, so by \Cref{lem:mucf-ext,lem:mucf-subdirect} and induction we have
    \[
        \muna(G) \le \muna(N_1)^d \muna(G/N) \le (b_1^{n/d-1})^d b_1^{d-1} = b_1^{n-1}.
    \]

    Hence we may assume that $G$ is primitive. We may assume $n \ge 5$.
    Then by \emph{(1)} we have
    \[
        \muna(G) \le b_1^{-1} n^{5/4} \le b_1^{n-1}
    \]
    (again equality holds if $n = 5$).

    \emph{(3)} Now let $G \le \GL(V)$ be a completely reducible linear group, where $V = \F_p^n$.
    If $G$ is reducible then $V = U_1 \oplus U_2$ for some $G$-submodules $U_1, U_2 \le V$,
    and $G$ is subdirect in $G_1 \times G_2$ where $G_i = G |_{U_i}$ is the group induced by $G$ on $U_i$.
    By \Cref{lem:mucf-subdirect} and induction it follows that
    \[
        \muna(G) \le \muna(G_1) \muna(G_2) \le b_1^{-2} |U_1|^{c_1} |U_2|^{c_1} < b_1^{-1} |V|^{c_1}.
    \]

    Next suppose that $G$ is irreducible but imprimitive on $V$.
    Then there is a $G$-invariant decomposition $V = U_1 \oplus \cdots \oplus U_d$ such that $1 < d \mid n$, $\dim U_i = n / d$ for each $i$, and $G$ permutes $U_1, \dots, U_d$ transitively with kernel $N \nsgp G$ say.
    By Clifford's theorem, $V$ is a completely reducible $N$-module.
    Let $N_i = N|_{U_i}$ be the completely reducible linear group induced by $N$ on $U_i$.
    Then $N_i \cong N_1$ for each $i$ and $N$ is subdirect in $N_1 \times \cdots \times N_d$.
    Hence, by \Cref{lem:mucf-ext,lem:mucf-subdirect} and induction,
    \[
        \muna(G)
        = \muna(N) \muna(G/N)
        \le \br{b_1^{-1} |U_1|^{c_1}}^{d} b_1^{d-1}
        = b_1^{-1} |V|^{c_1}.
    \]

    Finally, assume that $G$ is primitive on $V$.
    By \Cref{thm:suprunenko}, there is a divisor $r \mid n$ and a subgroup $H \nsgp G$ such that $G/H$ is abelian and
    \[
        F = F^*(H) = Z(H) \circ E_1 \circ \cdots \circ E_m \circ U_1 \circ \cdots \circ U_s,
    \]
    and properties \emph{(i)--(iii)} hold
    (we use the same notation).
    Let $Z = Z(F)$.
    Since $F = F^*(H)$, we have $C_H(F)=Z$ and $H/Z$ is a subdirect subgroup of
    \begin{equation*}
        \prod_{i=1}^m \frac{H}{C_H(E_i)} \times \prod_{j=1}^s \frac{H}{C_H(U_j)}.
    \end{equation*}

    Consider an index $i \in \{1, \dots, m\}$.
    We have a representation
    \[
        \rho_i : H / C_H(E_i) \to \Sp_{2n_i}(p_i)
    \]
    with abelian kernel and completely reducible image, where $p_i \ne p$.
    Moreover we have $2n_i \le p_i^{n_i} \le n/r \le n$ and
    \[
        p_i^{2n_i} < p^{p_i^{n_i}} \le p^n,
    \]
    unless $p = 2$ and $p_i^{n_i} = 3^1$, a case we may ignore as $\Sp_2(3)$ is soluble.
    Thus either $2n_i < n$ or $2n_i = n$ and $p_i < p$, so induction applies to $\rho_i(H / C_H(E_i)))$ and
    \[
        \muna(H / C_H(E_i))% = \muna(\rho_i(H / C_H(E_i)))
        \le b_1^{-1} p_i^{2c_1 n_i}
        < b_1^{-1} p^{c_1 p_i^{n_i}}.
    \]

    Now consider an index $j \in \{1, \dots, s\}$.
    We have
    \[
        T_j^{u_j}\le H/C_H(U_j)\le \Aut(T_j)\wr S_{u_j}
    \]
    and there is a nontrivial irreducible representation
    \(
        Q_j \to \SL_{t_j}(p).
    \)
    In particular $\mu(T_j) < p^{t_j}$ by \cite{kleidman--liebeck}*{Proposition 5.2.1(i)} (see also \Cref{lem:simple-sections-sym-n}).
    Note also that $u_j < t_j^{u_j} \le n$.
    Now exactly as in the proof of \emph{(1)} we have
    \[
        \muna(H / C_H(U_j))
        \le \mu(T_j)^{u_j} b_1^{u_j-1}
        < p^{t_j u_j} b_1^{u_j - 1}.
    \]
    Now we check that
    \[
        p^{t_j u_j} b_1^{u_j - 1} \le b_1^{-1} p^{c_1 t_j^{u_j}},
    \]
    unless $(t_j, p) \in \{(2, 2), (2, 3), (3, 2), (4, 2)\}$.
    The first two cases we may ignore on the grounds that $\SL_{t_j}(p)$ is soluble.
    In the other two cases we may check directly that
    \[
        \mu(T_j) \le b_1^{-1} p^{c_1 t_j}, \qquad \mu(T_j)^{u_j} b_1^{u_j-1} \le b_1^{-1} p^{c_1 t_j^{u_j}}
    \]
    (equality holds if $T_j = \SL_3(2)$ and $u_j = 1$).
    Thus in all cases
    \[
        \muna(H / C_H(U_j)) \le b_1^{-1} p^{c_1 t_j^{u_j}}.
    \]

    Finally, note that $m + s \ge 1$ unless $H$ is abelian.
    Thus by \cref{lem:mucf-subdirect} and the property that $\prod_{i=1}^m p_i^{d_i} \prod_{j=1}^s t_j^{u_j} \le n/r$ we have
    \begin{align*}
        \muna(G) = \muna(H / Z)
        &\le \prod_{i=1}^m \muna(H / C_H(E_i)) \prod_{j=1}^s \muna(H / C_H(U_j))
        \\
        &\le b_1^{-1} p^{c_1 \sum_{i=1}^m p_i^{d_i} + c_1 \sum_{j=1}^s t_j^{u_j}}
        \\
        &\le b_1^{-1} p^{c_1 n},
    \end{align*}
    as claimed.
\end{proof}

We can now complete the proof of \Cref{thm:gen-palfy-wolf}, as well as \Cref{cor:explicit-BCP}.

\begin{proof}[Proof of \Cref{thm:gen-palfy-wolf}]
    Combine \eqref{eq:muab-muna}, \eqref{eq:pyber}, and \Cref{thm:muna-bounds}\emph{(1)}.
\end{proof}

\begin{proof}[Proof of \Cref{cor:explicit-BCP}]
    From \cite{ls74} and the Classification of Finite Simple Groups, there is a constant $c > 0$ such that $\mu(T) \ge |T|^{c/r}$ for every nonabelian finite simple group of rank $r$.
    If $G \le S_n$ is a primitive permutation group of degree $n$ with no composition factors of rank greater than $r \ge 1$, it follows that
    \[
    |G|^{c/r} \le \mucf(G) < n^5
    \]
    by \Cref{thm:gen-palfy-wolf}.
    Hence $|G| < n^{Cr}$ where $C = 5/c$.
\end{proof}

As an immediate consequence of \Cref{thm:gen-palfy-wolf} and Schreier's lemma (\Cref{lem:schreier}) we can bound the diameter of a primitive permutation group $G \le S_n$ in terms of $\theta_1(G)$.
This is a special case of \Cref{cor:transitive},
and we will use this special case in the next section.
The proof of the general case of \Cref{cor:transitive} is deferred to \Cref{sec:perm-groups}.

\begin{corollary}
    \label[corollary]{thm:primitive}
    If $G \le S_n$ is primitive then
    \(
        \diam(G) < \tfrac14 n^{10 + 5 \theta_1(G)}.
    \)
\end{corollary}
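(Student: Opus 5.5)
We take a composition series $G = G_0 > G_1 > \cdots > G_\ell = 1$ and iterate \Cref{lem:schreier}(2) along it. Each factor $T_i = G_{i-1}/G_i$ has
\[
\diam(T_i) \le \mu(T_i)^{\theta_1(G)}.
\]
For nonabelian $T_i$ this is the definition of $\theta_1(G)$. For abelian $T_i = C_p$ it holds because $\diam(C_p) = \floor{p/2} < p = \mu(C_p)$ and $\theta_1(G) \ge 1$.

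The obvious plan is to apply \Cref{lem:schreier}(2) along the whole series, which gives
\[
\diam(G) \le 4^{\ell-1}\prod_i \diam(T_i) \le 4^{\ell-1} \mucf(G)^{\theta_1(G)}.
\]
By \Cref{thm:gen-palfy-wolf}, $\mucf(G) < n^5$, so the second factor is at most $n^{5\theta_1(G)}$. The remaining task is to control the constant $4^{\ell-1}$, which grows with the composition length $\ell$.

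To absorb $4^{\ell}$ we use $\mu(T_i) \ge 2$ for every factor, so that
\[
4^{\ell} \le \prod_i \mu(T_i)^2 = \mucf(G)^2 < n^{10}.
\]
Since $\theta_1(G) \ge 1$, this accounts for the $n^{10}$ term, giving $\diam(G) \le 4^{-1}\mucf(G)^{2+\theta_1(G)}$.

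If this is still too crude, a cleaner route is to apply Schreier to a chief or normal series rather than a composition series. Alternatively, prove by induction on the series the bound
\[
\diam(G_{i}) \le \tfrac14 \prod_{j>i} \bigl(4\mu(T_j)^{\theta_1(G)}\bigr),
\]
using \Cref{lem:schreier}(2) at each step. Then $4\mu(T_j)^{\theta_1} \le \mu(T_j)^{2+\theta_1}$, so the total is
\[
\tfrac14 \mucf(G)^{2+\theta_1} < \tfrac14 n^{10+5\theta_1}.
\]

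One subtlety is that the series must consist of normal subgroups of the group whose diameter we are bounding at each stage. \Cref{lem:schreier}(2) is applied to $G_{i-1}$ with $G_i \nsgp G_{i-1}$ and $1 < G_i < G_{i-1}$, which is fine for a composition series. The factors with $G_i = 1$ are handled directly, since $\diam(G_{\ell-1}) = \diam(T_\ell)$. The $\diam$ in \Cref{lem:schreier}(2) is the maximum over all generating sets of the subgroup, so the induction on $\diam(G_i)$ as an abstract group is legitimate.

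The main obstacle is the bookkeeping of the factor $4$ per step. It is handled by $4 \le \mu(T)^2$, and this is exactly the reason for the exponent $10 = 5\cdot 2$ in the statement. The strict inequality comes from the strict inequality in \Cref{thm:gen-palfy-wolf}.
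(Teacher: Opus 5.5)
Your proof is correct and is essentially the paper's argument. You iterate \Cref{lem:schreier}(2) along a composition series to get $\diam(G) \le 4^{\ell-1}\mucf(G)^{\theta_1(G)}$, then use $\mucf(G) < n^5$ from \Cref{thm:gen-palfy-wolf}; the only cosmetic difference is that the paper controls $4^{\ell-1}$ via $\ell < 5\log_2 n$ (that is, $2^\ell \le \mucf(G) < n^5$), whereas you write $4^\ell \le \mucf(G)^2$ directly, which is the same estimate.
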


\begin{proof}
    Let $G \le S_n$ be a primitive group and let
    \(
    G = G_0 > \cdots > G_\ell = 1
    \)
    be a composition series for $G$.
    Let $T_i = G_{i-1} / G_i$ for $1 \le i \le \ell$.
    Let $\theta_1 = \theta_1(G)$.
    Then
    \[
    \diam(T_i) \le \mu(T_i)^{\theta_1} \qquad (1 \le i \le l).
    \]
    Indeed, this is immediate from the definition if $T_i$ is nonabelian, and if $T_i$ is abelian then it holds because $\mu(T_i) = |T_i|$ and $\theta_1 \ge 1$.
    By \Cref{lem:schreier},
    \begin{align*}
        \diam(G)
        \le 4^{\ell-1} \prod_{i=1}^{\ell} \diam(T_i)
        \le 4^{\ell-1} \prod_{i=1}^\ell \mu(T_i)^{\theta_1}
        = 4^{\ell-1} \mucf(G)^{\theta_1}.
    \end{align*}
    By \Cref{thm:gen-palfy-wolf}, $\mucf(G) < n^5$
    and $\ell < 5 \log_2(n)$.
    It follows that
    \[
    \diam(G) < 4^{5 \log_2(n) - 1} n^{5\theta_1} \le \tfrac14 n^{10 + 5\theta_1},
    \]
    as claimed.
\end{proof}

\section{Groups with trivial soluble radical}
\label[section]{sec:anabelian-socle}

If $G$ is a finite group we denote by $\sol(G)$ the soluble radical of $G$, i.e., the largest soluble normal subgroup of $G$.
In this section we consider finite groups $G$ with $\sol(G)$ trivial.
The \emph{socle} of $G$ is the subgroup $\soc(G)$ generated by the minimal normal subgroups.
If $\sol(G) = 1$ then $\soc(G)$ is a direct product of nonabelian simple groups.

A critical subcase is the case in which $G$ is monolithic with nonabelian socle.
A finite group $G$ is said to be \emph{monolithic} if it has a unique minimal normal subgroup.
A key motivating example is the iterated wreath product
\[
    G_h = A_5 \wr \cdots \wr A_5 \le \Sym(5^h).
\]

The following lemma is basic and well-known.

\begin{lemma}
    \label[lemma]{lem:min-normal-nonabelian}
    Let $N_1, \dots, N_k$ be distinct nonabelian minimal normal subgroups of a group $G$.
    Then
    \begin{enumerate}
        \item $\gen{N_1, \dots, N_k} = N_1 \times \cdots \times N_k$,
        \item every $G$-normal subgroup of $N_1 \times \cdots \times N_k$ has the form $\prod_{i \in I} N_i$.
    \end{enumerate}
    In particular this holds if $N_1, \dots, N_k$ are nonabelian simple groups and $G = \gen{N_1, \dots, N_k}$.
\end{lemma}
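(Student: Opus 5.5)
The plan is induction on $k$ for (1), then a projection argument for (2), using throughout the basic fact that distinct minimal normal subgroups intersect trivially and therefore commute. Indeed, if $N_i \ne N_j$ then $N_i \cap N_j$ is a normal subgroup of $G$ properly contained in the minimal normal subgroup $N_i$, so $N_i \cap N_j = 1$. Hence $[N_i, N_j] \le N_i \cap N_j = 1$.

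For (1), suppose inductively that $M = \gen{N_1,\dots,N_{k-1}} = N_1 \times \cdots \times N_{k-1}$. This is normal in $G$, and it suffices to show $N_k \cap M = 1$. The intersection $N_k \cap M$ is normal in $G$ and contained in $N_k$, so by minimality it is $1$ or $N_k$. Suppose $N_k \le M$. Since $N_k$ commutes elementwise with each $N_i$ for $i<k$, it centralizes $M$, so $N_k \le Z(M)$. In particular $N_k$ is abelian, a contradiction. Thus $\gen{N_1,\dots,N_k} = M \times N_k$.

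For (2), let $K \nsgp G$ with $K \le N_1 \times \cdots \times N_k$, and put $I = \{i : N_i \le K\}$. Clearly $\prod_{i\in I} N_i \le K$. For the reverse inclusion, fix $j \notin I$. Then $K \cap N_j$ is $G$-normal and properly contained in $N_j$, so it is trivial, and hence $[K, N_j] \le K \cap N_j = 1$. Now let $\pi_j$ be the projection onto the $j$th factor. Since $N_j$ commutes with every other factor, $\pi_j(K)$ centralizes $N_j$: for $x \in K$ and $n\in N_j$ we have $1=[x,n]=[\pi_j(x),n]$. But $\pi_j(K)$ is contained in $N_j$, so $\pi_j(K) \le Z(N_j)$. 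The centre $Z(N_j)$ is characteristic in $N_j$, hence normal in $G$, and it is proper in $N_j$ because $N_j$ is nonabelian, so minimality gives $Z(N_j)=1$. Therefore $\pi_j(K)=1$ for every $j\notin I$, so $K \le \prod_{i\in I} N_i$.

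For the final sentence, suppose each $N_i$ is nonabelian simple and $G = \gen{N_1,\dots,N_k}$, with the $N_i$ normal in $G$ as the first part requires. Then each $N_i$ is a minimal normal subgroup of $G$, because any $G$-normal subgroup of $N_i$ is normal in the simple group $N_i$. So (1) and (2) apply.

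No step is a real obstacle. The only point needing care is the use of nonabelianness to rule out central pieces, which happens twice: once to show $N_k \not\le M$ in (1), and once to show $Z(N_j)=1$ in (2).
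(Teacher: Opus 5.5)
Your proof is correct and follows essentially the same route as the paper. Part (1) is the same induction on $k$, using $[N_i,N_j]\le N_i\cap N_j=1$ and nonabelianness; for part (2) you argue coordinatewise via $[K,N_j]\le K\cap N_j=1$, $\pi_j(K)\le Z(N_j)=1$, whereas the paper quotients out $\prod_{i\in I}N_i$ to reduce to $I=\emptyset$ and then uses $Z(\prod_i N_i)=1$ — a cosmetic difference that spares you checking that the quotient inherits the hypotheses and makes explicit why the centres are trivial.
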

\begin{proof}
    Part \emph{(1)} is proved by induction on $k$, using the fact that $[N_i, N_j] \le N_i \cap N_j = 1$ for $i \ne j$. This implies that $N_k$ centralizes $\gen{N_i : 1 \le i < k}$, so $N_k \cap \gen{N_i : 1 \le i < k} = 1$. This implies that
    \[
        \gen{N_1, \dots, N_k} = \gen{N_1, \dots, N_{k-1}} \times N_k = N_1 \times \cdots \times N_k
    \]
    by induction.

    For part \emph{(2)}, suppose $N \le N_1 \times \cdots \times N_k$ and $N \nsgp G$.
    Let $I = \{i : N_i \le N\}$. By quotienting out $\prod_{i\in I} N_i$ we may assume $I = \emptyset$.
    Then $[N, N_i] \le N \cap N_i = 1$ for each $i$, so $N \le Z(\prod N_i) = 1$.
\end{proof}

We now give a series of results that, given $G = \gen X$ and a normal subgroup $N$ of $G$, try to locate a short nontrivial element in $N$.
The subgroup length $\lambda(N)$ was defined in \Cref{sec:soluble},
and recall that $B_X(r) = (X \cup X^{-1} \cup \{1\})^r$ denotes the ball of radius $r$.
A subgroup $N \le M$ is called \emph{self-centralizing} if $C_M(N) \le N$.
The following lemma generalizes \cite{wilson2}*{Lemma~3.4}.

\begin{lemma}
    \label[lemma]{lem:wilson-commutator-argument}
    Let $k, m \ge 1$ and let $M_1, \dots, M_k$ be groups such that, for each $i$,
    $M_i$ has a self-centralizing minimal normal subgroup $N_i$ and $\sublen(N_i) \le m$.
    Let $M = \gen X \le M_1 \times \cdots \times M_k$ be a subdirect subgroup, and let $\pi_i : M \to M_i$ be the natural epimorphism.
    Let $r \ge 1$ and assume that
    \[
        \pi_i(B_X(r)) \cap N_i \ne \{1\} \qquad (1 \le i \le k).
    \]
    Then
    \[
        B_X(4 k^2 (r + 2m)) \cap (N_1 \times \cdots \times N_k) \ne \{1\}.
    \]
\end{lemma}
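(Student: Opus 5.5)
The plan is to build the required element by iterated commutators, using throughout the following fact. If $g \in M$ satisfies $\pi_i(g) \in N_i$, then for any $h \in M$ we have $\pi_i([g,h]) \in N_i$, because $N_i \nsgp M_i$. So membership in $N_i$ is inherited by commutators in the corresponding coordinate, and commutators let us merge information from different coordinates. The whole difficulty is to keep the element nontrivial while keeping its length under control.

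\emph{Step 1 (short conjugates).} For each $i$, fix $x_i \in B_X(r)$ with $1 \ne n_i := \pi_i(x_i) \in N_i$. Since $N_i$ is minimal normal in $M_i$ and $\pi_i$ is onto, the normal closure of $n_i$ in $M_i$ is $N_i$. I would run the chain argument from the proof of \Cref{lem:generalized-Milnor}, applied inside $N_i$ to the subgroups $\gen{n_i^{w} : w \in \pi_i(B_X(j))}$. This chain must stabilise within $\sublen(N_i) \le m$ steps, and at that point the subgroup is normal, hence equal to $N_i$. Consequently the conjugates $x_i^w$ with $w \in B_X(m)$, each of length at most $r + 2m$, already generate $N_i$ modulo $\ker \pi_i$. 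The key use is this: if $a \in M_i$ commutes with $\pi_i(x_i^w)$ for all $w \in B_X(m)$, then $a \in C_{M_i}(N_i) \le N_i$, by self-centralisation.

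\emph{Step 2 (one merge).} Suppose $g \in B_X(R)$ is nontrivial and $\pi_i(g) \in N_i$ for all $i$ in a set $S$. If also $\pi_i(g) \in N_i$ for every $i$, we are done. Otherwise pick $i \notin S$ with $\pi_i(g) \notin N_i$. By Step~1, some $w \in B_X(m)$ gives $\pi_i([x_i^w, g]) \ne 1$. The element $[x_i^w, g]$ has length at most $2R + 2(r+2m)$. For every $j \in S \cup \{i\}$ its $j$-th coordinate lies in $N_j$: for $j \in S$ this comes from $g$, and for $j = i$ it comes from $x_i^w$. It is also nontrivial. If the invariant were enlarged one index at a time like this, we would get $R_{j+1} \le 2R_j + 2(r+2m)$, which is exponential in $k$. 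That is too weak, so the merging must be organised differently.

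\emph{Step 3 (getting $4k^2$) — the main obstacle.} To reach a polynomial bound I would avoid letting the length double $k$ times. The first option is to make every merge the commutator of a freshly chosen short conjugate $x_i^w$, of length $r+2m$, with the \emph{current} element $g$, but arranged so that the cost is additive. Writing $[x_i^w, g] = (x_i^{-1})^w \cdot (x_i)^{wg}$ reduces this to bounding conjugation by $g$. So I would instead keep, at each stage, an element expressed as a product of at most $O(j)$ conjugates of the $x$'s by words of length $O(j(r+2m))$; this gives length $O(j^2(r+2m))$ at stage $j$, which is where the $4k^2$ should come from. The alternative is a balanced binary-tree merge of depth $\log_2 k$. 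There the nontriviality at a merge of an $S$-element with a $T$-element is the delicate point, and I would handle it with the same Step~1 dichotomy: either the $S$-element already lies in $N_i$ for all $i \in T$ and we keep it, or some commutator with a short conjugate is nontrivial. Verifying that nontriviality survives with additive rather than multiplicative length growth is the step I expect to need the most care.
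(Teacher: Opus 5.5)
Your Steps~1 and~2 are sound, and you correctly see that adding one coordinate at a time forces exponential length. However, Step~3 is the entire content of the lemma, you leave it open, and neither route works as described.

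\emph{First option.} The invariant you propose is not preserved. $[x_i^w,g]=(x_i^{-1})^w\,x_i^{wg}$ has conjugator $wg$ of length $\ell(w)+\ell(g)$, so the length still roughly doubles. If you instead expand $g$ as a product of $s$ conjugates, then $[x_i^w,g]=(g^{-1})^{x_i^w}g$ is a product of $2s$ conjugates, so the number of factors doubles. Either way you are back to exponential growth.

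\emph{Second option.} With a fixed split $S\sqcup T$, the self-centralizing trick at a coordinate $i$ needs simultaneously $\pi_i(g_S)\notin N_i$ and $1\ne\pi_i(g_T)\in N_i$. Nothing in your argument guarantees such an $i$. For instance, $g_T$ may be supported only on coordinates where $\pi_i(g_S)=1$, and then $[g_S,g_T^h]=1$ for every $h$. Also, ``the same Step~1 dichotomy'' uses short conjugates of the base elements $x_i$. Commuting with some $x_i^w$ only ever secures one new coordinate, which is exactly Step~2.

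The missing idea, which is how the paper argues, is to choose the second half \emph{adaptively}.
\begin{itemize}
\item By induction on a set $I$ of size $k_1=\lceil k/2\rceil$, get $x\in B_X(a_{k_1})$ with $1\ne\pi_I(x)\in\prod_{i\in I}N_i$.
\item Let $J=\{j:\pi_j(x)\notin N_j\}$. Then $J\cap I=\emptyset$, so $|J|\le\lfloor k/2\rfloor$. If $J=\emptyset$, take $x$.
\item Otherwise apply induction (and monotonicity of $a_k$) to $\pi_J(M)=\langle\pi_J(X)\rangle$ to get $y\in B_X(a_{k_1})$ with $1\ne\pi_J(y)\in\prod_{j\in J}N_j$. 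Pick $j\in J$ with $\pi_j(y)\ne 1$.
\item The generalized Milnor lemma, applied to $\pi_j(y)$ rather than to a base element, shows that $N_j$ is generated by the $\pi_j(y^h)$ with $h\in B_X(m)$. Since $\pi_j(x)\notin N_j\supseteq C_{M_j}(N_j)$, some such $h$ gives $\pi_j([x,y^h])\ne 1$.
\item Every coordinate is good for $x$ (outside $J$) or for $y$ (inside $J$), so $[x,y^h]\in N_1\times\cdots\times N_k$.
\end{itemize}
This gives the recurrence $a_k=4a_{\lceil k/2\rceil}+4m$. Solving it, $a_k=4^i(r+\tfrac43 m)-\tfrac43 m<4k^2(r+2m)$, where $2^{i-1}<k\le 2^i$.

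Note that the length growth here is multiplicative (a factor $4$ per level), not additive as you hoped. The $k^2$ comes from the depth $\lceil\log_2 k\rceil$ of the recursion.
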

\begin{proof}
    We will show by induction on $k$ that
    \[
            B_X(a_k) \cap (N_1 \times \cdots \times N_k) \ne \{1\}.
    \]
    where the integer sequence $(a_k)_{k \ge 1}$ is defined by the recurrence equation
    \[
        a_1 = r, \qquad a_k = 4 a_{\ceil{k/2}} + 4m \qquad (k \ge 2).
    \]
    It is straightforward to check by induction that
    \[
        a_k = 4^{i} (r + \tfrac43 m) - \tfrac 43m
    \]
    where $i = \ceil{\log_2 k}$ is the unique integer such that $2^{i-1} < k \le 2^i$; indeed, it suffices to note that $2^{i-2} < \ceil{k/2} \le 2^{i-1}$.
    This will prove the lemma since
    \[a_k < 4 k^2 (r + 2m).\]
    Note also that $a_k$ is monotonic in $k$: this will be used in the proof.

    The case $k = 1$ is trivial, so assume $k \ge 2$.
    For any subset $I \subseteq \{1, \dots, k\}$, let $\pi_I$ denote the projection map
    \[\pi_I: M \to M_I = \prod_{i\in I} M_i.\]
    Let $I \subset \{1, \dots, k\}$ be a subset of cardinality $k_1 = \ceil{k/2} < k$, and consider $\pi_I(M) = \gen{\pi_I(X)} \le M_I$.
    By induction, there is an element $x \in B_X(a_{k_1})$ such that
    \[
        1 \ne \pi_I(x) \in \prod_{i \in I} N_i.
    \]
    Let
    \(
        J = \{j \in \{1, \dots, k\} : \pi_j(x) \notin N_j\}.
    \)
    Then $I \cap J = \emptyset$, so $|J| \le \floor{k/2} \le k_1$.
    If $J = \emptyset$ then we may take $g = x$.
    Otherwise, by induction there is $y \in B_X(a_{k_1})$ such that
    \[
        1 \ne \pi_J(y) \in \prod_{j \in J} N_j.
    \]
    Let $j \in J$ be an index such that $\pi_j(y) \ne 1$.
    Then
    \[
        \pi_j(x) \in M_j \setminus N_j , \qquad \pi_j(y) \in N_j \setminus \{1\}.
    \]
    Since $N_j$ is minimal normal in $M_j$ and $\pi_j(y) \ne 1$,
    $N_j$ is the normal closure of $\pi_j(y)$, so \Cref{lem:generalized-Milnor} gives
    \[
        N_j = \gen{\pi_j(y^h) : h \in B_X(m)}.
    \]
    Since $C_{M_j}(N_j) \le N_j$ and $\pi_j(x) \notin N_j$, there is some $h \in B_X(m)$ such that
    \[
        \pi_j([x, y^h]) = [\pi_j(x), \pi_j(y^h)] \ne 1.
    \]
    Let $g = [x, y^h]$. Then $g$ is a nontrivial element of $N_1 \times \cdots \times N_k$ such that
    \[
        \len_X(g)
        \le 2 \len_X(x) + 2 \len_X(y) + 4 m
        \le 4 a_{k_1} + 4m = a_k.
    \]
    This finishes the proof.
\end{proof}

\begin{proposition}
    \label[proposition]{prop:monolithic-nonabelian-socle}
    Let $G = \gen X$ be a finite monolithic group with nonabelian socle $N \cong T^n$, where $T$ is a nonabelian simple group and $n\ge 1$.
    Then
    \[
        B_X(2n^{16 + 5 \theta_1(G)} \log_2 |T|) \cap N \ne \{1\}.
    \]
\end{proposition}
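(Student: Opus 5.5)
We argue by induction on $n$, the number of simple factors of the socle, following the wreath-product prototype.

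\emph{Setup.} Write $N = T_1 \times \cdots \times T_n$. Since $N$ is the unique minimal normal subgroup, $G$ permutes the $T_i$ transitively. Since $N$ is nonabelian and monolithic, $C_G(N) = 1$, so $G$ embeds in $\Aut(T)\wr S_n$. Let $K$ be the kernel of the action of $G$ on $\{T_1,\dots,T_n\}$, and let $\bar G = G/K \le S_n$, a transitive group.

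\emph{Reduction to a primitive quotient.} Choose a maximal block system $\B$ for $\bar G$ with $d$ blocks, so that the induced group $P \le S_d$ is primitive. Let $L \nsgp G$ be the kernel of the action on $\B$. Then $G/L \cong P$, and \Cref{thm:primitive} gives
\[
    \diam(P) < \tfrac14 d^{10+5\theta_1(G)},
\]
since the composition factors of $P$ are among those of $G$. By \Cref{lem:schreier}(1), $L = \gen Y$ with $\len_X(Y) \le 2\diam(P)+1 \le d^{10+5\theta_1}$.

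\emph{Applying induction inside a block.} For a block $B$ of size $n/d$, let $N_B = \prod_{i\in B} T_i$, and let $M_B = L/C_L(N_B)$. We want each $M_B$ to have the block product as a self-centralizing minimal normal subgroup, so the plan is to pass to the quotient where $L$ acts on $N_B$. The image of $L$ permutes $B$ transitively, since $G_B$ does and the stabilizer acts on $B$ as a transitive group. However, this image need not be monolithic, and $L$ may not act transitively on the factors in $B$. So instead we apply induction to $G_B/C_{G_B}(N_B)$, where $G_B$ is the setwise stabilizer of the block. This group is monolithic with socle $T^{n/d}$, and it is generated by elements of length $\le 2d+1$ via Schreier (index $d$, Schreier graph diameter $\le d$). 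Induction gives a nontrivial element of $N_B$ modulo $C(N_B)$ with length $\le (2d+1)\cdot 2(n/d)^{16+5\theta_1}\log_2|T|$.

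\emph{Combining blocks.} We then apply \Cref{lem:wilson-commutator-argument} with $k = d$, $M_i$ the groups induced by the subdirect image of $G_B$ or $L$, and $m \le \log_2|T^{n/d}| = (n/d)\log_2|T|$. We must check the required hypotheses. The minimal normal subgroup $N_B$ of the induced group is self-centralizing, since $G_B/C(N_B)$ is monolithic with nonabelian socle, and it is minimal normal under the transitive action. To have a common generating set we use $L$ rather than the $G_B$. Conjugating the short element found for one block by a transversal of length $\le \diam(P)$ gives short nontrivial projections for every block in $L$'s generating set, though this conjugation gives elements of $G$, not of $L$. This is the main obstacle: arranging a single subdirect subgroup generated by short words in which every coordinate has a short nontrivial socle element. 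I would first try to take $M = L$, handling the non-simplicity of $N_B$ in $L/C_L(N_B)$ by using minimal normal subgroups of $L$ inside $N_B$, which are products over $L$-orbits on $B$.

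\emph{Exponent bookkeeping.} The lemma then yields a length of order
\[
    4d^2\bigl(d^{10+5\theta_1}\cdot(n/d)^{16+5\theta_1}\log_2|T| + 2n\log_2|T|\bigr).
\]
The exponent $16$ absorbs the $d^{2}$ factor and the Schreier factor when $d \ge 2$, giving the bound $2n^{16+5\theta_1}\log_2|T|$. The base case $n = 1$ follows from $G \le \Aut(T)$ and \Cref{lem:generalized-Milnor}.
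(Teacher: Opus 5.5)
\textbf{Verdict: genuine gap.} Your architecture matches the paper's, and your $L$ (the kernel of the action on a maximal block system) is essentially the paper's $M$. But the combining step, which is the heart of the proof, is never carried out, as you acknowledge yourself.

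\textbf{Why your two routes fail.} The $G_B$ route cannot work. \Cref{lem:wilson-commutator-argument} needs one group, with one short generating set, that is subdirect in the product of the coordinate groups, and the separate block stabilizers $G_B$ do not supply this. Using $L$ with coordinates indexed by blocks also fails. Since $L$ need not be transitive on $B$, $N_B$ need not be minimal normal in $L/C_L(N_B)$. Picking one $L$-minimal normal subgroup inside each $N_B$ does not give a faithful embedding either.

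\textbf{The missing step.} Index the coordinates not by blocks but by \emph{all} the minimal normal subgroups $N_1,\dots,N_k$ of $L$.
\begin{enumerate}
\item Each $N_i$ lies in $N$, because $N_i\cap N=1$ would force $N_i\le C_G(N)=1$. Hence $N_i$ is the product of the $T_j$ over one $L$-orbit.
\item Since $L\trianglelefteq G$ and $G$ is transitive on the factors, $G$ permutes the $N_i$ transitively. So $N=N_1\times\cdots\times N_k$ with $N_i\cong T^{n/k}$.
\item Each $L/C_L(N_i)$ is monolithic with socle $N_i$, as in \Cref{lem:isolating-nonab-chief-factor}, so that socle is self-centralizing.
\item $L$ is subdirect in $\prod_i L/C_L(N_i)$, since $\bigcap_i C_L(N_i)=C_L(N)=1$.
\end{enumerate}
Induction now applies to $L/C_L(N_i)=\gen{\pi_i(Y)}$, with $n/k<n$. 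Then \Cref{lem:wilson-commutator-argument} with $k$ coordinates and $m\le (n/k)\log_2|T|$ gives $1\ne g\in N$ with $\ell_Y(g)\le 4k^2(r+2m)\le 16k^2(n/k)^{16+5\theta_1}\log_2|T|$. Your closing suggestion points this way, but you never commit to it.

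\textbf{Bookkeeping.} It must be done with $k$, not $d$. Your displayed bound assumes $d$ coordinates of type $T^{n/d}$, which is valid only when $L$ is transitive on every block. It also multiplies only $r$ by $\ell_X(Y)$, rather than all of $4k^2(r+2m)$. What makes the estimate close is that $L$ fixes every block, so the $L$-orbits refine $\mathcal B$ and $k\ge d\ge 2$. Hence
$\ell_X(g)\le d^{10+5\theta_1}\cdot 16k^2(n/k)^{16+5\theta_1}\log_2|T|\le 16k^{-4}n^{16+5\theta_1}\log_2|T|\le n^{16+5\theta_1}\log_2|T|$.
This is exactly the paper's argument. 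There $M/K$ is chosen maximal normal intransitive, and one shows that $G/M$ acts primitively on maximal blocks for the $M$-orbits, which amounts to the same thing.

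\textbf{Base case.} It is mis-cited. \Cref{lem:generalized-Milnor} only spreads a short element of $N$ that you already have; it cannot produce the first one. Use \Cref{lem:schreier}(1) instead. Since $G/N\hookrightarrow \Out(T)$ and $|\Out(T)|\le\log_2|T|-1$, you get $N=\gen Z$ with $\ell_X(Z)\le 2\diam(G/N)+1\le 2|\Out(T)|+1\le 2\log_2|T|$.
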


\begin{proof}
    We use induction on $n$.
    If $n = 1$ then $T \le G \le \Aut(T)$.
    By \Cref{lem:schreier}, $N = \gen Z$ where $\len_X(Z) \le 2 |\Out(T)| + 1$.
    We have $|\Out(T)| \le \log_2 |T| - 1$ (see \cite{Kohl} or \cite{Menezes}*{Lemma 2.3.2}),
    so indeed we have $\len_X(Z)\le 2\log_2 |T|$, as required.

    Hence assume $n \ge 2$.
    Then $G \le \Aut(N) \cong \Aut(T) \wr S_n$.
    Let $\phi : G \to S_n$ be the induced homomorphism.
    Since $N$ is minimal normal, $\phi(G) \le S_n$ is transitive.
    Let $K = \ker \phi = G \cap \Aut(T)^n$, so $G / K \cong \phi(G)$.
    Let $M / K$ be a maximal normal intransitive subgroup of $G / K$.

    Let $N_1$ be a minimal normal subgroup of $M$.
    Let the $G$-conjugates of $N_1$ be denoted $N_1, \dots, N_k$.
    Then, by \Cref{lem:min-normal-nonabelian},
    \[
        N = \gen{N_1^G} = N_1 \times \cdots \times N_k = T_1 \times \cdots \times T_n,
    \]
    and each $N_j$ is of the form $\prod_{i \in I} T_i$ where $I \subset \{1, \dots, n\}$ is an orbit of $\phi(M)$.
    Also, $k > 1$ since $\phi(M)$ is intransitive.

    Now $G / M$ permutes $N_1, \dots, N_k$ transitively.
    Let $\Delta = \{B_1, \dots, B_s\}$ be a system of maximal blocks for this action, where $1 < s \mid k$.
    If $M_1$ is the kernel of the action $G \to \Sym(\Delta)$ then $M \le M_1 \nsgp G$ and $\phi(M_1)$ is intransitive, so $M = M_1$ by maximality of $M$.
    Thus $G / M$ has a faithful primitive action of degree $s \le k$.
    Applying \Cref{thm:primitive},
    \[
        \diam(G / M) < \tfrac14 k^{10 + 5 \theta_1},
    \]
    where $\theta_1 = \theta_1(G)$.
    By \Cref{lem:schreier} it follows that $M = \gen Y$ where
    \[
        \len_X(Y) \le k^{10 + 5 \theta_1}.
    \]

    Let $M_i = M / C_M(N_i)$ for $i = 1, \dots, k$.
    Then $M_i$ is monolithic with socle $N_iC_M(N_i)/C_M(N_i) \cong N_i \cong T^{n/k}$,
    and $M = \gen Y$ is subdirect in $M_1 \times \cdots \times M_k$ since
    \(
        \bigcap_{i=1}^m C_M(N_i) = C_M(N) = 1.
    \)
    Write $\pi_i:M\rightarrow M_i$ for the natural projection.
    By induction,
    \[
        \pi_i(B_Y(r)) \cap N_i \ne \{1\},
    \]
    where $r = 2 (n/k)^{16 + 5 \theta_1} \log_2 |T|$.
    Now by \Cref{lem:wilson-commutator-argument}
    and the bound
    \[
        \sublen(N_i) \le \log_2 |N_i| \le (n/k) \log_2 |T| \le \tfrac12 r,
    \]
    there is a nontrivial element $g \in N = N_1 \times \cdots \times N_k$ such that
    \begin{align*}
        \len_Y(g)
        \le 4 k^2 (r + 2m)
        \le 2^4 k^2 (n / k)^{16 + 5 \theta_1} \log_2 |T|.
    \end{align*}
    This implies that
    \begin{align*}
        \len_X(g) \le \len_X(Y) \len_Y(g)
        &\le 2^4 k^{12 + 5 \theta_1} (n/k)^{16 + 5 \theta_1} \log_2 |T| \\
        &= 2^4 k^{-4} n^{16 + 5 \theta_1} \log_2 |T| \\
        &\le n^{16 + 5 \theta_1} \log_2 |T|
    \end{align*}
    since $k \ge 2$.
    This concludes the proof.
\end{proof}

The following proposition is the main result of this section.

\begin{proposition}
    \label[proposition]{thm:anabelian-socle}
    Let $G = \gen X$ be a nontrivial finite group with trivial soluble radical.
    Let $S = \soc(G)$.
    Then
    \[
        B_X((\log |S|)^{18 + 5\theta_1(G)}) \cap S \ne \{1\}.
    \]
\end{proposition}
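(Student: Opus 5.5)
Since $\sol(G) = 1$, the socle decomposes as $S = S_1 \times \cdots \times S_k$, where the $S_i$ are the minimal normal subgroups of $G$. Each $S_i \cong T_i^{n_i}$ is nonabelian. For each $i$ let $C_i = C_G(S_i)$ and $M_i = G/C_i$. I will check the following about $M_i$:
\begin{itemize}
  \item $M_i$ is monolithic with socle $S_i C_i/C_i \cong S_i$. A minimal normal subgroup of $G/C_i$ intersects $S_iC_i/C_i$ nontrivially, because otherwise it centralizes $S_i$.
  \item This socle is self-centralizing in $M_i$.
  \item $\bigcap_i C_i = C_G(S) = 1$, since a nontrivial $C_G(S)$ would contain a minimal normal subgroup of $G$, which lies in $S$, forcing $Z(S) \neq 1$.
\end{itemize}
Hence $G = \gen X$ is subdirect in $M_1 \times \cdots \times M_k$, and the setup of \Cref{lem:wilson-commutator-argument} applies with $N_i$ the socle of $M_i$. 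The final output is a nontrivial element of $S_1 \times \cdots \times S_k$. This is legitimate because $\prod_i S_i C_i/C_i$ pulls back to $S$ inside the subdirect product: the preimage of $\prod_i N_i$ is $\{g : [g,S_i] \subseteq S_i \cdots\}$, and more concretely an element $g \in G$ with $\pi_i(g) \in N_i$ for every $i$ agrees with some $s \in S$ modulo each $C_i$. I need to double check the latter. An easier route is to note that the lemma's proof produces commutators of elements lying in $S$-cosets, but I would instead argue directly: if $\pi_i(g) \in N_i$ for all $i$, write $\pi_i(g) = \pi_i(s_i)$ with $s_i \in S_i$ and set $s = \prod s_i$. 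Since $S_j \le C_i$ for $j \ne i$, we get $\pi_i(g s^{-1}) = 1$ for every $i$, so $g = s \in S$.

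For each $i$, \Cref{prop:monolithic-nonabelian-socle} applied to $M_i$, with the generators $\pi_i(X)$, gives
\[
\pi_i\bigl(B_X(r_i)\bigr) \cap N_i \ne \{1\}, \qquad r_i = 2 n_i^{16+5\theta_1(G)} \log_2 |T_i|.
\]
Here I use $\theta_1(M_i) \le \theta_1(G)$, which holds because composition factors of quotients are composition factors of $G$. Each $r_i$ is bounded by $r := 2(\log_2|S|)^{17+5\theta_1}$, since $n_i \le \log_2|S_i|$ and $\log_2|T_i| \le \log_2|S|$. Also $\sublen(N_i) \le \log_2|S|$, and $k \le \log_2|S|/\log_2 60$. \Cref{lem:wilson-commutator-argument} then yields a nontrivial element of $S$ of length at most
\[
4k^2\bigl(r + 2\log_2|S|\bigr) \ll (\log|S|)^{2}(\log|S|)^{17+5\theta_1} = (\log|S|)^{19+5\theta_1}.
\]

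This last estimate is the main obstacle: the exponent comes out one too large compared with the claimed $18+5\theta_1$. To save a factor I would be more careful with the exponents. Since $\sum_i n_i \log_2|T_i| = \log_2|S|$, the product $n_i^{16+5\theta_1}\log_2|T_i|$ is at most $n_i^{15+5\theta_1}\log_2|S| \le (\log_2|S|/\log_2 60)^{15+5\theta_1}\log_2|S|$, because $n_i \le \log_2 |S|/\log_2 60$. Combined with $k^2 \le (\log_2|S|/\log_2 60)^2$, this gives a total of order $(\log_2|S|)^{18+5\theta_1}$ times $(\log_2 60)^{-17-5\theta_1}$. That factor absorbs the constants $8$ (from $4\cdot 2$) and $4$, and also the change from $\log_2$ to $\log$, whose base-change factor is $(\log 2)^{-(18+5\theta_1)}$ against $(\log_2 60)^{-(17+5\theta_1)}$. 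I would finish by verifying this numerically, treating the case $k = 1$ separately: there \Cref{prop:monolithic-nonabelian-socle} applies directly, and the argument above already covers it.
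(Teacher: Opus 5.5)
Your proposal is correct and is essentially the paper's proof: pass to the monolithic quotients $G/C_G(S_i)$, apply \Cref{prop:monolithic-nonabelian-socle} to each, and combine them with \Cref{lem:wilson-commutator-argument}. The lost logarithm is recovered via $n_i^{16+5\theta_1}\log_2|T_i| \le n_i^{15+5\theta_1}\log_2|S|$ together with $n_i, k \le \log_{60}|S|$, which is exactly the paper's bookkeeping with $a = \log 60$.

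Your explicit checks fill in steps the paper leaves implicit: that $C_G(S)=1$, that each socle is self-centralizing, and that an element lying in every $N_i$ already lies in $S$. The numerics you deferred do close. The leftover factor is about $(\log 2)^{-1}(\log 60)^{-(17+5\theta_1)}$, and the $2m$ term is only $O((\log|S|)^3)$ with $\log|S| \ge \log 60$.
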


\begin{proof}
    Let $N_1, \dots, N_k$ be the minimal normal subgroups of $G$.
    By \Cref{lem:min-normal-nonabelian}, $S = N_1 \times \cdots \times N_k$.
    Let $G_i = G / C_G(N_i)$ for $1 \le i \le k$.
    Then each $G_i$ is monolithic with socle $N_i \cong T_i^{n_i}$ say and $G$ is a subdirect in $G_1 \times \cdots \times G_k$.
    Let $\pi_i$ be the projection $G \to G_i$.
    Then $G_i = \gen{\pi_i(X)}$.
    By \Cref{prop:monolithic-nonabelian-socle}, we have
    \[
        \pi_i(B_X(r)) \cap N_i \ne \{1\} \qquad (1 \le i \le k),
    \]
    where
    \[
        r = \max_{1 \le i \le k} 2 n_i^{16 + 5 \theta_1} \log_2|T_i|, \qquad \theta_1 = \theta_1(G).
    \]
    Let $m = \max_{1 \le i \le k} \log_2|N_i|$.
    Then applying \Cref{lem:wilson-commutator-argument} gives
    \[
        B_X(4 k^2 (r + 2m)) \cap (N_1 \times \cdots \times N_k) \ne \{1\}.
    \]
    Bounding crudely,
    \begin{align*}
        k &\le \log_{60} |S| = a^{-1} \log |S|, \qquad\text{where}~a = \log (60) \approx 4.1, \\
        r &\le 2 (\log_{60} |S|)^{15 + 5 \theta_1} \log_2 |S| \le 4 a^{-20} (\log |S|)^{16 + 5\theta_1}, \\
        m &\le \log_2 |S| \le 2a^{-20} (\log |S|)^{16 + 5\theta_1},
    \end{align*}
    so
    \[
        4 k^2 (r + 2m) < (\log |S|)^{18 + 5 \theta_1}.\qedhere
    \]
\end{proof}

To apply \Cref{thm:anabelian-socle} effectively,
we need to bound the diameter of a direct product of simple groups.
The following result is taken from Dona~\cite{dona-direct-products}.

\begin{theorem}
    \label[theorem]{thm:dona}
    Let $G = T_1 \times \cdots \times T_n$ where $T_1, \dots, T_n$ are nonabelian finite simple groups of rank at most $r$.
    Then
    \[
        \diam(G) \ll n^3 r \max_{1 \le i \le n} \diam(T_i).
    \]
\end{theorem}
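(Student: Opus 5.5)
This is a result quoted from Dona, so the plan reconstructs its proof. The idea is to first find, for each factor $T_i$, a short element of $G$ whose support is exactly $\{i\}$. Then $T_i$ is assembled from conjugates of that element, and the factors are combined additively via the extension rule.

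Let $G=\gen X$ and let $\pi_i : G\to T_i$ be the projections. Since $G$ is a direct product of nonabelian simple groups, every normal subgroup is a subproduct (\Cref{lem:min-normal-nonabelian}). The key step is to show that for each $i$ there is $g\in B_X(R)$ with $1\ne \pi_i(g)$ and $\pi_j(g)=1$ for $j\ne i$, where $R\ll n^2 r\max_i\diam(T_i)$. Once we have this, $T_i$ is the normal closure of $g$. Conjugating by elements of $B_X(d_i)$, with $d_i=\diam(T_i)$, and using that $\pi_i(X)$ generates $T_i$, gives a set $Z_i$ of conjugates of $g$ with $\len_X(Z_i)\le R+2d_i$. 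We would like $\gen{Z_i}=T_i$. Using a bounded number $c(r)$ of conjugates that generate $T_i$ (for example, products of two conjugates generating, via results of Guralnick--Kantor type), together with $\diam(T_i)$ again, we get $\len_X(T_i)\ll (R+d_i)\,d_i$. That is too lossy.

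Instead I would run the argument one level higher: produce in short length a whole diagonal-free copy of $\pi_i(X)$. Concretely, for each $x\in X$ we seek $\tilde x_i\in G$ that agrees with $x$ in coordinate $i$ and is trivial on a large set of coordinates. Then words of length $d_i$ in the $\tilde x_i$ cover $T_i$ cheaply, and
\[
    \len_X(G)\le \sum_i \len_X(T_i)\le n\cdot d_i\cdot \max_x \len_X(\tilde x_i).
\]
To build such elements, I would kill coordinates $j\ne i$ by commutators, following the halving recursion of \Cref{lem:wilson-commutator-argument}. Taking $[x,w]$ with $w$ a short element that is nontrivial at $i$ but commutes with the $j$-coordinates preserves information at $i$. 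Alternatively, since pairwise non-isomorphic or "independent" factors $T_i,T_j$ give independent projections $\pi_{\{i,j\}}(G)=T_i\times T_j$, we can find a short word vanishing in $T_j$ but not in $T_i$. The length of such a word is bounded in terms of $\diam(T_j)$ and $r$, using laws or short identities and the fact that $\pi_{ij}(G)$ is a full product unless $\pi_j$ is a twisted diagonal of $\pi_i$. Isomorphic-diagonal pairs cannot occur, because $G$ is the full product.

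The main obstacle is controlling the cost of separating a pair of coordinates $(i,j)$. We need an element of length $\ll r\,\diam(T)$ that is trivial at $j$ but not at $i$. I would try the following: pick $a\in B_X(\diam)$ with $\pi_j(a)=1\ne\pi_i(a)$. This can be attempted by noting that $\pi_i(X)$ and $\pi_j(X)$ are "non-conjugate" generating tuples, so some word of length $\le\diam$ distinguishes them; the factor $r$ should enter through bounding the number of conjugacy or automorphism checks needed. Then the $n$ separations are combined by nested commutators in the binary-tree fashion of \Cref{lem:wilson-commutator-argument}. That costs a polynomial factor in $n$, and careful bookkeeping should give $n^2$. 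A final summation over the $n$ factors with the extension rule yields $n^3 r\max\diam(T_i)$.
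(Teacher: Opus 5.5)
There is a genuine gap at the step where a short element supported on one coordinate must be turned into a bound for $\len_X(T_i)$. Your first paragraph has the right first half. Let $g\in B_X(R)$ be supported exactly on $\{i\}$ and let $d=\max_j\diam(T_j)$. Conjugating $g$ by $B_X(d)$ yields \emph{every} element of the conjugacy class $C$ of $\pi_i(g)$, as elements of $T_i\times 1$, at length at most $R+2d$. You do not need to pick a few conjugates that generate.

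What is missing is the fact the paper uses at this point, Dona's Proposition~2.4: a nonabelian simple group of rank $r$ has diameter at most $8(5r+7)$ with respect to \emph{any} nontrivial conjugacy class. This gives $\len_X(T_i)\ll r(R+d)$ directly, with no second factor of $\diam(T_i)$. It is the only place the rank enters.

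Instead you switch to ``diagonal-free copies'' $\tilde x_i$, which cannot work:
\begin{itemize}
\item A commutator $[x,w]$ has $i$-th coordinate $[\pi_i(x),\pi_i(w)]$, not $\pi_i(x)$. So killing coordinates by commutators destroys exactly what you want to keep.
\item Elements trivial only on ``a large set'' of coordinates do not generate $T_i\times 1$.
\item Your displayed estimate needs $\len_X(\tilde x_i)\ll n^2 r$, independent of $d$, and nothing justifies that.
\end{itemize}
Your last paragraph then attributes the factor $r$ to the separation step and never says how $\len_X(T_i)$ is bounded. The final summation therefore has nothing to sum.

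The intermediate steps also need fixing.
\begin{itemize}
\item Separating two coordinates costs only $O(d)$, with no $r$. Apply Schreier's lemma to $\pi_{\{i,j\}}(G)=T_i\times T_j$ with normal subgroup $T_i\times 1$. This gives an element of $B_X(2d+1)$ that is trivial at $j$ and nontrivial at $i$.
\item In the binary commutator recursion you must ensure $[a,b]$ stays nontrivial at $i$. The mechanism of \Cref{lem:wilson-commutator-argument} does not supply this here: its key step uses $\pi_j(x)\notin N_j$, which never happens when every factor is its own socle.
\item The paper's device is to carry whole nontrivial conjugacy classes of $T_i$, obtained once by conjugating by $B_X(d)$. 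Two nontrivial normal subsets of a nonabelian simple group cannot commute elementwise, and the set of resulting commutators is again conjugation-invariant.
\end{itemize}
The induction then places a full nontrivial class of $T_i\times 1$ inside $B_X(4^k(4d+1))$ whenever $|J|\le 2^k$, hence inside $B_X(20n^2d)$. With the covering bound this gives $\len_X(T_i)\le 2000n^2rd$, and summing over $i$ gives the theorem.
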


\begin{remark}
    This result was first proved by Babai--Seress in \cite{BS92}*{Lemma~5.4} in the case of alternating groups,
    and later generalized to all finite simple groups by Dona~\cite{dona-direct-products}.
    See the comments in \cite{helfgott18}*{Section~4} for more commentary.
    We give a proof below only to demonstrate that the use of Ore's conjecture can be avoided.
    The proof is similar to that of \Cref{lem:wilson-commutator-argument}.
\end{remark}

\begin{proof}
    Let $d = \max_{1 \le i \le n} \diam(T_i)$.
    Say $G = \gen X$. Fix an index $i \in \{1, \dots, n\}$, without loss of generality $i = 1$, and for any $J \subset \{2, \dots, n\}$ let $\pi_J$ denote the projection $G \to T_1 \times \prod_{j \in J} T_j$.
    For each index $j \ne 1$, Schreier's lemma (\Cref{lem:schreier}) implies that $\pi_{\{j\}}(B_X(2d+1))$ contains a nontrivial element of $T_1 \times 1$.
    By conjugation, it follows that $\pi_{\{j\}}(B_X(4d+1))$ contains a nontrivial conjugacy class of $T_1 \times 1$.

    We claim by induction that for each subset $J \subset \{2, \dots, n\}$ of cardinality $|J| \le 2^k$, the set $\pi_J(B_X(4^k (4d+1)))$ contains a nontrivial conjugacy class in $T_1 \times 1^J$.
    This holds for $k = 0$ by the previous paragraph, so assume $k > 0$
    and let $J \subseteq \{2, \dots, n\}$ be a set of cardinality $|J| \le 2^k$.
    Partition $J$ into two sets $J_1$ and $J_2$ each of cardinality at most $2^{k-1}$.
    By induction, $\pi_{J_i}(B_X(4^{k-1}(4d+1)))$ contains a nontrivial conjugacy class of $T_1 \times 1^{J_i}$ for $i = 1, 2$.
    By taking commutators, it follows that $\pi_J(B_X(4^k(4d+1)))$ contains a nontrivial conjugacy class of $T_1 \times 1^J$, as claimed.

    Applying the inductive claim to $J = \{2, \dots, n\}$, it follows that $B_X(20 n^2 d)$ contains a nontrivial conjugacy class of $T_1$.
    By \cite{dona-direct-products}*{Proposition~2.4}, $T_1$ has diameter at most $8(5r+7) \le 100r$ with respect to any nontrivial conjugacy class, so $\len_X(T_1) \le 2000 n^2 r d$.
    The index $i = 1$ was arbitrary, so $\len_X(T_i) \le 2000 n^2 r d$ for $1 \le i \le n$, and it follows that $\len_X(G) \le 2000 n^3 r d$.
\end{proof}

\begin{remark}
    Determining the order of growth of the sequence $\diam(A_5^n)$ is a fascinating open problem.
    We have $n \ll \diam(A_5^n) \ll n^3$.
\end{remark}

At this point we can easily complete the proof of the main theorem in the anabelian case.

\begin{theorem}
    \label[theorem]{thm:anabelian-case}
    Let $G \ne 1$ be a finite anabelian group. Then
    \[
        \diam(G)
        \ll (\log|G|)^{23 + 5 \theta_1(G)}
        \max_{T \in \nacomp(G)} \diam(T).
    \]
\end{theorem}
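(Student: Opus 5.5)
We argue by induction on $|G|$, peeling off the socle. Since $G$ is anabelian, $\sol(G) = 1$, so $S = \soc(G) = N_1 \times \cdots \times N_k$ is a nontrivial direct product of nonabelian simple groups, say $S \cong T_1 \times \cdots \times T_n$ with $n \le \log_{60}|S|$. Moreover $G/S$ is again anabelian (quotients of anabelian groups are anabelian), and $\nacomp(G/S) \subseteq \nacomp(G)$, $\theta_1(G/S) \le \theta_1(G)$. Write $D = \max_{T \in \nacomp(G)} \diam(T)$. By the extension rule,
\[
    \len_X(G) \le \len_X(G/S) + \len_X(S) \le \diam(G/S) + \len_X(S),
\]
so it suffices to show that $\len_X(S) \ll (\log |S|)^{23 + 5\theta_1} D$ with an absolute implied constant $C$. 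Then induction gives $\diam(G) \le C\sum (\log|S_j|)^{23+5\theta_1} D \le C(\log|G|)^{23+5\theta_1} D$, where the $S_j$ are the successive socles in the series $G > S \cdot\ldots$, using superadditivity of $x \mapsto x^{23+5\theta_1}$ and $\sum \log|S_j| = \log|G|$. (Here we need $G$ nontrivial at each step; the base case $G = 1$ is trivial.)

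To bound $\len_X(S)$, \Cref{thm:anabelian-socle} provides a nontrivial $g \in S \cap B_X(r_0)$ with $r_0 = (\log|S|)^{18+5\theta_1}$. This $g$ lies in some $N_i$-components; we need generators for all of $S$. Take the normal closure of $g$ in $G$: it is a $G$-normal subgroup of $S$, hence by \Cref{lem:min-normal-nonabelian}(2) a product of some of the $N_i$, but not necessarily all of them. To fix this, I would apply the argument per minimal normal subgroup: for each $i$, apply \Cref{thm:anabelian-socle} (or rather \Cref{prop:monolithic-nonabelian-socle} combined with \Cref{lem:wilson-commutator-argument}) to the quotient $G/C_G(N_i)$ together with a commutator trick, to get a nontrivial element of $N_i$ itself of length $\ll (\log|S|)^{18+5\theta_1}$. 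Alternatively, and more simply, iterate: having found $g$ with normal closure $\prod_{i\in I} N_i$, pass to $G/\prod_{i\in I}N_i$... but that destroys trivial radical only mildly. Instead I would use that $G/C_G(\prod_{i \notin I} N_i)$ has trivial soluble radical with socle $\prod_{i\notin I}N_i$, find a short element there, and combine via \Cref{lem:wilson-commutator-argument}-style commutators. The total overhead is at most a factor $k^2$, absorbed into the exponent budget.

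Once we have $Y_0 \subseteq S$ normally generating $S$ with $\len_X(Y_0) \le (\log|S|)^{19+5\theta_1}$, \Cref{lem:generalized-Milnor} yields $S = \gen Z$ with $\len_X(Z) \le \len_X(Y_0) + 2\log_2|S| \ll (\log|S|)^{19+5\theta_1}$. Then \Cref{thm:dona} gives $\len_Z(S) \ll n^3 r D$, where $r$ is the maximal rank. Since $n \le \log|S|$ and $r \ll \log|S|$ (rank is at most logarithmic in the order), the chain rule gives $\len_X(S) \ll (\log|S|)^{19+5\theta_1} (\log |S|)^4 D = (\log|S|)^{23+5\theta_1} D$, as required.

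The main obstacle is the middle step: producing a normal generating set for all of $S$ rather than for the normal closure of a single element. The first thing I would try is to choose, for each $i$, the element given by \Cref{thm:anabelian-socle} applied to the groups $G/C_G(\prod_{j\neq i}N_j)$; these are monolithic with socle $N_i$, so \Cref{prop:monolithic-nonabelian-socle} produces a short element whose image is nontrivial in $N_i$. Taking commutators with short conjugates of a nontrivial element of $S$, as in \Cref{lem:wilson-commutator-argument}, then isolates components while keeping lengths polynomial in $\log|S|$. Taking $Y_0$ to be the union over $i$ of these elements gives the required normal generating set.
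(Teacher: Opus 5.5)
There is a genuine gap, and it is exactly the step you flagged as the ``main obstacle'': you never produce a short normal generating set $Y_0$ for all of $S$, and the mechanisms you sketch do not do it. First, $G/C_G(\prod_{j\ne i}N_j)$ is not monolithic with socle $N_i$. Since $N_i$ centralizes the other $N_j$, its image in that quotient is trivial. If you use $G/C_G(N_i)$ instead, \Cref{prop:monolithic-nonabelian-socle} gives a short $x\in N_iC_G(N_i)=N_i\times C_G(N_i)$, say $x=yc$ with $1\ne y\in N_i$. But $C_G(N_i)$ is in general larger than $C_S(N_i)=\prod_{j\ne i}N_j$ (e.g.\ $G=H\times H$ with $H=A_5\wr A_5$), so $x\notin S$ in general. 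Your repair, taking commutators with short conjugates of a nontrivial $s\in S$, is circular. The element $[x,s^h]$ does lie in $S$, but writing $s_i$ for the $N_i$-coordinate of $s$, its $N_i$-coordinate is $[y,s_i^h]$. This is trivial unless $s$ already has nontrivial $N_i$-coordinate, which is what you were trying to find. Finally, \Cref{lem:wilson-commutator-argument}, and hence \Cref{thm:anabelian-socle}, only gives a short nontrivial element of $N_1\times\cdots\times N_k$ with no control over \emph{which} coordinates are nontrivial. So it cannot be aimed at a prescribed $N_i$, and the bound $\len_X(Y_0)\le(\log|S|)^{19+5\theta_1}$ is unsupported as written.

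The missing idea is that you do not need all of $S$. Take the short $g$ from \Cref{thm:anabelian-socle} and set $N=\gen{g^G}$.
\begin{itemize}
\item By \Cref{lem:min-normal-nonabelian}, $N$ is a nontrivial product of some of the $N_i$, hence a direct product of nonabelian simple groups.
\item By \Cref{lem:generalized-Milnor}, $N$ is generated by $\{g^h:\len_X(h)\le\log_2|S|\}$, a set of $X$-length $\ll(\log|S|)^{18+5\theta_1}$.
\item \Cref{thm:dona}, with $n,r\ll\log|S|$, then gives $\len_X(N)\ll(\log|S|)^{22+5\theta_1}D$, and $\len_X(G)\le\len_X(N)+\diam(G/N)$.
\end{itemize}
This is precisely the ``iterate'' option you set aside, and your worry about it is unfounded. $G/N$ is a quotient of an anabelian group, hence anabelian, as you yourself noted for $G/S$. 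So it has trivial soluble radical, and the same step applies to it verbatim, with $\theta_1$ and $D$ not increasing. Each step removes at least one composition factor, so there are at most $\log_{60}|G|$ steps. This step count, not superadditivity over socle layers, is what raises the exponent from $22$ to the $23+5\theta_1$ in the statement.
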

\begin{proof}
    Let $\theta_1 = \theta_1(G)$ and $d = \max_{T \in \nacomp(G)} \diam(T)$.
    Let $S = \soc(G)$.
    Suppose that $G = \gen X$.
    By \Cref{thm:anabelian-socle}, there is a nontrivial element $g \in S$ such that
    \[
        \len_X(g) \le (\log |S|)^{18 + 5 \theta_1}.
    \]
    Let $N = \gen{g^G}$. Then $N = \gen{Y}$ where
    \[
        Y = \{g^h : \len_X(h) \le \log_2 |S|\},
        \qquad \len_X(Y) \ll (\log |S|)^{18 + 5 \theta_1}.
    \]
    Also $N$ is a normal subgroup of $G$ contained in $S = \soc(G)$, so by \Cref{lem:min-normal-nonabelian} we have $N = \prod_{i=1}^n T_i$ for some nonabelian simple groups $T_1, \dots, T_n$.
    Applying \Cref{thm:dona},
    \[
        \len_X(N) \le \len_X(Y) \diam(N) \ll (\log |S|)^{18 + 5 \theta_1} \cdot n^3 r d,
    \]
    where $r$ is the maximum rank of $T_1, \dots, T_n$. Crudely bounding
    \[
        n \ll \log |S|,
        \qquad
        r \ll \log |S|,
    \]
    we have
    \[
        \len_X(N) \ll (\log |S|)^{22 + 5 \theta_1} d
        \ll (\log |G|)^{22 + 5 \theta_1} d.
    \]

    Now we use
    \[
        \len_X(G) \le \len_X(N) + \len_X(G/N) \le \len_X(N) + \diam(G/N)
    \]
    (from \Cref{lem:extension-rule})
    and induction. We get that
    \[
        \len_X(G) \ll m (\log |G|)^{22 + 5 \theta_1} d
    \]
    where $m$ is the composition length of $G$. Since $m \le \log_{60} |G|$, we are done.
\end{proof}

\section{Arbitrary finite groups}

In this section we consider arbitrary finite groups by combining our results for soluble groups and for groups with trivial soluble radical.

\begin{lemma}
    \label[lemma]{lem:inductive-lemma}
    Let $G = \gen X$ be a finite group.
    If $G$ is insoluble then there is an insoluble normal subgroup $N \nsgp G$ such that
    \[
        \len_X(N) \ll \eps(G) (\log |G|)^{30 + 5 \theta_1(G)} \max_{T \in \nacomp(G)} \diam(T).
    \]
\end{lemma}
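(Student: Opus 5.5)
Let $R = \sol(G)$ and $\bar G = G/R$, which is nontrivial with trivial soluble radical since $G$ is insoluble. Write $\theta_1 = \theta_1(G)$ and $d = \max_{T \in \nacomp(G)} \diam(T)$; note $\theta_1(\bar G) \le \theta_1$ and $\nacomp(\bar G) = \nacomp(G)$. The plan is to locate a short element of $G$ mapping into $\soc(\bar G)$, take its normal closure to get an insoluble normal subgroup modulo $R$, and then absorb the soluble part using \Cref{thm:main-soluble}.

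\emph{Step 1.} Apply \Cref{thm:anabelian-socle} to $\bar G = \gen{\bar X}$. This gives $g \in G$ with $\len_X(g) \le (\log|G|)^{18+5\theta_1}$ and $1 \ne gR \in \soc(\bar G)$. By \Cref{lem:generalized-Milnor}, the normal closure $\gen{g^G}$ is generated by $Y_0 = \{g^h : h \in B_X(\log_2|G|)\}$, so $\len_X(Y_0) \ll (\log|G|)^{18+5\theta_1}$. Let $\bar N = \gen{g^G}R/R$. It is a nontrivial normal subgroup of $\bar G$ contained in the socle, so by \Cref{lem:min-normal-nonabelian} it is a product $T_1 \times \cdots \times T_n$ of nonabelian simple groups. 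Exactly as in the proof of \Cref{thm:anabelian-case}, \Cref{thm:dona} then gives
\[
\len_X(\bar N) \le \len_X(Y_0) \diam(\bar N) \ll (\log|G|)^{22+5\theta_1} d.
\]

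\emph{Step 2.} Take $N = \gen{g^G}$. This is a normal subgroup of $G$, insoluble because $NR/R = \bar N \ne 1$, and it is generated by $Y_0$. We have $\len_X(N) \le \len_X(N/(N\cap R)) + \len_X(N \cap R)$. The first term is controlled by Step 1, after a small adjustment: $NR/R$ is the image of $N$, and the words in $Y_0$ witnessing $\len_X(\bar N)$ give elements of $N$. So it remains to bound $\len_X(N\cap R)$, where $N \cap R$ is a soluble normal subgroup. For this I would use Schreier's lemma (\Cref{lem:schreier}(1)) applied inside the group $N = \gen{Y_0}$ with normal subgroup $N\cap R$. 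That yields generators $Z$ of $N\cap R$ with
\[
\len_{Y_0}(Z) \le 2\diam(N/(N\cap R)) + 1 \ll (\log|G|)^4 d
\]
by \Cref{thm:dona}. Then \Cref{thm:main-soluble} bounds $\diam(N\cap R)$ by roughly $\eps(G)(\log|G|)^8$. Here one must check that the derived-series factors of $N \cap R$ have exponent at most $\eps(G)$. This needs care, since $(N\cap R)^{(i)}$ is normal in $G$, so $\exp$ of its abelianization is bounded by $\eps(G)$ as defined. Chaining gives
\[
\len_X(N\cap R) \ll (\log|G|)^{18+5\theta_1} \cdot (\log|G|)^{4} d \cdot \eps(G)(\log|G|)^{8} = \eps(G)(\log|G|)^{30+5\theta_1} d,
\]
which matches the claimed exponent $30+5\theta_1$ exactly. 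This strongly suggests this is the intended route.

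\emph{Main obstacle.} The delicate point is Step 2's use of Schreier inside $N$. The length of the generators of $N\cap R$ with respect to $Y_0$ must involve $\diam(N/(N\cap R))$, which is a diameter of a product of simple groups (hence Dona's bound with $n^3 r \ll (\log|G|)^4$), rather than $\diam(\bar G)$. One also has to confirm that $N$ really is generated by $Y_0$ as a group, not just modulo $R$; this holds because \Cref{lem:generalized-Milnor} is applied in $G$ to the normal closure of $g$. If the exponent bookkeeping fails, the fallback is to replace $N$ by its last term of the derived series (the soluble residual $N^{(\infty)}$), which is still insoluble and normal. Its generators could be produced via \Cref{lem:Xi}-style commutator descent, at the cost of a comparable polylog factor.
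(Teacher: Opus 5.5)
Your proof is correct and is essentially the paper's argument. The paper also takes $N = \gen{g^G}$ with $g$ from \Cref{thm:anabelian-socle} and generators from \Cref{lem:generalized-Milnor}, then bounds $\diam(N) \ll \diam(\sol(N))\,\diam(N/\sol(N)) \ll \eps(G)(\log|G|)^{12} d$ via \Cref{lem:schreier}(2), \Cref{thm:dona} and \Cref{thm:main-soluble}; your Step 2 is the same computation with Schreier's part (2) unpacked into the extension rule plus part (1), and your fallback via the soluble residual is not needed.
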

\begin{proof}
    Define $\eps = \eps(G)$, $\theta_1 = \theta_1(G)$, and
    \[d = \max_{T \in \nacomp(G)} \diam(T).\]
    Let $G^* = G / \sol(G)$ and let $g \mapsto g^*$ be the natural quotient map.
    Applying \Cref{thm:anabelian-socle} to $G^* = \gen{X^*}$ and $S = \soc(G^*)$, there is an element $g \in G$ with $1 \ne g^* \in S$ and
    \[
        \len_X(g) = \len_{X^*}(g^*) \le (\log |S|)^{18 + 5 \theta_1}.
    \]
    Let $N = \gen{g^G} \nsgp G$.
    By \Cref{lem:generalized-Milnor}, $N = \gen Z$ where
    \begin{align*}
        \len_X(Z)
        \le \len_X(g) + 2 \sublen(N)
        &\ll (\log |S|)^{18 + 5 \theta_1} + \log |\sol(G)| \\
        &\ll (\log |G|)^{18 + 5 \theta_1}.
    \end{align*}
    Now $1 \ne N^* \nsgp S$, so $N^*$ is a nontrivial direct product of nonabelian composition factors of $G$.
    By \Cref{thm:dona}, it follows that
    \[
        \diam(N^*) \ll (\log |N^*|)^4 d \le (\log |G|)^4 d.
    \]
    On the other hand $N \cap \sol(G) = \sol(N)$, so $N^* \cong N / \sol(N)$, and by \Cref{thm:main-soluble} we have
    \[
        \diam(\sol(N)) \ll \eps_0(\sol(N)) (\log |\sol(N)|)^8 \ll \eps (\log |G|)^8.
    \]
    Thus by Schreier's lemma (\Cref{lem:schreier}) we have
    \begin{align*}
        \diam(N)
        &\ll \diam(\sol(N)) \diam(N / \sol(N)) \\
        &\ll \eps (\log |G|)^{12} d.
    \end{align*}
    Since $N = \gen Z$, it follows that
    \[
        \len_X(N) \le \len_X(Z) \diam(N)
        \ll \eps (\log|G|)^{30 + 5\theta_1} d,
    \]
    as claimed.
\end{proof}

We can now give the proof of \Cref{thm:main} in complete generality.

\begin{proof}[Proof of \Cref{thm:main}]
    Let $G = \gen X$ be a finite group.
    Define $\eps = \eps(G)$, $\theta_1 = \theta_1(G)$, and
    \[
        d = \max\{\diam(T) : T \in \nacomp(G)\} \cup \{1\}.
    \]
    If $G$ is insoluble then \Cref{lem:inductive-lemma} assures the existence of an insoluble normal subgroup $N \nsgp G$ such that
    \begin{align*}
        \len_X(G)
        &\le \len_X(G/N) + \len_X(N)
        \le \len_X(G/N) + c \eps (\log |G|)^{30 + 5 \theta_1} d,
    \end{align*}
    where $c > 0$ is some constant.
    If $G/N$ is again insoluble, a similar bound applies to $\len_X(G/N)$, and so on.
    If $G$ has $m$ nonabelian composition factors, it follows by induction that there is a normal subgroup $N \nsgp G$ with $G/N$ soluble such that
    \[
        \len_X(G) \le \len_X(G/N) + c m \eps (\log |G|)^{30 + 5 \theta_1} d.
    \]
    Note that $m \le \log_{60}|G| \le \log |G|$, while by \Cref{thm:main-soluble} we have
    \[
        \len_X(G/N) \le \diam(G/N) \ll \eps_0(G/N) (\log |G/N|)^8 \le \eps (\log |G|)^8.
    \]
    Thus
    \[
        \len_X(G) \ll \eps (\log |G|)^8 + \eps (\log |G|)^{31 + 5\theta_1} d.
    \]
    This completes the proof of the theorem.
\end{proof}

\section{Permutation groups}
\label[section]{sec:perm-groups}

To apply our results to permutation groups, we need to bound the exponent parameter $\eps(G)$.
For this we have the following convenient lemma, which follows easily from results of Guralnick~\cite{guralnick}.

\begin{lemma}
    \label[lemma]{lem:guralnick}
    Let $G \le \Sym(n)$ be a transitive group and let $N \nsgp G$.
    Assume the orbits of $N$ have length $m$.
    Then $\exp(N / N') \le m$.
    In particular $\eps(G) \le n$.
\end{lemma}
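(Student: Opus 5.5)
The plan is to reduce to a statement about abelian quotients of transitive-ish groups and then invoke Guralnick's theorem on exponents of abelian quotients. Guralnick~\cite{guralnick} proved that if $H \le \Sym(m)$ is transitive, then $\exp(H/H')$ divides $m$; in fact more is true: any abelian quotient $H/K$ of a transitive group of degree $m$ has exponent at most $m$. I will take this as the input.

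Let $N \nsgp G$ with $G$ transitive. Since $N$ is normal in a transitive group, its orbits form a block system, so they all have the same length $m$. Let $\Omega_1, \dots, \Omega_t$ be the $N$-orbits and let $N_i$ be the transitive group induced by $N$ on $\Omega_i$, of degree $m$. Then $N$ is a subdirect subgroup of $N_1 \times \cdots \times N_t$, and the projection $N \to N_i$ maps $N'$ onto $N_i'$. Hence $N/N'$ maps onto each $N_i/N_i'$.

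To bound $\exp(N/N')$, I plan to show directly that $x^m \in N'$ for every $x \in N$. The cleanest route I see uses the transfer, which is essentially how Guralnick's argument goes. Take a point $\omega \in \Omega_1$ and the stabilizer $N_\omega$, a subgroup of index $m$ in $N$. The transfer $V : N \to N_\omega/N_\omega'$ composed with the restriction map is not what we want; instead use the standard fact that for $H \le N$ of index $m$, $x^m N'$ equals the image of $V(x)$ under $H/H' \to N/N'$, where $V$ is the transfer of $N$ into $H$. So it suffices to find $\omega$ with $V(x) \in N_\omega N'/N'$ trivial, which is not automatic.

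Rather than fight this, I would cite Guralnick's result in the form that applies to $N$ itself. Although $N$ is not transitive, all its orbits have the same length $m$, and each point stabilizer $N_\omega$ has index $m$. The point is to show that the kernel-free abelian quotient $N/N'$ embeds in $\prod_i N_i/N_i'$. Since $\bigcap_i \ker(N \to N_i) = 1$, we need $N' = \bigcap_i (N' \ker_i)$; this can fail for subdirect products, which is the expected main obstacle. I would handle it by using normality in the transitive group $G$. The group $G$ permutes the $\Omega_i$ transitively, so the $N_i$ are all permutation isomorphic. Then I would apply Guralnick's statement to the transitive group $NG_{\Omega_1}$, or more simply quote Guralnick's lemma that for $N \nsgp G$ with $G$ transitive and $N$-orbits of size $m$, $\exp(N/N')$ divides $m$. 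I expect this to be exactly the content of \cite{guralnick} (he treats normal subgroups of transitive groups), so the proof would be a citation plus the observation about equal orbit lengths.

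The ``in particular'' clause then follows. Every $N \nsgp G$ has orbits of a common length $m \le n$, so $\exp(N/N') \le m \le n$. Taking the maximum over $N$ gives $\eps(G) \le n$.
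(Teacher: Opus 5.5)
Your proposal has a genuine gap. You correctly identify the only nontrivial step, passing from the transitive constituents $N_i$ to $N$ itself, but you never carry it out. Instead you fall back on a citation that is false as stated. A transitive group need not have $\exp(H/H')$ dividing its degree: take $N = G = S_m$ with $m \ge 3$ odd, so $\exp(N/N') = 2 \nmid m$. And with ``$\le$'' in place of ``divides'', the lemma you propose to quote is literally the statement to be proved.

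Applying the transitive case to $NG_{\Omega_1}$ does not help either. Since $N \le G_{\Omega_1}$, this only gives information about the group induced on $\Omega_1$, which does not reach $N/N'$.

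The obstacle is real, and the abelianizations of the constituents cannot resolve it. Let $R = \SL_2(5)$ act transitively on the $24$ nonzero vectors of $\F_5^2$. Then $N = \{(x, \pm x) : x \in R\} \cong R \times C_2$ is subdirect in $R^2$ and normal in the transitive group $N \rtimes \langle \tau \rangle$ of degree $48$, where $\tau$ swaps the two copies. Every $N_i/N_i'$ is trivial, yet $N/N' \cong C_2$.

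The missing idea is Guralnick's group invariant $\gamma$ (from Section~2 of his paper). It has four properties:
(i) it equals the exponent on abelian groups;
(ii) it does not increase under quotients;
(iii) it takes the same value on $R$ and on any subdirect subgroup of a power $R^k$;
(iv) it is at most $m$ for a transitive group of degree $m$.

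You did find the right structural input: normality in the transitive group $G$ makes all constituents isomorphic to a single transitive $R$ of degree $m$, so $N$ is subdirect in $R^{n/m}$. The paper then concludes via
\[
    \exp(N/N') = \gamma(N/N') \le \gamma(N) = \gamma(R) \le m.
\]
Step (iii) is exactly what your argument lacks, and by the example above $\gamma(R)$ must detect more than $R/R'$. The ``in particular'' clause then follows as you say.
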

\begin{proof}
    Let $R$ be the transitive group induced by $N$ on one of its orbits. Then $N$ is subdirect in $R^{n/m}$.
    Let $\gamma(H)$ denote the group invariant introduced in \cite{guralnick}*{Section~2} (denoted there by $\mu(H)$).
    By \cite{guralnick}*{Lemmas~2.1, 2.3, and Theorem~4.1(a)}, we have
    \[
    \exp(N/N') = \gamma(N/N') \le \gamma(N) = \gamma(R) \le m.\qedhere
    \]
\end{proof}

The following lemma is well-known; see Kov\'{a}cs--Praeger~\cite{KP00} for a much more general result.

\begin{lemma}
    \label[lemma]{lem:simple-sections-sym-n}
    If $T$ is simple and $T \cong G / N$ with $N \nsgp G \le S_n$ then $\mu(T) \le n$.
\end{lemma}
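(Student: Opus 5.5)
We argue by induction on $n$, reducing to the case where $G$ is a simple group acting faithfully. Write $\bar G = G/N \cong T$. Replacing $G$ by a minimal subgroup $H \le G$ with $HN = G$ does no harm: $H/(H\cap N) \cong T$ and $H \le S_n$. So we may assume every proper subgroup of $G$ maps into a proper subgroup of $T$, i.e.\ $N \le \Phi(G)$, the Frattini subgroup.

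\emph{Intransitive case.} Suppose $G$ is intransitive, with orbits $\Omega_1, \dots, \Omega_t$. Let $G_i$ be the group induced on $\Omega_i$, so that $G$ is subdirect in $\prod_i G_i$. Since $T$ is a composition factor of $G$, it is a composition factor of some $G_i$; this follows from the argument of \Cref{lem:mucf-subdirect}, since the composition factors of $G$ are among those of the $G_i$. More precisely, $T$ is a section $A/B$ of $G_i$ with $B \nsgp A \le G_i \le \Sym(\Omega_i)$. Passing to $A$, we may therefore replace $(G,n)$ by $(A,|\Omega_i|)$, and induction applies when $|\Omega_i| < n$. We thus want the statement in the slightly more general form: if $T$ is a composition factor of some subgroup of $S_n$, then $\mu(T) \le n$. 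That form is also what makes the reductions clean, so we will prove it.

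\emph{Transitive case.} Now let $G \le S_n$ be transitive with $T$ a composition factor, and let $H = G_\alpha$ be a point stabilizer, so $|G:H| = n$. If $HN \ne G$, then $HN/N$ is a proper subgroup of $T$ of index at most $|G:H| = n$, giving $\mu(T) \le n$. Otherwise $HN = G$ and $T \cong H/(H\cap N)$ is a section of $H$. Since $H$ fixes $\alpha$, it acts on $n-1$ points, with kernel $K$ say; it is faithful on the remaining points only after accounting for that kernel. The composition factors of $H$ are those of $H/K \le S_{n-1}$ together with those of $K$, and $K$ also acts faithfully on the other points. So $T$ is a composition factor of a subgroup of $S_{n-1}$, and induction gives $\mu(T) \le n-1$.

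The general statement reduces to these two cases by taking a composition factor of a subgroup and applying the transitive or intransitive case to that subgroup. The main subtlety is bookkeeping: passing between "quotient of $G$" and "composition factor of a subgroup", since the latter is the hereditary notion needed for the induction. This is harmless, because $T \cong G/N$ is in particular a composition factor of $G$.
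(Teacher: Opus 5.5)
Your argument is correct and is essentially the paper's. The decisive step is the same point-stabilizer dichotomy: either $G_\alpha N<G$ gives a proper subgroup of $T$ of index at most $|G:G_\alpha|\le n$, or $T\cong G_\alpha/(G_\alpha\cap N)$ with $G_\alpha$ acting faithfully on $n-1$ points, so induction applies (the paper phrases this as a minimal counterexample). The intransitive case and the Frattini reduction are superfluous, since $|G:G_\alpha|\le n$ holds without transitivity, and your kernel $K$ is simply trivial because $G$ is faithful on all $n$ points.
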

\begin{proof}
    Let $n$ be the smallest integer such that $T \cong G / N$ for some subgroups $N \nsgp G \le S_n$.
    If $G_x N = G$ then $G_x / G_x \cap N \cong G_x N / N = G / N \cong T$, contradicting the minimality of $n$, so $G_x N < G$.
    It follows that $G_x N / N$ is a proper subgroup of $G / N \cong T$ of index $|G : G_x N| \le |G : G_x| \le n$.
\end{proof}

We can now prove the main corollary listed in the introduction on transitive permutation groups.

\begin{proof}[Proof of \Cref{cor:transitive}]
    Let $G \le S_n$ be transitive. Let $\eps = \eps(G)$ and $\theta_1 = \theta_1(G)$. By \Cref{thm:main},
    \[
        \diam(G) \ll \eps (\log |G|)^{31 + 5 \theta_1} \diam(T)
    \]
    for some $T \in \nacomp(G)$, or else the same without the $\diam(T)$ factor if $\nacomp(G) = \emptyset$.
    By \Cref{lem:guralnick}, $\eps \le n$, and $\log|G| \le n \log n$.
    By \Cref{lem:simple-sections-sym-n}, $\mu(T) \le n$, so $\diam(T) \le n^{\theta_1}$.
    Thus
    \[
        \diam(G) \ll \eps (n \log n)^{31 + 5 \theta_1} n^{\theta_1}
        \ll n^{32 + 7 \theta_1}.
    \]
    This proves the first statement.

    Now assume that the following two statements hold:
    \begin{enumerate}[(a)]
        \item $\diam(A_n) \le n^{O(1)}$,
        \item $\diam(G) \le |V|^{O(1)}$ for simple classical groups $G \le \PGL(V)$.
    \end{enumerate}
    In the latter case $\mu(G) \gg |V|^{1/2}$ (see \cite{kleidman--liebeck}*{Theorem~5.2.2}), while obviously $\mu(A_n) = n$, so the two statements imply $\theta_1(T) \ll 1$ for all finite simple groups of rank larger than $8$.
    Since Babai's conjecture is known for simple groups of bounded rank, it follows that $\theta_1(T) \ll 1$ for all simple groups,
    which implies that $\theta_1(G) \ll 1$ for all finite groups.
    Thus \Cref{conj:folk} holds.
\end{proof}

\begin{proof}[Proof of \Cref{cor:transitive-2}]
    By \Cref{cor:transitive} we have $\diam(G) \ll n^{32 + 7\theta_1}$ where $\theta_1 = \theta_1(G)$.
    Assuming $\theta_1 > 1$, let $T \in \nacomp(G)$ realize the maximum in the definition of $\theta_1(G)$.
    If $T$ has bounded rank then $\theta_1 \ll 1$.
    If $T \cong A_m$ for some $m \ge 5$ then
    \[\theta_1 \ll (\log m)^3 \log \log m\]
    by \Cref{eq:helfgott--seress}.
    If $T \le \PGL(V)$ is classical then by \Cref{lem:simple-sections-sym-n} and \cite{kleidman--liebeck}*{Theorem~5.2.2} again we have $n \ge \mu(T) \gg |V|^{1/2}$, so
    \[\theta_1(T) \ll (\log \dim V)^2 \ll (\log \log n)^2\]
    by \eqref{eq:HMPQ}.
\end{proof}

In particular if $G \le S_n$ is transitive and soluble then $\theta_1(G) = 1$, so \Cref{cor:transitive} gives
\(
    \diam(G) \ll n^{39}.
\)
In fact we can obtain
\[
    \diam(G) \le n^{6.16}
\]
by combining the bound $\diam(G) \le \eps(G) 4^{L-1} (\log_2|G|)^2$ from \Cref{thm:main-soluble} with \Cref{lem:guralnick} and the following standard bounds for order and the derived length of a transitive soluble group:
\[
    |G| \le 24^{(n-1)/3}, \qquad
    L \le (5/2) \log_3 n.
\]
(see \cite{DM}*{Theorem~5.8B and Exercise~5.8.7}).

Let us now invest some energy to reduce the exponent in the result for transitive soluble groups.
As usual, $\gamma_3(G)$ denotes the subgroup generated by the triple commutators $[x, y, z] = [[x,y],z]$ with $x, y, z \in G$.
If $G = \gen X$ then $\gamma_3(G)$ is normally generated by the triple commutators $[x, y, z]$ with $x, y, z \in X$.

\begin{lemma}
    [Variant of \Cref{lem:gen-Milnor-N'}]
    \label[lemma]{lem:gen-Milnor-gamma_3}
    Let $G = \gen X$ and let $N = \gen Y \nsgp G$ be a normal subgroup with $\gamma_3(N)$ finite. Then $\gamma_3(N) = \gen Z$ where
    \[
        \len_X(Z) \le 10\len_X(Y) + 2 \sublen(\gamma_3(N)).
    \]
\end{lemma}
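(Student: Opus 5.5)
The plan mirrors the proof of \Cref{lem:gen-Milnor-N'}: first produce a short set of elements that normally generates $\gamma_3(N)$ in $G$, then upgrade it to an honest generating set with \Cref{lem:generalized-Milnor}.

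Take
\[
    Z_0 = \{[y_1, y_2, y_3] : y_1, y_2, y_3 \in Y\}.
\]
We first bound lengths. A commutator $[a,b] = a^{-1}b^{-1}ab$ satisfies $\len_X([a,b]) \le 2\len_X(a) + 2\len_X(b)$. Hence
\[
    \len_X([y_1,y_2]) \le 4\len_X(Y),
\]
and then
\[
    \len_X([[y_1,y_2],y_3]) \le 2\cdot 4\len_X(Y) + 2\len_X(Y) = 10\len_X(Y).
\]
This is exactly the constant in the statement, so $\len_X(Z_0) \le 10\len_X(Y)$.

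Next we must check that $Z_0$ normally generates $\gamma_3(N)$ in $G$. This is the only step with real content; I expect it to be the main obstacle, though it is standard. The paper has just observed that if $N = \gen Y$ then $\gamma_3(N)$ is normally generated in $N$ by the triple commutators $[y_1,y_2,y_3]$ with $y_i \in Y$.

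To justify this, let $K$ be the normal closure of $Z_0$ in $N$. Working in $\bar N = N/K$, every $[\bar y_1, \bar y_2]$ commutes with every $\bar y_3$, and therefore with all of $\bar N$. Since the centre is a subgroup, the subgroup generated by the elements $[\bar y_1,\bar y_2]$ is central. Modulo this central subgroup, all the generators $\bar y_i$ commute, so the quotient of $\bar N$ by it is abelian and $\bar N'$ is generated by the central elements $[\bar y_1,\bar y_2]$. It follows that $\bar N' \le Z(\bar N)$, so $\gamma_3(\bar N) = 1$, that is, $\gamma_3(N) \le K$. The reverse inclusion $K \le \gamma_3(N)$ is clear. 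Since $N \nsgp G$, the subgroup $\gamma_3(N)$ is characteristic in $N$ and hence normal in $G$. Therefore $\gen{Z_0^G} = \gamma_3(N)$.

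Finally, apply \Cref{lem:generalized-Milnor} to the finite normal subgroup $\gamma_3(N) = \gen{Z_0^G}$ of $G = \gen X$. This yields a generating set $Z$ of $\gamma_3(N)$ with
\[
    \len_X(Z) \le \len_X(Z_0) + 2\sublen(\gamma_3(N)) \le 10\len_X(Y) + 2\sublen(\gamma_3(N)),
\]
as required.
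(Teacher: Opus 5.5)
Your proposal is correct and is essentially the paper's proof. It uses the same set $Z_0$ of triple commutators, the same length count giving $10\len_X(Y)$, and the same application of \Cref{lem:generalized-Milnor} to $\gamma_3(N) = \gen{Z_0^G}$. The only addition is that you explicitly verify that $Z_0$ normally generates $\gamma_3(N)$, via the quotient $N/K$ and the fact that $\gamma_3(N)$ is characteristic in $N$; the paper simply asserts this standard fact.
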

\begin{proof}
    Let $Z_0 = \{[y_1, y_2, y_3] : y_1,y_2,y_3 \in Y\}$. The triple commutator word $[x,y,z]$ has length $10$, so $\len_X(Z_0) \le 10 \len_X(Y)$, and $Z_0$ normally generates $\gamma_3(N)$. Apply \Cref{lem:generalized-Milnor} to $\gamma_3(N) = \gen{Z_0^G}$.
\end{proof}

\begin{proof}[Proof of \Cref{cor:soluble-transitive}]
    Let $G \le S_n$ be soluble and transitive,
    and let $H$ be a maximal soluble subgroup of $S_n$ containing $G$. Then $H = P_h \wr \cdots \wr P_1$ for some primitive soluble groups $P_j \le \Sym(d_j)$ where $n = d_1 \cdots d_h$.
    By \cite{EM25}*{Theorem~4.6}(i), for each $j$ there is a series of subgroups
    \[
        P_j = P_{j,0} > P_{j,1} > \cdots > P_{j,\ell_j} = 1
    \]
    with the following properties:
    \begin{enumerate}
        \item for each $i = 0, 1, \dots, \ell_j - 1$, $P_{j,i+1}$ is either $(P_{j,i})'$ or $\gamma_3(P_{j,i})$,
        \item if $m_j$ is the number of nonabelian factors $P_{j,i} / P_{j,i+1}$ then
        \[
            \ell_j + m_j \log_4(5/2) \le 3 \log_4(d_j),
        \]
        \item $\ell_j \ll 1 + \log \log_2 d_j$ by Zassenhaus--Newman~\cite{newman}.
    \end{enumerate}
    By stringing these series together for $j = 1, \dots, h$ we get a series
    \[
        G = G_0 > G_1 > \cdots > G_L = 1
    \]
    with the following properties:
    \begin{enumerate}[resume]
        \item for each index $i = 0, \dots, L-1$, $G_{i+1}$ is either $(G_i)'$ or $\gamma_3(G_i)$,
        \item for each index $i \ge \ell_1 + \cdots + \ell_j$, $G_i \le (P_h \wr \cdots \wr P_{j+1})^{d_1\cdots d_j}$, so the orbits of $G_i$ have length at most $d_{j+1} \cdots d_h = n / d_1 \cdots d_j$.
    \end{enumerate}
    Note in particular that $G_i \nsgp G$ for each $i$.

    Now let $X$ be a generating set for $G$ and define sets $X_i \subseteq G_i$ for $i = 0 \dots, L-1$ inductively as follows, starting with $X_0 = X$.
    If $0 < i < L$ and $G_i = (G_{i-1})'$, apply \Cref{lem:gen-Milnor-N'} to obtain $X_i \subseteq G_i$ such that $G_i = \gen{X_i}$ and
    \[
        \len_X(X_i) \le 4 \len_X(X_{i-1}) + 2 \log_2 |G|.
    \]
    Otherwise $G_i = \gamma_3(G_{i-1})$; apply \Cref{lem:gen-Milnor-gamma_3} to obtain $X_i \subseteq G_i$ such that $G_i = \gen{X_i}$ and
    \[
        \len_X(X_i) \le 10 \len_X(X_{i-1}) + 2 \log_2 |G|.
    \]
    Define $a_i = 4$ in the first case and $a_i = 10$ in the second case.
    Then
    \[
        \frac{\len_X(X_i)}{a_1 \cdots a_i} - \frac{\len_X(X_{i-1})}{a_1 \cdots a_{i-1}} \le \frac2{4^i} \log_2 |G|,
    \]
    and we deduce by summing that
    \[
        \frac{\len_X(X_i)}{a_1 \cdots a_i} \le 1 + \frac23 \log_2 |G| \ll n.
    \]
    Thus
    \[
        \len_X(X_i) \ll (a_1 \cdots a_i) n = 4^{i + \log_4(5/2) r_i} n
    \]
    for $0 \le i < L$, where $r_i$ is the number of nonabelian sections $G_{j-1} / G_j$ with $j \le i$.

    Fix an index $i$ with $0 \le i < L$ and let $j \in \{0, \dots, h-1\}$ be the unique index such that
    \[
        \ell_1 + \cdots + \ell_j \le i < \ell_1 + \cdots + \ell_{j+1}.
    \]
    By property \emph{(5)}, the orbits of $G_i$ have length some
    \[
        n_i \le d_{j+1} \cdots d_h = \frac{n}{d_1 \cdots d_j}.
    \]
    Now by properties \emph{(2)} and \emph{(3)}
    \begin{align*}
        \len_X(X_i)
        \ll 4^{\sum_{s=1}^{j+1} (\ell_s + m_s \log_4(5/2))} n
        \ll (d_1 \cdots d_j)^3 (\log_2 d_{j+1})^{O(1)} n.
    \end{align*}

    By \Cref{lem:guralnick}, $\exp(G_i / G_i') \le n_i$ and also $\exp(G_i' / G_{i+1}) \le n_i$.
    If $G_i / G_{i+1}$ is abelian then
    \[
        \diam(G_i / G_{i+1}) \le \exp(G_i / G_{i+1}) \log_2 |G_i : G_{i+1}|
        \le n_i \log_2 |G_i : G_{i+1}|
    \]
    by \Cref{lem:diam-abelian} as before.
    If $G_i / G_{i+1}$ is class-2 nilpotent then, since the commutator subgroup of a class-2 nilpotent group is generated by the commutators of the generators, by \Cref{lem:extension-rule} we have
    \begin{align*}
        \diam(G_i / G_{i+1})
        &\le \diam(G_i / G_i') + 4 \diam(G_i' / G_{i+1}) \\
        &\le n_i \log_2 |G_i : G_i'| + 4 n_i \log_2 |G_i' : G_{i+1}| \\
        &\le 4 n_i \log_2 |G_i : G_{i+1}|.
    \end{align*}
    Now by \Cref{lem:extension-rule} again we have
    \begin{align*}
        \len_X(G)
        &\le \sum_{i=0}^{L-1} \len_X(X_i) \diam(G_i / G_{i+1}) \\
        &\ll \sum_{i=0}^{L-1} \len_X(X_i) n_i \log_2 |G_i:G_{i+1}| \\
        &\ll n \max_{0 \le i < L} \len_X(X_i) n_i \\
        &\ll n^3 \max_{0 \le j < h} (d_1 \cdots d_j)^2 (\log_2 d_{j+1})^{O(1)} \\
        &\le n^5 \max_{0 \le j < h} d_{j+1}^{-2} (\log_2 d_{j+1})^{O(1)}
        \ll n^5.
    \end{align*}
    This completes the proof.
\end{proof}

\begin{remark}
    A simplified version of the above argument using just the derived series gives the exponent $5.16$ instead of $5$.
    Key motivating examples are the iterated wreath products
    \begin{align*}
        G &= \Sym(4) \wr \cdots \wr \Sym(4) \le \Sym(4^h), \\
        G &= \AGL_2(3) \wr \cdots \wr \AGL_2(3) \le \Sym(9^h)
    \end{align*}
    (the latter is the example achieving Dixon's bound on the derived length of a soluble permutation group).
\end{remark}

In the case of a nilpotent group, an easier argument suffices and obtains the exponent $4$.

\begin{proposition}
    Let $G \le S_n$ be a transitive nilpotent group of degree $n$. Then $\diam(G) \le n^4$.
\end{proposition}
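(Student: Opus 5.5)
Since $G$ is a transitive nilpotent group of degree $n$, it is a $p$-group if $n$ is a prime power. In general $G = \prod_p G_p$ is the direct product of its Sylow subgroups, and the orbits of each $G_p$ form a block system. So $G$ is permutationally isomorphic to a subgroup of $\prod_p \Sym(n_p)$ acting on $\prod_p [n_p]$, where $n = \prod_p n_p$ and $G_p$ acts transitively on $n_p$ points. I would first reduce to the $p$-group case. Since $\diam(A\times B)\le \diam(A)+\diam(B)$ fails in general for non-coprime factors, it is better to work directly with the lower central series, which handles the general case uniformly.

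The plan is to mimic the proof of \Cref{thm:main-soluble}, using the lower central series $G=\gamma_1(G) > \gamma_2(G) > \cdots > \gamma_{c+1}(G)=1$ in place of the derived series. The key gain is that, for a nilpotent group generated by $X$, each quotient $\gamma_i(G)/\gamma_{i+1}(G)$ is generated by the images of the left-normed commutators $[x_1,\dots,x_i]$ with $x_j\in X$. So no Milnor-type conjugation step is needed: the generators $X_i$ of $\gamma_i/\gamma_{i+1}$ have $X$-length at most the length of the $i$-fold commutator word, which is $\le 4^{i-1}$ (in fact $w_i$ has length $3\cdot 2^{i-1}-2$ roughly $2^{i+1}$, since $|[u,x]| = 2|u|+2$). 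Then by \Cref{lem:extension-rule}, \Cref{lem:chain-rule} and \Cref{lem:diam-abelian},
\[
\len_X(G)\le \sum_{i=1}^{c} \len_X(X_i)\,\exp(\gamma_i/\gamma_{i+1})\,\log_2|\gamma_i:\gamma_{i+1}|.
\]

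Next I bound each factor. By \Cref{lem:guralnick} applied to $N=\gamma_i(G)\nsgp G$ and then passing to the quotient (the exponent of $\gamma_i/\gamma_{i+1}$ divides the exponent of $N/N'$ since $N'\le\gamma_{i+1}$... indeed $[\gamma_i,\gamma_i]\le\gamma_{2i}\le\gamma_{i+1}$), we get $\exp(\gamma_i/\gamma_{i+1})\le n$. Also $\sum_i\log_2|\gamma_i:\gamma_{i+1}| = \log_2|G| < n$, since a transitive nilpotent group has $|G|\le 2^{n-1}$ (the $p$-group bound $|G|\le p^{(n-1)/(p-1)}$, multiplied over Sylow factors). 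Hence
\[
\len_X(G)\le n\cdot (n-1)\cdot \max_i \len_X(X_i) \le n^2\, 2^{c+1}.
\]

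The main obstacle is bounding the nilpotency class $c$ so that $2^{c+1}\le n^2$ (or, with sharper bookkeeping, so that the total is $\le n^4$). For a transitive $p$-group of degree $p^k$ the class is at most $p^{k-1}$ (the class of the Sylow subgroup $C_p\wr\cdots\wr C_p$), which is far too large for a crude $2^c$ bound. So instead I would weight each layer by its size: rather than taking a max, split the sum, noting that large $i$ forces $|\gamma_i|$ small. The better route is to use the transitive structure. Take a chain of block systems with $G$ acting on $\Sym(n_1)$-quotients. I would then use the series of kernels $K_j$ (pointwise stabilisers of successive block systems), whose orbits have length $n/(d_1\cdots d_j)$, exactly as in the proof of \Cref{cor:soluble-transitive}. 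Within each primitive nilpotent layer $P_j$ is cyclic of prime order $d_j$, so each step is abelian with exponent $\le$ the orbit length. Replacing derived subgroups by the normal subgroups $K_j$ costs a Milnor step (\Cref{lem:generalized-Milnor}, with $\sublen\le \log_2|G|<n$). I would try to show, using nilpotency, that $K_j$ is generated by $[K_{j-1},G]$-type commutators of bounded length relative to $X_{j-1}$, giving $\len_X(X_j)\ll 4^j n$ with $4^{j}\le (d_1\cdots d_j)^2$ since $d_i\ge 2$. Combining, the total is $\ll n\cdot\max_j (d_1\cdots d_j)^2 n\cdot n/(d_1\cdots d_j)\le n^4$. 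Getting the constant exactly to $1$ is the delicate point I expect to need care.
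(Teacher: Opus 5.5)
Your proposal does not yet prove the statement, and the gap is at the step you leave as ``I would try to show''.

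\textbf{The lower-central-series route.} This fails, as you partly recognise. The commutator words have length about $2^i$. The class of a transitive nilpotent group of degree $n$ can be $n/2$: the Sylow $2$-subgroup $C_2\wr\cdots\wr C_2$ of $S_{2^k}$ has class $2^{k-1}$. So the term for $i=c$ in your sum is already of order $2^{c}$, exponential in $n$, and no reweighting removes it.

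\textbf{The block-system route.} You have no way to produce short generators of the kernels $K_j$.
\begin{itemize}
\item The $K_j$ are not verbal subgroups of $K_{j-1}$. So neither commutator words nor \Cref{lem:generalized-Milnor} apply until you already have a short normal generating set for $K_j$, which is the whole difficulty.
\item Commutators $[K_{j-1},G]$ do not supply one. Take $G=C_2\wr C_2=\gen{(12),(13)(24)}\le S_4$ with blocks $\{1,2\},\{3,4\}$. Then $[K_0,G]=G'=\gen{(12)(34)}<K_1=\gen{(12),(34)}$, while $[K_1,G]=\gen{(12)(34)}\not\le K_2=1$.
\item The only general tool left is Schreier's lemma. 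Its generators have length about $2\diam(G/K_j)$, so the diameters multiply.
\end{itemize}
Hence the bound $\len_X(X_j)\ll 4^j n$ is unsupported, and it would in any case give only $\ll n^4$ rather than $\le n^4$.

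\textbf{The missing idea.} The derived series itself already has the orbit-shrinking property you wanted from block systems, so kernels are unnecessary. Let $n_i$ be the common orbit length of $G^{(i)}\nsgp G$.
\begin{itemize}
\item The group induced by $G^{(i)}$ on one of its orbits is transitive nilpotent. On a system of maximal blocks it therefore induces a primitive nilpotent, hence abelian, group.
\item So its derived subgroup, which is the group induced by $G^{(i+1)}$, fixes every block and is intransitive.
\item Hence $n_{i+1}$ is a proper divisor of $n_i$. This gives $n_i\le n/2^i$, and since $n_{L-1}\ge 2$ the derived length satisfies $2^{L-1}\le n/2$.
\end{itemize}
Now combine the paper's lemmas:
\begin{itemize}
\item \Cref{lem:Xi} gives $\len_X(X_i)\le 4^i\log_2|G|\le 4^i n$, using $|G|\le 2^{n-1}$.
\item \Cref{lem:guralnick} gives $\exp(G^{(i)}/G^{(i+1)})\le n_i\le n/2^i$.
\item \Cref{lem:diam-abelian} with \Cref{lem:extension-rule} then yields
\[
\len_X(G)\le n\sum_{i<L}4^i n_i\log_2|G^{(i)}:G^{(i+1)}|\le n^2\,2^{L-1}\log_2|G|\le n^4 .
\]
\end{itemize}
The factor $4$ lost at each Milnor step is paid for by the halving of orbit lengths. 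The number of steps is at most $\log_2 n$, not the class.
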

\begin{proof}
    Similar to the previous proof, but easier. Let
    \[
        G = G^{(0)} > G^{(1)} > \cdots > G^{(L)} = 1
    \]
    be the derived series of $G$ and let $n_i$ be the length of the orbits of $G^{(i)}$. Since primitive nilpotent groups are abelian, $n_{i+1} < n_i$ for each $i < L$.
    It follows that $n_i \le n / 2^i$.
    In particular $L \le \log_2 n$.
    Also $\exp(G^{(i)} / G^{(i+1)}) \le n_i$ by \Cref{lem:guralnick}.
    Now suppose $G = \gen X$. By \Cref{lem:Xi}, $G^{(i)} = \gen{X_i}$ where
    \[
        \len_X(X_i) \le 4^i \log_2|G|.
    \]
    Also, it is well-known that $|G| \le 2^{n-1}$, so $\log_2|G| \le n$.
    Hence
    \begin{align*}
        \len_X(G)
        &\le \sum_{i=0}^{L-1} \len_X(X_i) \diam(G^{(i)} / G^{(i+1)}) \\
        &\le n \sum_{i=0}^{L-1} 4^i n_i \log_2|G^{(i)} : G^{(i+1)}| \\
        &\le n^3 \max_{0 \le i < L} 2^i \le n^4.\qedhere
    \end{align*}
\end{proof}

\begin{remark}
    \label[remark]{rem:bartholdi--bradford}
    The argument above for nilpotent groups can be optimized to at least $\ll n^{3.67}$ or so using more careful arguments.
    However, Bartholdi and Bradford (personal communication)
    have results that imply a quadratic bound $\diam(G) \ll n^2$ for transitive nilpotent groups $G \le S_n$.
    This is close to optimal in view of congruence quotients of the Grigorchuk group: see \Cref{sec:examples}.
    Their beautiful argument uses the group ring in the spirit of Grigorchuk's result on residually nilpotent groups~\cite{grig-residually-nilpotent}.
\end{remark}

In the case of primitive groups $G \le S_n$, let us first recall \Cref{thm:primitive}, which was internally crucial in \Cref{sec:anabelian-socle}: if $G \le S_n$ is primitive then
\[
    \diam(G) \le n^{10 + 5 \theta_1(G)}.
\]
In fact if $|G|$ is less than exponential and Babai's conjecture holds for the nonabelian composition factors of $G$ then there is a nearly linear bound.

\begin{proposition}
    If $G \le S_n$ is primitive,
    \[
    \diam(G) \le n (\log |G|)^{O(\theta_2(G))}.
    \]
\end{proposition}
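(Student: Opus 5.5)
The plan is to split according to whether $G$ has affine type, using in both cases the fact that a primitive group has a short normal series whose factors are controlled by the structure theory of \Cref{sec:palfy-wolf}.

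\emph{Affine case.} Write $G = VH$ with $V = \F_p^d$, $n = p^d$ and $H \le \GL(V)$ irreducible. First I bound $\len_X(G/V) \le \diam(H)$. Since $H$ is the quotient $G/V$, I bound $\diam(H)$ with the general machinery, namely \Cref{cor:theta-2-cor}: $\diam(H) \ll \eps(H)(\log|H|)^{31+11\theta_2}$. Here $\eps(H)$ needs care. Every abelian section $N/N'$ with $N \nsgp H$ acts on $V$ (by Clifford, $V|_N$ is completely reducible), so I expect $\exp(N/N')$ to be bounded polynomially in $d$ and $p^{d}$. This is crude, so the better route is to run the proof of \Cref{thm:main-soluble} while keeping the factor $\exp$ inside the logarithm. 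Concretely, each abelian chief factor of $H$ is either a $p'$-group of exponent dividing some $p^{r}-1$ with $r\mid d$ (coming from $Z(H)$, $E_i/Z$ or $G/H$ in \Cref{thm:suprunenko}), or it lies in a symplectic or wreath part handled recursively. I will accept a bound of $(\log|G|)^{O(1)}$ for the exponents occurring in the series, and I expect this to be the main obstacle; the fallback is Pyber's $|H|_{\mathrm{sol}} \le n^{c}$ together with the fact that the exponent of an abelian section of $\GL_d(p)$ acting through a semilinear torus is at most $p^{d}-1 < n$. That fallback costs a factor $n$, which fits the target only once. Second, $V = \gen{v^G}$ is normally generated by one nonzero vector obtained from Schreier's lemma at cost $\ll \diam(H)$. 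Here I want something better: a nonzero element of $V$ of length $(\log|G|)^{O(\theta_2)}$, found by the Milnor/commutator trick, namely taking $[x,y]$ for suitable short $x,y$ projecting into the Fitting part. Then \Cref{lem:generalized-Milnor} generates $V$ with words of length $(\log|G|)^{O(\theta_2)}$, and $\diam(V) \le (p-1)d \le n$ by \Cref{lem:diam-abelian}. Combining through \Cref{lem:chain-rule} and \Cref{lem:extension-rule} gives $n(\log|G|)^{O(\theta_2)}$.

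\emph{Non-affine case.} Here $\soc(G) = T^k$ with $G$ having trivial soluble radical, so \Cref{thm:anabelian-case} gives $\diam(G) \ll (\log|G|)^{23+5\theta_1}d$ with $d \le (\log|T|)^{\theta_2}$. Using $\theta_1 \le 2\theta_2$ this is $(\log|G|)^{O(\theta_2)}$, which is even better than claimed.

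The key point, and the expected difficulty, is therefore the affine case. There the only factor allowed to be polynomial in $n$ is the linear term coming from $\diam(V)$. Every other contribution, in particular $\eps$ of the soluble part of $H$, must be polylogarithmic in $|G|$. I would first try to show that the abelian chief factors of an irreducible linear group have exponent at most $\log$-polynomial in $|H|$, except for the cyclic scalar/field part $G/H$ and $Z(H)$. Those exceptional factors are cyclic of order $< n$, and their length should be treated as cyclic groups generated by a single short element, contributing at most $n$ additively rather than multiplicatively.
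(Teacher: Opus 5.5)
\textbf{Non-affine case.} You cite \Cref{thm:anabelian-case}, but that theorem needs $G$ to be anabelian. A primitive group with nonabelian socle only has trivial soluble radical, which is weaker. For example, $S_m$ in its natural action has a composition factor $C_2$, and $A_5\wr C_k$ in product action has a quotient $C_k$. So this case is not proved as written. To rescue it you need a bound on $\eps(G)$. \Cref{lem:guralnick} gives $\eps(G)\le n$, which suffices for the target, though not for your ``even better'' polylogarithmic claim.

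\textbf{Affine case: the exponent bound.} Your hoped-for polylogarithmic bound on the exponents is false. For $G=\AGL_1(p)$ we have $H=C_{p-1}$ and $\eps(H)=n-1$. The fallback $\eps(H)\le n$ is true, but the reason is simply $\eps(G/V)\le\eps(G)\le n$ by \Cref{lem:guralnick}. Your semilinear-torus remark does not cover abelian sections coming from the symplectic or wreath parts of \Cref{thm:suprunenko}.

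\textbf{Affine case: the genuine gap.} The nonzero element of $V$ of length $(\log|G|)^{O(\theta_2)}$ is only asserted. Nothing in your sketch produces short elements with images in $F(H)$ whose commutator lies in $V\setminus\{1\}$, and $F(H)$ can even be trivial (e.g.\ $H=\SL_d(2)$, $d\ge 3$). So the only tool you actually have is \Cref{lem:schreier}. It gives generators of $V$ of length $\ll\diam(H)\ll\eps(H)(\log|G|)^{O(\theta_2)}$, and this is then multiplied by $\diam(V)$, which is about $dp/2$. That overshoots. For $H=\Gamma\mathrm{L}_1(p^2)$ we have $d=2$, $p=\sqrt n$ and $\eps(H)=p^2-1$, so the resulting bound is only about $n^{3/2}$.

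\textbf{The missing observation.} No case split is needed. \Cref{cor:theta-2-cor} applies to $G$ itself, and with $\eps(G)\le n$ from \Cref{lem:guralnick} it gives $\diam(G)\ll n(\log|G|)^{31+11\theta_2(G)}$ at once. That is the paper's entire proof. The main theorem pays the factor $\eps$ only once because generators of the soluble layers come either from the Milnor lemma (\Cref{lem:Xi}) or from Schreier over a direct product of nonabelian simple groups (\Cref{lem:inductive-lemma}). They never come from Schreier over a quotient with large abelian sections, which is exactly the step your decomposition falls back on.

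\textbf{How your route could be repaired.} $V$ is the last nontrivial derived term of $G$ in the soluble case, and of $\sol(N)$ for the subgroup $N$ of \Cref{lem:inductive-lemma} otherwise. So $V$ is generated by words of length $(\log|G|)^{O(\theta_2)}$, and your split would then go through. But proving this amounts to re-running the paper's argument.
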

\begin{proof}
    Combine \Cref{cor:theta-2-cor} and \Cref{lem:guralnick}.
\end{proof}

In the case of a primitive soluble group, we have the following explicit bound, which is optimal up to the logarithmic factors, considering for example $G = C_p$ or $G = \AGL_1(p)$.

\begin{proposition}
    Let $G \le S_n$ be soluble and primitive. Then
    \[
        \diam(G) \ll n (\log n)^8.
    \]
\end{proposition}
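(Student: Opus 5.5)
The plan is to apply \Cref{thm:main-soluble} in the form $\diam(G) \le \eps_0 4^{L-1} (\log_2|G|)^2$ and bound each of $\eps_0$, $L$ and $\log|G|$ for a primitive soluble group. Since $G$ is soluble and primitive, it is of affine type: $G = VH$ with $V = \F_p^d$, $n = p^d$, and $H \le \GL_d(p)$ irreducible and soluble. By P\'alfy--Wolf, $|G| \le 24^{-1/3} n^{c}$ with $c \approx 3.25$, so $\log_2 |G| \ll \log n$. That already gives $(\log_2|G|)^2 \ll (\log n)^2$.

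For the exponent parameter I will use \Cref{lem:guralnick}. Each derived subgroup $G^{(i)}$ is normal in the transitive group $G$, so its orbits have length at most $n$, and hence $\exp(G^{(i)}/G^{(i)\prime}) \le n$. Thus $\eps_0 \le n$. This is the only place a linear factor in $n$ enters, and it is sharp for $C_p$.

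The remaining point, which I expect to be the main obstacle, is bounding the derived length $L$ so that $4^{L-1} \ll (\log n)^6$, i.e.\ $L \le 3\log_2\log_2 n + O(1)$. The crude route is Glasby's bound $L < 3\log_2\log_2|G| + 9$, which with $\log_2|G| \le c\log_2 n + O(1)$ gives
\[
4^{L-1} \ll 4^{3\log_2\log_2|G|} = (\log_2|G|)^6 \ll (\log n)^6.
\]
Combining the three estimates gives $\diam(G) \ll n (\log n)^6 (\log n)^2 = n(\log n)^8$. If Glasby's constant turned out insufficient, I would fall back on the Zassenhaus--Newman bound $L \ll 1 + \log\log_2 n$ for soluble linear groups $H \le \GL_d(p)$ (used already in the proof of \Cref{cor:soluble-transitive}), plus one for $V$. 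However, that bound's implicit constant must be controlled to land exactly at exponent $8$, so the Glasby route via $|G| \le n^{O(1)}$ is the one I would commit to.
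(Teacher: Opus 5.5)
Your proposal is correct and is essentially the paper's proof. The paper likewise combines P\'alfy--Wolf ($\log_2|G| \ll \log n$), \Cref{lem:guralnick} ($\eps_0 \le n$) and \Cref{thm:main-soluble}, whose second form $\eps_0 (4\log_2|G|)^8$ is exactly your Glasby computation $4^{L-1} \le 4^8 (\log_2|G|)^6$; the affine-type observation and the Zassenhaus--Newman fallback are not needed.
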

\begin{proof}
    By the P\'alfy--Wolf theorem, $|G| \le n^{O(1)}$, so $\log_2 |G| \ll \log n$.
    Now apply \Cref{thm:main-soluble,lem:guralnick}.
\end{proof}

\section{Residually finite groups}

The arguments in this section are directly inspired by and reliant on Wilson's papers~\cites{wilson1,wilson2}.

The rough idea is the following.
If $G = \gen X$ is a residually finite group then $G$ is approximated by a sequence of finite quotients $G^* = G / N_i$, each of which has an induced generating set $X^*$, and \Cref{cor:theta-2-cor} applied to $G^* = \gen {X^*}$ gives
\[
    \diam(G^*) \le \eps (\log |G^*|)^{C \theta_2},
\]
where $C \ge 1$ is a constant, $\eps = \eps(G^*)$, and $\theta_2 = \theta_2(G^*)$.
This implies that there is an integer $m \le \eps (\log |G^*|)^{C\theta_2}$ such that
\[
    \gamma_X(m) \ge |G^*| \ge \exp((m / \eps)^{1/(C\theta_2)}).
\]
We would essentially be done at this point were it not for $\eps$, which is a nuisance.

We can avoid the dependence on the exponent parameter $\eps$ by arguing more circuitously.
In fact we will not apply our main diameter bounds directly but rather the results in \Cref{sec:anabelian-socle}.
We will argue that if $G$ has controlled growth then $G$ cannot have arbitrarily large nonabelian chief factors, and then deduce that $G$ is virtually residually soluble.
We will then use the following result of Wilson for residually soluble groups, which in turn depends on the deep result of Grigorchuk--Lubotzky--Mann~\cites{grig-residually-nilpotent,lubotzky--mann} on the residually nilpotent case (using group ring arguments and the Lazard analyticity criterion).

\begin{theorem}[Wilson~\cite{wilson2}*{Theorem~1}]
    \label[theorem]{thm:wilson}
    Let $G = \gen X$ be a residually soluble group such that
    \[\gamma_X(n) / \exp(n^{1/6}) \to 0\qquad \text{as}~n\to\infty.\]
    Then $G$ is virtually nilpotent.
\end{theorem}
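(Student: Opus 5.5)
Since this is Wilson's theorem, quoted from \cite{wilson2}, the plan below reconstructs a likely route rather than reproducing his argument. The strategy is to reduce to the residually nilpotent case, which is the theorem of Grigorchuk and Lubotzky--Mann~\cites{grig-residually-nilpotent,lubotzky--mann}. That theorem says a finitely generated residually nilpotent group (indeed residually-$p$ for finitely many primes suffices) with $\gamma_X(n) \prec \exp(n^{1/2})$ is virtually nilpotent. So the goal is to show that the growth hypothesis forces $G$ to have a finite-index subgroup that is residually nilpotent. I would control the \emph{Fitting height} of the finite soluble quotients $G^* = G/N$ uniformly, and then use residual solubility to pass to a finite-index subgroup.

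\emph{Step 1: a length bound per Fitting layer.} Let $G^* = \gen{X^*}$ be a finite soluble quotient with Fitting series $1 = F_0 < F_1 < \cdots < F_h = G^*$. I would show, in the spirit of \Cref{lem:wilson-commutator-argument} and \Cref{prop:monolithic-nonabelian-socle}, that each layer can be entered cheaply. Concretely, if $h$ is large then $B_{X^*}(r)$ contains elements realising a chain of $h$ layers, where $r$ grows only polynomially in $h$ and $\log|G^*|$. The mechanism is the same commutator trick. An element acting nontrivially on a self-centralising layer $F_{i}/F_{i-1}$ (self-centralising modulo $F_{i-1}$ since it is a Fitting quotient) can be commuted, via \Cref{lem:generalized-Milnor}, with conjugates of a short element one layer down to produce a nontrivial element one layer deeper. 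Each step costs a constant factor plus an additive $O(\log|G^*|)$.

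\emph{Step 2: growth forces bounded Fitting height.} The elements produced in Step 1 give many distinct words. Counting as in the sketch preceding this theorem, large Fitting height $h$ in some quotient would make $\gamma_X(n) \ge \exp(n^{c})$ along a sequence $n \to \infty$ for some $c$ related to $1/6$. The exponent $1/6$ should come from the cost per layer combined with the count of distinct elements obtained. This contradicts the hypothesis $\gamma_X(n)/\exp(n^{1/6}) \to 0$. Hence every finite soluble quotient of $G$ has Fitting height at most some $h_0$. The hypothesis is stated as $\gamma_X(n)/\exp(n^{1/6})\to 0$, rather than $\preceq$, precisely so that a single sequence of quotients suffices for the contradiction.

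\emph{Step 3: descent to a residually nilpotent subgroup.} Set $G_{h_0}$ to be the intersection of all normal subgroups $N$ with $G/N$ soluble of Fitting height at most $h_0$. This works for residually finite-soluble $G$; I would first check that residual solubility plus subexponential growth yields residual finite-solubility, using the soluble-quotient diameter bounds of \Cref{sec:soluble}. I would then induct on $h_0$. At each level the quotient by the Fitting layer involves only finitely many primes, because orders of chief factors are controlled by the growth. Any finitely generated group has only finitely many subgroups of a given index, so one can intersect to get a finite-index subgroup $H$ that is residually nilpotent, and indeed residually a finite $\pi$-group. Then Grigorchuk--Lubotzky--Mann applies to $H$, whose growth is equivalent to that of $G$, so $H$ and hence $G$ is virtually nilpotent.

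The main obstacle is Step 3. Bounded Fitting height alone does not immediately give a finite-index residually nilpotent subgroup, since the quotients $G/F_{h-1}$ need not have bounded order. I expect to need an extra argument, again via growth, that the top nilpotent layers have bounded exponent or a bounded set of primes. That makes the relevant abelian chief factors bounded modules over a finitely generated group, with only finitely many possibilities up to isomorphism, so their centralisers have finite index.
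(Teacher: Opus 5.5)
Your outer frame matches how the paper uses this result: it is only quoted, with the remark that it rests on the Grigorchuk--Lubotzky--Mann theorem for residually nilpotent groups. But the proposal leaves out the argument that makes the frame work, and the step you flag as the main obstacle is in fact the whole theorem.

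The quantity to bound is not the Fitting height. It is $|G : C_G(V)|$, uniformly over all chief factors $V$ of all finite soluble quotients $G/N$ (equivalently, $|G/N : F(G/N)|$). Given such a bound $a$, let $K$ be the intersection of the finitely many subgroups of index at most $a$. The Fitting subgroup of a finite group is the intersection of the centralizers of its chief factors, so $KN/N \le F(G/N)$ for every $N$. Hence $K$ is residually nilpotent, and Grigorchuk--Lubotzky--Mann applies because $1/6 < 1/2$. This is exactly the pattern of the paper's proof of \Cref{thm:gap-RF} for nonabelian chief factors.

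Bounded Fitting height neither gives this bound nor is a natural step toward it: every finite quotient of a metabelian group has Fitting height at most $2$, so Fitting height says nothing about how $G$ acts on its chief factors. Note also that by Milnor--Wolf each $G/G^{(d)}$ is already virtually nilpotent. This, rather than \Cref{sec:soluble}, is what gives residual finite-solubility. So the entire content is a bound on $|G/N : F(G/N)|$ that is uniform in the derived length of $G/N$.

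The repair you sketch in Step 3 is false: growth controls neither the orders, nor the exponents, nor the primes of chief factors. $\mathbb{Z}$ satisfies the hypothesis and has chief factors $C_p$ for every prime $p$. The group $\mathbb{Z}^2 \rtimes C_4$ (rotation action) has non-central chief factors $\F_p^2$ for every $p \equiv 3 \pmod 4$. Only the image of $G$ in $\Aut(V)$ stays bounded, and Lubotzky--Mann needs no restriction on the primes involved.

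Step 2 has no counting mechanism: one short element deep in the Fitting series does not make any ball large. To get $\gamma_X(m) \ge \exp(c\,m^{\beta})$ you must cover a large section by words of length polylogarithmic in its order. Abelian sections cost a factor of their exponent, which is exactly the $\eps$ nuisance discussed before \Cref{thm:wilson}. Large Fitting height, or large $|G/N|$, gives no lower bound on ball sizes, since the layers can be cyclic of huge order.

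Step 1 also has a flaw: the commutator trick of \Cref{lem:wilson-commutator-argument} needs minimal normal subgroups, not merely self-centralizing ones. A Fitting layer can contain a normal subgroup centralized by your element $x$, so $[x,y^h]$ may be trivial.

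What is missing is a quantitative statement that a large soluble group $G/C_G(V)$, acting faithfully and irreducibly on $V$, forces a large ball of polylogarithmic radius in a suitable quotient of $G$. That quantitative step, which is where an exponent such as $1/6$ arises, is the substance of the theorem, and the proposal supplies no version of it.
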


\begin{remark}
    The exponent $1/6$ was recently improved to $1/4 - o(1)$ in \cite{EM25}.
\end{remark}

Concretely, the extension to residually finite groups is based on the following observation.

\begin{lemma}
    \label[lemma]{lem:growth-in-monolithic}
    Let $G = \gen X$ be a group with a finite quotient $G / K$ such that $G / K$ is monolithic with nonabelian socle $N = T^n$, where $T$ is simple.
    Let \[0 < \beta \le (19 + 10 \theta_2(G/K))^{-1}.\]
    Then there is a constant $c = c(\beta)$ and an integer $m \ge 0$ such that
    \[
        \gamma_X(m) \ge |N| \ge \exp(c m^\beta).
    \]
\end{lemma}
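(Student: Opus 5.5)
The plan is to pass to the finite quotient $G^* = G/K = \gen{X^*}$, where $X^*$ is the image of $X$, and show that the whole socle $N$ lies in a ball $B_{X^*}(m)$ whose radius is polynomial in $\log|N|$. Since $B_X(m)$ maps onto $B_{X^*}(m)$, this gives
\[
    \gamma_X(m) \ge |B_{X^*}(m)| \ge |N|.
\]
The lower bound $|N| \ge \exp(c m^\beta)$ will then follow from the exponent bookkeeping. Throughout write $\theta_i = \theta_i(G^*)$, and recall $\theta_1 \le 2\theta_2$ and $\theta_2 \ge 1$.

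\emph{Step 1: a short element of the socle.} Since $G^*$ is monolithic with nonabelian socle $N \cong T^n$, \Cref{prop:monolithic-nonabelian-socle} gives a nontrivial $g \in N$ with
\[
    \len_{X^*}(g) \le 2 n^{16+5\theta_1}\log_2|T|.
\]
By monolithicity the normal closure of $g$ in $G^*$ is all of $N$. \Cref{lem:generalized-Milnor} then gives a generating set $Z$ of $N$ with
\[
    \len_{X^*}(Z) \le \len_{X^*}(g) + 2\log_2|N| \ll n^{16+5\theta_1}\log|T|.
\]

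\emph{Step 2: covering $N$.} Apply \Cref{thm:dona} to $N = T^n$. The rank $r$ of $T$ satisfies $r \ll \log|T|$, and by the definition of $\theta_2$ we have $\diam(T) \le (\log|T|)^{\theta_2}$. Hence
\[
    \diam(N) \ll n^3(\log|T|)^{1+\theta_2}.
\]
By the chain rule (\Cref{lem:chain-rule}),
\[
    N \subseteq B_{X^*}(m), \qquad m \ll n^{19+5\theta_1}(\log|T|)^{2+\theta_2} \le n^{19+10\theta_2}(\log|T|)^{19+10\theta_2} = (\log|N|)^{1/\beta_0},
\]
where $\beta_0 = (19+10\theta_2)^{-1}$ and we used $\log|N| = n\log|T|$ (up to the choice of logarithm base, which only affects constants).

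\emph{Step 3: conversion.} Say $m \le C(\log|N|)^{1/\beta_0}$ for an absolute constant $C$. Since $\beta \le \beta_0$ and $\log|N| \ge \log 60 > 1$,
\[
    m^\beta \le C^\beta(\log|N|)^{\beta/\beta_0} \le C^\beta\log|N|.
\]
Thus $|N| \ge \exp(c\,m^\beta)$ with $c = C^{-\beta}$, which depends only on $\beta$ because $C$ is absolute. Together with $\gamma_X(m) \ge |N|$ this proves the lemma.

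\emph{Main obstacle.} The only delicate point is the bookkeeping in Step 2. One must check that the exponent of $n$, namely $16+5\theta_1$ plus $3$, and the exponent of $\log|T|$, namely $2+\theta_2$ (one factor from Step 1, one from the rank, and $\theta_2$ from $\diam(T)$), are both at most $19+10\theta_2$. This uses $\theta_1 \le 2\theta_2$ and $\theta_2 \ge 1$. One must also check that every implied constant is absolute, so that $c$ depends only on $\beta$.
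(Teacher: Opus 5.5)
Your proposal is correct and follows the paper's proof essentially step for step. You get a short socle element from \Cref{prop:monolithic-nonabelian-socle} and a generating set of $N$ via \Cref{lem:generalized-Milnor}, then apply Dona's bound with $r \ll \log|T|$ and $\diam(T) \le (\log|T|)^{\theta_2}$, then the chain rule, and finally the conversion to $\gamma_X(m) \ge |N| \ge \exp(c m^\beta)$ with $c = C^{-\beta}$. The only differences are cosmetic: you work in $G/K$ explicitly instead of assuming $K = 1$, and you take $m = \len_{X^*}(N)$ with an auxiliary $\beta_0$ rather than $m = \floor{C(\log|N|)^{1/\beta}}$.
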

\begin{proof}
    There is no loss in assuming $K = 1$. Let $N = \soc(G) = T^n$.
    By \Cref{prop:monolithic-nonabelian-socle} and \Cref{lem:generalized-Milnor}, $N = \gen Z$ where
    \[
        \len_X(Z) \le 2n^{16 + 5 \theta_1(G)} \log_2 |T| + 2 n \log_2 |T|
        \ll n^{16 + 5 \theta_1(G)} \log |T|.
    \]
    By \Cref{thm:dona}, it follows that
    \begin{align*}
        \len_X(N) \le \len_X(Z) \diam(T^n)
        &\ll n^{19 + 5 \theta_1(G)} (\log |T|)^2 \diam(T) \\
        &\le n^{19 + 5 \theta_1(G)} (\log |T|)^{2 + \theta_2(G)} \\
        &\le (\log |N|)^{19 + 10 \theta_2(G)}\\
        &\le (\log |N|)^{1/\beta}.
    \end{align*}
    Let $C$ be a suitable constant such that $\len_X(N) \le C (\log |N|)^{1/\beta}$.
    Then the integer $m = \floor{C(\log |N|)^{1/\beta}}$ satisfies
    \[
        \gamma_X(m) \ge |N| \ge \exp(c m^\beta),
    \]
    where $c = C^{-\beta}$.
\end{proof}

\begin{lemma}
    \label[lemma]{lem:isolating-nonab-chief-factor}
    Suppose $G$ is a finite group with a nonabelian chief factor $N / K \cong T^n$. Then there is a monolithic quotient of $G$ with socle $T^n$.
\end{lemma}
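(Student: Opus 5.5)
The plan is to quotient by the centralizer of the chief factor. Since $N/K$ is a chief factor, it is a minimal normal subgroup of $\bar G = G/K$. Let $\bar N = N/K \cong T^n$, and let $C/K = C_{\bar G}(\bar N)$, which is normal in $\bar G$. We take the quotient $Q = G/C \cong \bar G / C_{\bar G}(\bar N)$ and claim that $Q$ is monolithic with socle isomorphic to $T^n$.

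The key steps, in order, are as follows.

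First, $\bar N \cap C_{\bar G}(\bar N) = Z(\bar N) = 1$, because $T$ is nonabelian simple. Hence $\bar N$ embeds in $Q$ as $\bar N C/C \cong \bar N \cong T^n$.

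Second, $\bar N C/C$ is minimal normal in $Q$. A $Q$-normal subgroup of $\bar N C/C$ pulls back, under the isomorphism with $\bar N$, to a $\bar G$-normal subgroup of $\bar N$. By minimality this subgroup is trivial or all of $\bar N$.

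Third, uniqueness. Let $M/C$ be any minimal normal subgroup of $Q$, and suppose $M/C \neq \bar N C/C$. Then $M/C \cap \bar N C/C = 1$ by minimality, so $[M/C, \bar N C/C] = 1$. Lifting to $\bar G$, we get $[M, \bar N] \le C \cap \bar N = 1$ (working modulo $K$ throughout). Thus $M \le C_{\bar G}(\bar N) = C$, so $M/C$ is trivial, a contradiction. Equivalently, $\bar N C/C$ is self-centralizing in $Q$, and a self-centralizing nonabelian minimal normal subgroup is the unique minimal normal subgroup, by the same commutator argument as in \Cref{lem:min-normal-nonabelian}. Therefore $Q$ is monolithic with socle $T^n$.

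The only point needing care is the third step: checking that the centralizer of $\bar N C/C$ in $Q$ is trivial. For this, if $gC$ centralizes $\bar N C/C$, then $[g, \bar N] \le C \cap \bar N = 1$, so $g \in C$. Everything else is routine.
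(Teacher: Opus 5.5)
Your proof is correct and follows the same route as the paper. Both reduce modulo $K$, quotient by the centralizer of the minimal normal subgroup $N/K$, and use the commutator argument: a normal subgroup $L$ not contained in the centralizer has $1\ne[L,N]\le L\cap N$, hence contains $N$. The paper folds your minimality and uniqueness steps into that single observation, and leaves implicit the fact $N\cap C_G(N)=Z(N)=1$, which you check explicitly.
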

\begin{proof}
    There is no loss in assuming $K = 1$.
    Then $N$ is a minimal normal subgroup of $G$.
    Now consider $G^* = G / C_G(N)$.
    Let $1 \ne L^* \nsgp G^*$ and let $L$ be the full preimage of $L^*$ in $G$.
    Since $L^* \ne 1$, $L$ is not contained in $C_G(N)$, so $1 \ne [L, N] \le L \cap N$.
    Since $N$ is minimal normal in $G$, $N \le L$, so $N^* \le L^*$.
    Thus $G^*$ is monolithic with socle $N^* \cong N$.
\end{proof}

Now we can prove that Babai's conjecture implies Grigorchuk's gap conjecture for residually finite groups.

\begin{proof}[Proof of \Cref{thm:gap-RF}]
    Given $\theta > 0$, let
    \[
        \beta_0 = 1/(19 + 10 \theta), \qquad \beta = 1 / (20 + 10 \theta).
    \]
    Now let $G = \gen X$ be a finitely generated group such that
    \begin{enumerate}
        \item $G$ has a sequence of finite-index normal subgroups $N_i$ such that $\bigcap_{i=1}^\infty N_i = 1$ and such that $\theta_2(G/N_i) \le \theta$ for all $i \ge 1$,
        \item $\gamma_X(n) \le C \exp(C n^\beta)$ for some constant $C > 0$ and all $n \ge 0$.
    \end{enumerate}

    Let $i \ge 1$ and suppose that $G / N_i$ has a nonabelian chief factor $T^n$.
    Then by \Cref{lem:growth-in-monolithic,lem:isolating-nonab-chief-factor} there is a constant $c_0 = c(\beta_0)$ and an integer $m \ge 0$ such that
    \[
        \gamma_X(m) \ge |T^n| \ge \exp(c_0 m^{\beta_0}).
    \]
    On the other hand $\gamma_X(m) \le C \exp(C m^\beta)$, so
    \[
        c_0 m^{\beta_0} \le C m^\beta + \log C,
    \]
    which implies that $m$ is $(\theta, C)$-bounded.
    Since
    \[
        |T^n| \le \gamma_X(m) \le C \exp(C m^\beta),
    \]
    it follows that $|T^n|$ is also $(\theta, C)$-bounded.
    Therefore $|\Aut(T^n)| \le a$ for some integer $a$ depending only on $(\theta, C)$.

    Let $H$ be the intersection of the subgroups of $G$ of index at most $a$. Note particularly that $a$ and $H$ are independent of $i$.

    Now again let $i \ge 1$. If $L / M \cong T^n$ is a nonabelian chief factor as above of $G^* = G / N_i$ then there is a natural map $G \to \Aut(T^n)$. Since $|\Aut(T^n)| \le a$, $H$ is contained in the kernel, so $H^*$ centralizes $T^n$.
    Let $G^* = G^*_0 > G^*_1 > \cdots > G^*_\ell = 1$ be a chief series of $G^*$. Then
    \[
        H^* = G^*_0 \cap H^* > G^*_1 \cap H^* > \cdots > G^*_\ell \cap H^* = 1
    \]
    is a normal abelian series for $H^*$.
    Indeed, each section $G^*_{i-1} \cap H^* / G^*_i \cap H^* \cong (G_{i-1}^* \cap H^*) G^*_i / G^*_i$ is $H^*$-isomorphic to a subgroup of $G^*_{i-1} / G^*_{i}$, which is either abelian or centralized by $H^*$.
    Thus $H^*$ is soluble.
    Since $i$ was arbitrary, it follows that $H$ is residually soluble.

    Finally, since $\beta < 1/6$, \Cref{thm:wilson} implies that $H$ is virtually nilpotent, and therefore so is $G$.
\end{proof}

\begin{proof}[Proof of \Cref{cor:branch-groups}]
    Let $d \ge 1$ and assume $G = \gen X \le \Aut(\T_d)$,
    where $\T_d$ is an infinite rooted $d$-regular tree.
    Let $\T_d^h$ denote the $d$-regular tree of height $h$.
    Let $N_h \nsgp G$ be the pointwise stabilizer of the vertices of $\T_d$ at distance $h$ from the root.
    Then
    \[
        G / N_h \le \Aut(\T_d^h) = \overbrace{S_d \wr \cdots \wr S_d}^h.
    \]
    If $T$ is a nonabelian composition factor of $G / N_h$ it follows that $T \hookrightarrow A_d$.
    Therefore $\theta_2(G / N_h)$ is bounded by some $\theta = \theta(d)$.
    Now let $\beta = \beta(\theta)$ be as in \Cref{thm:gap-RF}.
    Then $\gamma_X(n) \preceq \exp(n^\beta)$ implies that $G$ is virtually nilpotent.

    To give a more precise formula it suffices to bound
    \[
        \theta_2(T) = \frac{\log \diam(T)}{\log \log |T|}
    \]
    for nonabelian simple $T \le A_d$.
    Let $n = \mu(T)$. Then $5 \le n \le d$ and $T$ is a primitive simple subgroup of $S_n$.
    By \cite{maroti}*{Theorem~1.1}, either $|T| \ll n^{c \log n}$ or $T \cong A_m$ for some $m \le n$, in fact $m = n$ since $\mu(T) = n$.
    In the first case
    \[
        \theta_2(T) \le \log |T| \ll (\log n)^2.
    \]
    In the second case, by the Helfgott--Seress bound \eqref{eq:helfgott--seress},
    \[
        \theta_2(T) = \frac{\log \diam(A_n)}{\log \log (n!/2)} \ll (\log n)^3 \log \log n.
    \]
    Thus we may take $\theta(d) = c (\log d)^3 \log \log d$ for $d \ge 5$,
    where $c$ is some constant,
    and therefore we can take $\beta(d) = c' / ((\log d)^3 \log \log d)$ for some other constant $c' > 0$.
    Finally, $1 / (\log d)^4 < c' / ((\log d)^3 \log \log d)$ for $d$ sufficiently large.
\end{proof}

\begin{remark}
    Since growth functions are not totally ordered (trichotomy fails), there are multiple ways of formulating the gap conjecture.
    We are focusing on the weakest version.
    A stronger version of \Cref{conj:gap} (sometimes called $\Gap^*(\beta)$) predicts the existence of a constant $\beta > 0$ such that every non-virtually-nilpotent finitely generated group $G$ satisfies $\gamma_G(n) \succeq \exp(n^\beta)$ (rather than $\gamma_G(n) \not\prec \exp(n^\beta)$).
    An even stronger version is stated in \cite{EM25}*{Section~5}.
\end{remark}

\section{Examples}
\label[section]{sec:examples}

The form of the bound in \Cref{thm:main} suggests the intriguing possibility of precisely characterizing the structure of finite groups of polylogarithmic diameter, assuming the validity of Babai's conjecture (\Cref{conj:babai}).
For example, by \Cref{cor:anabelian}, anabelian is sufficient.
In general, to what extent is the dependence of \Cref{thm:main} on the exponent parameter
\[
\eps(G) = \max\{\exp(N / N') : N \nsgp G\}
\]
necessary? Can we say that only certain kinds of abelian sections are important?

In a similar spirit, Breuillard and Tointon~\cite{BT16}*{Section~4} showed that the \emph{polynomial} diameter condition $\diam(G) \ge |G|^\eps$ is equivalent to the presence of a cyclic section $H/N$ with index $|G:H| \ll_\eps 1$ and $|H : N| \gg_\eps |G|^\eps$.
If there is a similar condition that characterizes polylogarithmic diameter, it has to navigate the following examples.

Obviously cyclic groups have large diameter.
More generally, groups with large cyclic quotients have large diameter,
and dihedral groups are examples of groups with small abelianization but linear diameter due to a cyclic section.
More generally, if $H/N$ is a nontrivial $k$-generated abelian section of $G$ with $|G:H| \le k$ and $N \nsgp G$,
then $G/N$ is $(2k-1)$-generated by some set $X$,
there is a set $Y \subset H/N$ of Schreier generators, where $|Y| \le 2k^2$, and
\begin{equation}
    \label{eq:abelian-section-lower-bound}
    \diam(G) \ge \diam(G/N, X) \ge \diam(H/N, Y) \gg |H/N|^{1/(2k^2)}.
\end{equation}
In particular any large cyclic section $H/N$ with $N \nsgp G$ and $|G:H|$ small forces the diameter of $G$ to be large.

The presence of a large cyclic section $H/N$ may force the diameter of $G$ to be large even if the index $|G:H|$ is not small.
Consider the extension $G = V : A_n$, where $V \cong C_p^{n-1}$ is the standard deleted permutation module for $A_n$ over $\F_p$, $p \nmid n$.
Then \(\log |G| \sim n (\log p + \log n)\)
and Schreier's \Cref{lem:schreier} immediately gives $\diam(G) \ll pn \diam(A_n)$.
However, if $v = (1, -1, 0, \dots, 0) \in V$ then $X = \{v\} \cup A_n$ is a generating set for $G$ and it is easy to see that $\diam(G, X) \gg pn$.
Therefore
\[
    \max(pn, \diam(A_n)) \ll \diam(G) \ll p n \diam(A_n).
\]
It follows that $G$ has polylogarithmic worst-case diameter only if $p$ is very small, namely $p \le n^{O(1)}$ (and Babai's conjecture holds for $A_n$).
It is interesting that the best-case (or perhaps even typical-case) diameter remains small for much larger $p$:
the first and third authors~\cite{ES26} recently showed that as long as $p \le e^{O(n)}$ there is a bounded-cardinality generating set $X \subset G$ such that $\diam(G, X) \ll \log |G|$ (in fact, such that the Cayley graph is an expander graph).

On the other hand, not every large abelian section is harmful to the diameter.
A simple example is the wreath product $G = C_2 \wr C_n$, for which $\diam(G) \ll n^2$ by Schreier.
Another example is $G = \F_p^2 \rtimes \SL_2(p)$, for which $\diam(G) \ll (\log p)^{O(1)}$.
This follows from the following variant of \Cref{lem:schreier}\emph{(2)}.
Let $V \nsgp G$ be a normal abelian subgroup of a group $G$. The \emph{$G$-normal diameter} $\diam_G(V)$ of $V$ is defined to be the maximum value of $\diam(V, X)$ taken over $G$-invariant generating sets $X \subseteq V$. Then
\[
\diam(G) \ll \diam_G(V) \diam(G/V).
\]
Conversely, $\diam_G(V) \le \diam(G)$ if $V$ is complemented.

The previous examples may suggest that the critical thing is the maximal $G$-normal diameter of an abelian section, or possibly of a complemented abelian section, but neither is correct.
For example, let $p$ be an odd prime and consider an extension of the form $p^{1+2} : \SL_2(p)$ or $p^{1 + 4} : \SL_2(p)$ with centre $Z \cong C_p$.
In this case $Z$ is a large Frattini abelian subgroup of $G$, and $\diam_G(Z) = \diam(Z) \sim p/2$ since $Z$ is central, but one can check that $\diam(G)$ is polylogarithmic.
On the other hand, there are arbitrarily large Frattini abelian extensions of $A_5$ of the form $C_{2^n}^5 \cdot A_5$,
and $\diam(G) \ge |G|^c$ for any such group by \eqref{eq:abelian-section-lower-bound}
(the existence of these extensions follows from the theory of universal Frattini extensions: see Fried--Jarden~\cite{fried-jarden}*{Chapter~25} and \cite{fri95}*{\S{}II.C}).

Let us now turn away from diameters of abstract finite groups and focus on transitive permutation groups, which is our primary motivation.
It is not known whether any transitive subgroup of $S_n$ has greater than quadratic diameter.
In fact, apart from the giant cases $G = A_n$ and $G = S_n$ (and close relatives like $S_{n/2} \wr S_2$ etc),
it is generally difficult to construct transitive groups with diameter greater than linear.

One way of doing so is to consider the congruence quotients of the famous Grigorchuk group $G$. Let us briefly recall the construction. The interested reader might refer to \cite{dlH}*{Chapter~VIII} or \cite{mann}*{Chapter~10} or \cite{BGS-branch-groups-book}.
Let $\Omega = \{0,1\}^\omega$ be the set of all infinite binary strings.
We define involutions $a, b, c, d \in \Sym(\Omega)$ by the following recursive rules:
\begin{align*}
    &(0w)^a = 1w, && (1w)^a = 0w \\
    &(0w)^b = 0w^a, && (1w)^b = 1w^c, \\
    &(0w)^c = 0w^a, && (1w)^c = 1w^d, \\
    &(0w)^d = 0w, && (1w)^d = 1w^b.
\end{align*}
The \emph{first Grigorchuk group} is $G = \gen {a,b,c,d}$.
Note that $G$ may be identified with a subgroup of $\Aut(\T_2)$, where $\T_2$ is an infinite rooted binary tree,
so in particular $G$ is residually a finite $2$-group.
It turns out that $G$ is an infinite torsion group, and moreover $G$ has intermediate growth.
In fact it was proved by Bartholdi~\cite{bartholdi} that
\[
    \gamma_X(R) \preceq \exp(R^\beta),
\]
where $X = \{a, b, c, d\}$, $\beta = (1 - \log_2 \eta)^{-1} \approx 0.767$, and $\eta \approx 0.811$ is the real root of the polynomial $X^3 + X^2 + X - 2$.
Recently, a nearly matching lower bound was proved by Erschler and Zheng~\cite{erschler--zheng}:
\[
    \gamma_X(R) \succeq \exp(R^{\beta - o(1)}).
\]

By pruning the tree we obtain the \emph{congruence quotient} $G_h \le \Sym(\T_2^h)$, which is a $2$-group acting faithfully and transitively on $n = 2^h$ points.
Let $X_h$ be the image of $X$ in $G_h$.
By \cite{dlH}*{VIII.D.41},
\[
    |G_h| = 2^{5n/8 + 2} \qquad (h \ge 3).
\]
From Bartholdi's bound on the growth function it follows that
\[
    \diam(G_h, X_h) \gg n^{1/\beta}, \qquad 1 / \beta \approx 1.303.
\]
Unfortunately, we do not get a nearly matching upper bound on $\diam(G_h)$ from Erschler--Zheng, as it may be that a disproportionate number of the elements in the ball $B_X(R)$ are contained in the kernel of the map $G \to G_h$.
However, at least from \cite{bartholdi-lower}*{Proposition~7} we have
\[
    \diam(G_h, X_h) \ll n^{1.939}
\]
and, moreover, by \Cref{rem:bartholdi--bradford} the worst-case diameter is
\[
    \diam(G_h) \ll n^2.
\]

Variations on the Grigorchuk group have been the subject of intense study for several decades.
For a particularly vast generalization, we refer to the concept of a \emph{spinal group} as defined by Bartholdi--Grigorchuk--\v{S}uni\'{k}~\cite{BGS-branch-groups-book}.
The defining data consists of two groups, $A$ and $B$ (typically finite), an integer sequence $(d_i)_{i \ge 1}$ with $d_i \ge 2$ for each $i$, and a sequence of permutation representations
\[
    \alpha_1 : A \to \Sym(d_1),
    \qquad
    \alpha_{i,j} : B \to \Sym(d_i) \quad (i \ge 2, 1 \le j < d_{i-1}).
\]
Define
\[
    A_1 = \alpha_1(A),
    \qquad
    A_i = \gen{\alpha_{i,j}(B) : 1 \le j < d_{i-1}} \quad (i \ge 2).
\]
Let $\T$ be an infinite spherically homogeneous rooted tree with degree sequence $(d_i)_{i \ge 1}$.
We may identify the vertices of $\T$ with the set $\Omega$ of all finite strings $\omega = \omega_1 \cdots \omega_h$ with $1 \le \omega_i \le d_i$ for each $i$,
and we may identify $\Aut(\T)$ with the subgroup of $\Sym(\Omega)$ stabilizing the prefix relation.
We define actions $A, B \to \Aut(\T)$ as follows.
The action of $A$ on $\T$ is the obvious \emph{rooted action} defined by
\[
    (\omega_1 \cdots \omega_h)^a = \omega_1^{\alpha_1(a)}\omega_2 \cdots \omega_h \qquad (a \in A).
\]
The action of $B$ on $\T$ is the \emph{directed action} defined by the rule
\begin{align*}
    (d_1 \cdots d_{i-2} j \omega_i \cdots \omega_h)^b =
        d_1 \cdots d_{i-2} j \omega_i^{\alpha_{i,j}(b)} \omega_{i+1} \cdots \omega_h \\
        \hfill(b \in B, i \ge 2, 1 \le j < d_{i-1}),
\end{align*}
and $\omega^b = \omega$ if $\omega = d_1 \cdots d_{h-1} j$.
We assume both actions are faithful and we identify $A, B$ with their images in $\Aut(\T)$.
The group $G = \gen{A,B}$ is called a \emph{spinal group} if
\begin{enumerate}
    \item (spherical transitivity) $A_i \le \Sym(d_i)$ is transitive for each $i \ge 1$,
    \item (strong faithfulness) $\bigcap_{i \ge k} \bigcap_{j=1}^{d_{i-1} - 1} \ker \alpha_{i,j} = 1$ for all $k \ge 2$.
\end{enumerate}
%The ray $d_1d_2d_3\cdots$ is the \emph{spine} of the tree.
The subgroups $A$ and $B$ are the \emph{rooted part} and the \emph{directed part} of $G$.

Now assume $\alpha_{i,j}$ is trivial whenever $i \ge 2$ and $j \ne 1$, and let $\alpha_i = \alpha_{i,1}$ for $i \ge 2$.
We say $G$ is of \emph{type $\G$} if this holds and additionally
\begin{enumerate}[resume]
   \item (covering condition) $\bigcup_{i \ge j} \ker \alpha_i = B$ for all $j \ge 2$,
\end{enumerate}
and, more strongly, of \emph{type $\G_r$} if
\begin{enumerate}[resume]
    \item[($3_r$)] ($r$-uniform covering condition) $\bigcup_{i = j}^{j + r - 1} \ker \alpha_i = B$ for all $j \ge 2$.
\end{enumerate}
For example, the first Grigorchuk group is a spinal group of type $\G_3$ with
\[
    A = C_2, \qquad B = C_2 \times C_2,
\]
and a $3$-periodic sequence of homomorphisms $\alpha_i : B \to \Sym(2)$.

By \cite{BGS-branch-groups-book}*{Theorem~10.5--10.8}, every spinal group of type $\G$ with finite directed part has intermediate growth, and moreover the growth function satisfies
\[
    \gamma_G(n) \succeq \exp(n^{\beta_1}), \qquad \beta_1 = (1 + \log_m(2))^{-1}, ~ m = \min d_i.
\]
Furthermore, by \cite{BGS-branch-groups-book}*{Theorem~10.10}, if $G$ has type $\G_r$ then
\[
    \gamma_G(n) \preceq \exp(n^{\beta_2}), \qquad \beta_2 = (1 + \log_M(1/\eta_r))^{-1}, ~ M = \max d_i,
\]
where $\eta_r$ is the positive root of $X^r + X^{r-1} + X^{r-2} - 2$.

Consider an illustrative example. Take
\[
    A = C_2, \qquad B = C_2 \times C_2 \times \Alt(5).
\]
Let $(d_i)_{\ge 1}$ be a sequence in $\{2, 5\}$ such that $d_1 = 2$, $d_i = 2$ infinitely often, $d_i = 5$ infinitely often, and
\[
    d_i = 5 \implies d_{i+1} = d_{i+2} = d_{i+3} = 2 \qquad (i \ge 2).
\]
If $d_i = 2$ then there are three choices for
\[\alpha_i : B \to C_2 \times C_2 \to \Sym(2),\]
which we take in a $3$-periodic fashion.
If $d_i = 5$ then we take the natural map
\[\alpha_i : B \to \Alt(5) \to \Sym(5).\]
The resulting spinal group is of type $\G_4$, so the growth function of $G$ satisfies
\[
    \exp(n^{0.5}) \preceq \gamma_G(n) \preceq \exp(n^{0.922}).
\]
Now let $G_h$ be the quotient of $G$ by the level-$h$ stabilizer subgroup,
and let $X_h$ be the image of the generating set $X = A \cup B$ in $G_h$.
Then $G_h$ is a transitive subgroup of $\Sym(n)$ where $n = d_1 \cdots d_h$, $2^h \le n \le 5^h$,
every composition factor of $G_h$ is isomorphic either to $C_2$ or to $\Alt(5)$,
and the number of $\Alt(5)$ factors tends to infinity as $h \to \infty$.
One can show using spinal group arguments as in \cite{BGS-branch-groups-book} that the order of $G_h$ is exponential in $n$.
Finally, from the slow growth of $G$ it follows that
\[
    \diam(G_h, X_h) \gg n^{1.084}.
\]
In the other direction, at least it follows from \Cref{cor:transitive} and the fact that $\diam(A_5) = 10$ that
\[
    \diam(G_h) \ll n^{43}.
\]
% theta_1 ~= 1.43

Since $G$ is not virtually residually soluble, it is a counterexample to a conjecture of Sarah Black~\cite{black}*{Conjecture~$2'$} that any finitely generated residually finite group of subexponential growth is virtually residually soluble.
However, it is abundantly clear that the slow growth in $G$ is directly linked to the $C_2$ factors,
while the $A_5$ factors are only incidental.
The groups of Kassabov--Pak~\cite{kassabov-pak} are similar.
In fact we do not know examples of residually finite-anabelian groups of intermediate growth.
The following related question was recently posed by the first author in the Kourovka notebook~\cite{kourovka}*{21.44}.

\begin{question}
    Let $G$ be a finitely generated dense subgroup of the infinite iterated wreath product $W = \cdots \wr A_5 \wr A_5 \wr A_5$.
    Can $G$ have subexponential growth?
\end{question}

\Cref{cor:branch-groups} implies that there is a constant $\beta > 0$ such that no such group $G$ has growth $\gamma_X(n) \preceq \exp(n^\beta)$.
We may take $\beta = 1 / 40$.
% Why? diam(A_5) = 10, so theta_2 ~= 1.64 and we can take beta = 1/(20 + 10 \theta_2).

\bibliography{refs}
\end{document}